\providecommand{\noopsort}[1]{}}
\newcommand{\bea}{\begin{eqnarray}}
\newcommand{\eea}{\end{eqnarray}}
\newcommand{\be}{\begin{equation}}
\newcommand{\ee}{\end{equation}}
\newcommand{\beann}{\begin{eqnarray*}}
\newcommand{\eeann}{\end{eqnarray*}}
\newcommand{\bal}{\begin{align}}
\newcommand{\eal}{\end{align}}
\newcommand{\balnn}{\begin{align*}}
\newcommand{\ealnn}{\end{align*}}
\newcommand{\nn}{\nonumber}
\newcommand{\y}{{\bf y}}
\newcommand{\z}{{\bf z}}
\def\R{{\mathbb R}}
\def\ba{\begin{array}}
\def\ea{\end{array}}
\def\bd{\begin{displaymath}}
\def\ed{\end{displaymath}}
\def\Y{{\mathbb Y}}
\def\P{{\mathbb P}}
\def\1{{\mathbf 1}}
\def\A{{\mathbb A}}
\def\E{{\mathbb E}}
\def\F{{\mathbb F}}
\def\M{{\mathbb M}}
\def\N{{\mathbb N}}
\def\G{{\mathbf G}}
\def\X{{\mathbb X}}
\def\G{{\mathbb G}}
\def\T{{\mathbb T}}
\def\TT{{\mathcal T}}
\def\Z{{\mathbb Z}}
\def\B{{\mathcal B}}
\DeclareMathOperator{\e}{e}
\DeclareMathOperator{\Cov}{Cov}
\DeclareMathOperator{\Var}{Var}
\DeclareMathOperator{\argmin}{arg \, min}
\DeclareMathOperator{\supp}{supp}
\newtheorem{thm}{Theorem}
\newtheorem{cor}{Corollary}
\newtheorem{lemma}{Lemma}
\newtheorem{prop}{Proposition}
\newtheorem{rem}{Remark}
\newtheorem{definition}{Definition}
\title{Spatio-temporal c\`adl\`ag functional marked point processes: Unifying spatio-temporal frameworks}
\date{}
\author{Ottmar Cronie%
  \thanks{e-mail: \texttt{ottmar@cwi.nl, ottmar@alumni.chalmers.se} (corresponding author)}}
\affil{
Stochastics research group, CWI\footnote{National Research Institute for Mathematics \& Computer Science}, 
P.O. Box 94079, 
1090 GB Amsterdam, 
The Netherlands}
\author{Jorge Mateu
\thanks{e-mail: \texttt{mateu@mat.uji.es}}
}
\affil{Department of Mathematics, 
Universitat Jaume I, 
Campus Riu Sec, 
12071 Castell\'on, 
Spain}
\begin{document}
\newcommand{\edt}[1]{{\vbox{ \hbox{#1} \vskip-0.3em \hrule}}}

\maketitle

\begin{abstract}

This paper defines the class of c\`adl\`ag functional marked point processes (CFMPPs). These are (spatio-temporal) point processes marked by random elements which take values in a c\`adl\`ag function space, i.e.\ the marks are given by c\`adl\`ag stochastic processes. 
We generalise notions of marked (spatio-temporal) point processes and indicate how this class, in a sensible way, connects the point process framework with the random fields framework. We also show how they can be used to construct a class of spatio-temporal Boolean models, how to construct different classes of these models by choosing specific mark functions, and how c\`adl\`ag functional marked Cox processes have a double connection to random fields. 
We also discuss finite CFMPPs, purely temporally well-defined CFMPPs and Markov CFMPPs.
Furthermore, we define characteristics such as product densities, Palm distributions and conditional intensities, in order to develop statistical inference tools such as likelihood estimation schemes. 
%and inhomogeneous spatio-temporal marked $K$-functions. 

\end{abstract}

\noindent {\bf Key words}: 
Boolean model, 
C\`adl\`ag stochastic process, 
Conditional intensity, 
Discrete sampling, 
Geostatistics with random sampling locations, 
%Inhomogeneous spatio-temporal marked $K$-function, 
Intensity functional,
LISTA function, 
Marked reduced Palm measure, 
Markov process, 
Maximum (pseudo)likelihood, 
Pair correlation functional,
Papangelou conditional intensity,
Product density,
Random field, 
Spatio-temporal functional marked point process, 
Spatio-temporal geostatistical marking, 
Spatio-temporal intensity dependent marks, 
Wiener measure

\section{Introduction}\label{SectionIntroduction}

Point processes \Citep{CoxIsham1980, DVJ1, DVJ2, Karr1991,VanLieshout, Moller,SKM}, which may be treated as random collections of points falling in some measurable space, have found use in describing an increasing number of naturally arising phenomena, in a wide variety of applications, including epidemiology, ecology, forestry, mining, hydrology, astronomy, ecology, and meteorology
\Citep{CoxIsham1980, DVJ1, Karr1991, Moller, Ripley1981, SchoenbergTranbarger2008, Schoenberg2011, TranbargerSchoenberg2010}.

Point processes evolved naturally from renewal theory and the statistical analysis of life tables, dating back to the 17th century, and in the earliest applications each point represented the occurrence time of an event, such as a death or an incidence of disease (see e.g. \Citep[Chapter 1]{DVJ1} for a review). In the mid-20th century interest expanded to spatial point processes, where each point represents the location of some object or event, such as a tree or a sighting of a species \Citep{Cressie1993, Diggle2003, Ripley1981, SKM}. More recent volumes have a strong emphasis on spatial processes and address mathematical theory \Citep{DVJ2, Handbook,VanLieshout,SchneiderWeil}, methodology of statistical inference \Citep{VanLieshout,Moller}, and data analysis in a range of applied fields \Citep{Diggle2003,Ripley1981,BaddeleyEtAl, Illian}, although the distinction
between these three areas is far from absolute and there are substantial overlaps in coverage
between the cited references.

The classical model for temporal or spatial point processes is the Poisson process, where the number of points in disjoint sets are independent Poisson distributed random variables. Alternative models for spatial point processes
(\Citep[Chapter 8]{Cressie1993} or \Citep{Moller}) grew quite intricate over the course of the 20th century, and among the names associated with these models are some of the key names in the history of statistics, including Jerzy Neyman and David Cox. Today, much attention is paid to spatio-temporal point processes, where each point represents the time and location of an event, such as the origin of an earthquake or wildfire, a lightning strike, or an incidence of a particular disease \Citep{TranbargerSchoenberg2010,VereJones}. 

The intimate relationship between point processes and time series is worth noting. Indeed, many data sets that are traditionally viewed as realisations of point processes could in principle also be regarded as time series, and vice versa \Citep{CoxIsham1980,TranbargerSchoenberg2010}. For instance, a sequence of earthquake origin times is typically viewed as a temporal point process, though one could also store such a sequence as a time series consisting of zeros and ones, with the ones representing earthquakes. The main difference is that for a point process, a point can occur at any time in a continuum, whereas for time series, the time intervals are discretised. In addition, if the points are sufficiently sparse, one can see that it may be far more practical to store and analyse the data as a point process, rather than dealing with a long list containing mostly zeros. By the mid 1990s, models for spatial-temporal point processes had become plentiful and often quite intricate.

A probabilistic view of spatio-temporal processes, in principle, can just regard time as one more
coordinate and, hence, a special case of a higher-dimensional spatial approach. Of course,
this is not appropriate for dynamic spatially referenced processes, as time has a different character than space. There has been a lot of recent work on spatio-temporal models, and a variety of ad hoc approaches have been suggested.
Processes that are both spatially and temporally discrete are more naturally considered
as binary-valued random fields. Processes that are temporally
discrete with only a small number of distinct event-times can be considered initially as
multivariate point processes, but with the qualification that the
temporal structure of the type-label may help the interpretation of any inter-relationships
among the component patterns. Conversely, spatially discrete processes with only a small
number of distinct event-locations can be considered as multivariate temporal point processes,
but with a spatial interpretation to the component processes. The other more common end is considering processes that are temporally continuous and either spatially continuous
or spatially discrete on a sufficiently large support to justify formulating explicitly
spatio-temporal models for the data.

A marked point pattern is one in which each point of the process carries
extra information called a mark, which may be a random variable, several random variables,
a geometrical shape, or some other information. A multivariate or multi-type point pattern is the special
case where the mark is a categorical variable. Marked point patterns with nonnegative real-valued marks are also of interest. A spatial pattern of geometrical objects, such as disks or polygons of different sizes
and shapes, can be treated as a marked point process where the points are the centres of
the objects, and the marks are parameters determining the size and shape of the objects \Citep{RipleySutherland,StoyanStoyan}.

Marked point patterns raise new and interesting questions concerning the appropriate way
to formulate models and pursue analyses for particular applications. In the analysis of a marked point pattern, an important choice is whether to analyse the marks and locations jointly or conditionally. Schematically, if we write $X$ for the points and $M$ for the marks, then we could specify a model for the marked point process
$[X, M]$. Alternatively we may condition on the locations of the points, treating only the marks
as random variables $[M|X]$. In some cases, we may condition on the marks, treating the locations as a random point process $[X|M]$. This is meaningful if the mark variable is a continuous real-valued quantity,
such as time, age or distance. The concept of marking refers to methods of constructing marked point processes
from unmarked ones. Two special cases, independent and geostatistical markings, are among
the known simple examples of marking strategies and are often used in practice. However,
these markings are not able to model density-dependence of marks, the case where the local
point intensity affects the mark distribution.

One important situation is where the marks are provided by a (random) field -- geostatistical/random field marking. A random
field is a quantity $Z(u)$ observable at any spatial location $u$. A typical question is to determine whether $X$ and $Z$ are independent. If $X$ and $Z$ are independent,
then we may condition on the locations and use geostatistical techniques to investigate
properties of $Z$. However, in general, geostatistical techniques, such as the variogram, have
a different interpretation when applied to marked point patterns. In this context, the analysis of dependence between marks and locations is of interest. \Citep{Schlather2004} defined the conditional mean
and conditional variance of the mark attached to a typical random point, given that there
exists another random point at a distance $r$ away from it. These functions may serve as diagnostics for dependence between the points and the marks. Another way to generate non-Poisson marked point processes is to apply dependent thinning to a Poisson marked point process. Interesting examples occur when the thinning rule
depends on both the location and the mark of each point.

Despite the relatively long history of point process theory, few approaches have been considered to analyse
spatial point patterns where the features of interest are functions (i.e. curves) instead of qualitative
or quantitative variables. 
For instance, an explicit example is given by the \emph{growth-interaction process} \citep{Comas,MateuFMPP,Cronie,CronieSarkka,CronieForest,RenshawComas,RenshawComasMateu,RS1,RS2}, which has been used to model the collective development of tree locations and diameters in forest stands. 
Moreover, \Citep{Illian2006} consider for each point a transformed
Ripley's $K$-function to characterise spatial point patterns of ecological plant communities, whilst
\Citep{Mateu2008} build new marked point processes formed by spatial locations and curves defined
in terms of LISA functions, which define local characteristics of the point pattern. They use this
approach to classify and discriminate between points belonging to a clutter and those belonging to
a feature.
The study of such configurations permits to analyse the effects of the spatial structure on individual functions. For instance, the analysis of point patterns where the associated curves depend on time may permit the study of spatio-temporal interdependencies of such dynamic processes.

Functional data analysis describes and models data based on curves \Citep{RamsaySilverman2002,RamsaySilverman2005}. 
This theory considers each curve as an observation rather than a set of numbers \Citep{RamsaySilverman2002}. Therefore, functional data analysis together with point process theory provides
the theoretical framework to analyse point patterns with associated curves. The use of functional
data analysis has already been considered to analyse geostatistical data involving functions instead
of single observations. For instance, \Citep{DelicadoGiraldoComas,GiraldoDelicadoMateu2010,GiraldoDelicadoMateu2011} develop new geostatistical tools to
predict unobserved curves representing daily temperature throughout a year, and analyse a data
set consisting of daily meteorological measurements recorded at several weather stations of Canada.
However, the use of functional tools in point pattern analysis is limited to just a few references and
none of them provides new second order characteristics.

It is clear that there is a wealth of approaches in the theory of spatial point processes. 
However, the large number of derived  spatial point process approaches and methods reduces significantly 
when handling a spatio-temporal structure in combination with such associated marks.
Our aim here is to propose a new class of (spatio-temporal) functional marked point processes, where the marks are random elements which take values in a c\`adl\`ag function space. 
The reason for this choice of function class is its generality and flexibility, and thus its ability to accommodate a variety of different models and structures. 
With this new setup, we generalise most of the usual notions of (spatio-temporal) marked point processes, hence providing a unifying framework. 
In addition, we indicate how this framework in a natural way unifies the frameworks of marked point processes and random fields, and we indicate a geometrical interpretation which connects this framework with (spatio-temporal) Boolean models. 
We develop characteristics such as product densities, Palm distributions and (Papangelou) 
conditional intensities, as these play a significant role in both theoretical as well as practical aspects of point process analysis. 
We also we discuss different explicit marking structures and give a thorough description of the statistical framework when the marks are sampled discretely.

The paper is structured as follows. 
Section \ref{MainSectionSTCFMPP} presents the new class of c\`adl\`ag functional marked point processes, both in its spatial and spatio-temporal versions. Here also some geometric interpretations are discussed. Some motivating examples and connections with other spatio-temporal frameworks are given in Section \ref{SectionExamples}. Section \ref{SectionPointProcessCharacteristics} develops certain point process characteristics, such as product densities and Papangelou conditional intensities, which are needed for the development of the statistical theory underlying these processes. 
Section \ref{SectionMarkStructures} discusses certain specific marking structures, which may be considered within this framework. 
The point process characteristics are particularised to Poisson, Cox, temporally well-defined, finite and Markov c\`adl\`ag functional marked point processes in Section \ref{SectionClassesSTCFMPP}. 
%Section \ref{SectionKfunction} defines an inhomogeneous spatio-temporal $K$-function for (functional) marked point processes and 
Finally, in Section \ref{SectionDiscretelySampledMarks}, the scenario where the functional marks are sampled discretely is covered. 
%The paper ends with some final remarks and a discussion.

\section{C\`adl\`ag functional marked point processes}\label{MainSectionSTCFMPP}

We here describe the construction of two types of point processes, where the second type is a spatio-temporal version of the first type. Heuristically, the first type may be described as a collection $\Psi=\{(X_i,(L_i,M_i))\}_{i=1}^N$ of Euclidean spatial locations $X_i$ with associated function-valued marks $M_i$ 
and auxiliary marks $L_i$, i.e.\ random parameters/variables with the purpose of controlling $M_i$. 
For the second type we further add a random temporal event $T_i$ to each point of $\Psi$ so that the point process $\Psi$ may be described as the collection $\Psi=\{((X_i,T_i),(L_i,M_i))\}_{i=1}^N$. 
Note that the extra parentheses here are meant to emphasise which part is the space-time location and which part is the mark.

We start by defining the two product spaces on which these two types of point processes are defined. 
We then continue to define the two types of point processes $\Psi$ as random measures on these two spaces.

\subsection{Notation}

Let the underlying probability space be denoted by $(\Omega,\mathcal{F},\P)$. Due to the inherent temporally evolving nature of the functional marks and/or the spatio-temporal point process part, at times we will further consider some filtration $\mathcal{F}_{\T}$ and thus obtain a filtered probability space $(\Omega,\mathcal{F},\mathcal{F}_{\T},\P)$. 
We let $\Z_+=\{1,2,\ldots\}$ and $\N=\{0\}\cup\Z_+$, and 
let $\mathcal{P}_N$ denote the power set of $\{1,\ldots,N\}$. 

For any $x,y$ in $d$-dimensional Euclidean space $\R^d$, $d\geq1$, we denote the Euclidean norm by $\|x\|=(\sum_{i=1}^{d}x_i^2 )^{1/2}$ (or sometimes $|x|$) and the Euclidean metric by $d_{\R^d}(x,y)=\|x-y\|$. 
Given some topological space $\mathcal{X}$, we will call $\mathcal{X}$ a \emph{csm} space if it is a complete separable metric space, and as usual we will denote the Borel sets of $\mathcal{X}$ by $\B(\mathcal{X})$. 
Given Borel $\sigma$-algebras $\B(\mathcal{X}_i)$, $i=1,\ldots,n$, we denote the product $\sigma$-algebra by $\bigotimes_{i=1}^{n}\B(\mathcal{X}_i)$ and by $\B(\mathcal{X})^n$ if $\mathcal{X}_i=\mathcal{X}$, $i=1,\ldots,n$. 
For measures $\nu_i(\cdot)$ defined on $\B(\mathcal{X}_i)$, $i=1,\ldots,n$, we write $\bigotimes_{i=1}^n\nu_i(\cdot)$ for the product measure and we write $\nu^n$ if the measure spaces are identical. 
We will denote Lebesgue measure on $(\R^d,\B(\R^d))$ by $\ell$ and use both $\int_{G}f(x)\ell(dx)$ and $\int_{G}f(x)dx$ interchangeably to denote the integral of some measurable function $f:\R^d\rightarrow\R$, with respect to $\ell$ and $G\subseteq\R^d$. 
When we need to emphasise the dimension of the space on which we apply $\ell$, we write e.g.\ $\ell_d$ to denote Lebesgue measure on $\R^d$.

For any set $A$, we let $\1_{A}(a)=\1\{a\in A\}$ denote the indicator function of $A$ and $|A|$ will denote the related cardinality (it will be clear from context whether we consider the norm or the cardinality). 
Given some measurable space $\Y$, we let $\delta_{y}(\cdot)$ denote the Dirac measure of the measurable singleton $\{y\}\subseteq\Y$ and sometimes this notation will also be used for Dirac deltas. 
As usual, a.s.\ will be short for \emph{almost surely} and a.e.\ will be used for \emph{almost everywhere}.

Throughout, by a \emph{kernel} we understand a family $\mu=\{\mu(x,A):x\in\mathcal{X}, A\in\mathcal{F}\}$ such that, for a fixed $x\in\mathcal{X}$, $\mu(x,\cdot)$ is a measure on some $\sigma$-algebra $\mathcal{F}$ and $\mu(\cdot,A)$ is a measurable function for a fixed $A\in\mathcal{F}$. When $\mu$ is a kernel such that each $\mu(x,\cdot)$ is a probability measure on $\mathcal{F}$, we call $\mu$ a family of \emph{regular (conditional) probabilities}.

\subsection{The state spaces}
The spaces $\X$, $\T$, $\A$ and $\F$ below will be used as underlying spaces in the construction of (spatio-temporal) c\`adl\`ag functional marked point processes. 
For instance, as we shall see, a spatio-temporal c\`adl\`ag functional marked point process will be defined as a marked point process with ground space $\X\times\T$ and mark space $\A\times\F$.

\subsubsection{The spatial ground space}
Turning now to the purely spatial domain, throughout we will assume that it is given by some  
subset $\X\subseteq\R^d$, $d\geq1$, with Borel sets $\B(\X)$. 
Hereby, when we construct our point processes, 
each point of the point process will have some spatial location $x\in\X$. 
We note that the most common assumption here is that $\X=\R^d$. 
However, if $\X\neq\R^d$ or $\X\neq\pm[0,\infty)^d$ we will require that $\X$ is compact in order to make it csm (note that this includes the case of identifying the sides of a (hyper)rectangle in order to construct a torus). When this is the case $\Psi$ becomes a finite point process.

\subsubsection{The temporal ground space}
In the case of spatio-temporal point processes we also consider the temporal interval domain $\T\subseteq\R$, which contains a point's (main) temporal occurrence/event time $t\in \T$ (in some applications $t$ symbolises e.g.\ a birth/arrival time). Note that $\T\in\B(\T)\subseteq\B(\R)$ and usually $\T = [0,T^*]\subseteq\{0\}\cup\R_+ = [0,\infty)$.

\subsubsection{The auxiliary mark space}\label{SectionAuxiliaryMarks}
Being marked point process models, at times we need to connect some auxiliary variable to each point of the process. 
Such auxiliary information may possibly represent one of the following things.
\begin{enumerate}
\item A classification of type: Let 
$\A=\A_d=\{1,\ldots,k_{\A}\}$, $k_{\A}\in\Z_+$, i.e.\ each auxiliary mark will be of discrete type. 
Note here that the resulting (spatio-temporal) point process models will be of \emph{multivariate} type \Citep{DVJ1,VanLieshout}. The metric chosen is $d_{\A}(l_1,l_2)=|l_1-l_2|$, $l_1,l_2\in\A$ and the Borel sets are given by $\mathcal{P}_{k_{\A}}$. 

\item 
Continuous auxiliary information: 
Let $\A=\A_c\subseteq\R^{m_{\A}}$ for some $m_{\A}\in\Z_+$ (usually $\A=[0,\infty)$). 
This corresponds to e.g.\ some additional temporal information, such as a \emph{lifetime}, which possibly controls the behaviour of the functional mark. 
Here the metric $d_{\A}(\cdot,\cdot)$ on $\A$ will be given by the Euclidean metric $\|\cdot\|$. 

\item 
The combination of the above: Let $\A=\A_d\times\A_c$, with the metric 
$d_{\A}(l_1,l_2)=\|l_{12}-l_{22}\| + |l_{11}-l_{21}|$, $(l_1,l_2)=((l_{11},l_{12}),(l_{21},l_{22}))\in\A^2$. 
Note that case 2 may be considered superfluous since we here simply may let $k_{\A}=1$, whereby each auxiliary mark will take values in $\{1\}\times\A_c$. 

\end{enumerate}
Note that under each of the proposed metrics, the corresponding space becomes a csm space and we denote the Borel sets by $\B(\A)$.

\subsubsection{The functional mark space}
In a functional marked point process, a (functional) mark may represent an array of things, 
ranging from e.g.\ some feature's growth over time to some function describing spatial dependence. 
In order to accommodate a large range of models and applications, we choose to allow for the functional marks to take values in a Skorohod space (see e.g.\ \Citep{Billingsley,EthierKurtz,JacodShiryaev,Silvestrov}).

More specifically, 
consider some $\TT\subseteq[0,\infty)$, with $T^*=\sup \TT$ (with $T^*=\infty$ if $\TT=[0,\infty)$), and 
consider the function space
\beann
\F=D_{\TT}(\R) = \{f:\TT\rightarrow\R | f\text{ c\`adl\`ag}\},
\eeann
which is the set of c\`adl\`ag (right continuous with existing left limits) functions $f:\TT\rightarrow(\R,d_{\R}(\cdot,\cdot))$ (see e.g.\ \Citep{Billingsley}). 
Consider now the collection $\Lambda$ of all strictly increasing, surjective and Lipschitz continuous functions $\lambda:\TT\rightarrow\TT$, $\lambda(0)=0$, $\lim_{t\rightarrow\infty}\lambda(t)=T^*$, such that
$$
\gamma(\lambda) = \sup_{s,t\in\TT : t<s}\left|\log\frac{\lambda(s)-\lambda(t)}{s-t}\right| < \infty.
$$
Since $(\R,d_{\R}(\cdot,\cdot))$ is a csm space, by endowing $\F$ with the metric
\beann
d_{\F}(f,g) = \inf_{\lambda\in\Lambda}
\left\{
\gamma(\lambda) \vee
\int_{\TT} \e^{-u} \sup_{t\in\TT}\{
d_{\R}(
f(t\wedge u),
g(\lambda(t)\wedge u)
)
\wedge1
\}
du
\right\},
\eeann
we turn it into a csm space \Citep{EthierKurtz}. 
The Borel sets generated by the corresponding topology will be denoted by $\B(\F)$ and it follows that $\B(\F^n)=\B(\F)^n$ \Citep{JacodShiryaev}. 
Consider now the following definition, given in accordance with \Citep[1.6.1]{Silvestrov}.
\begin{definition}\label{DefCadlagProcess}
A stochastic process $X(t)=(X_1(t),\ldots,X_n(t))$, $n\geq1$, $t\in\TT$, is called an $n$-dimensional \emph{c\`adl\`ag stochastic process} if each of its sample paths $X(\omega)=\{X(t;\omega)\}_{t\in\TT}$, $\omega\in\Omega$, is an element of $\F^n$.
\end{definition}
In light of this definition, we note that functions in $\F$ include e.g.\ sample paths of Markov processes, L\'{e}vy processes and semi-martingales, as well as empirical distribution functions. 
We further note that the space $C_{\TT}(\R) = \{f:\TT\rightarrow\R : f \text{ continuous}\}$ is a subspace of $\F$ and for these functions $d_{\F}$ reduces to the uniform metric $d_{\infty}(f,g)=\sup_{t\in\TT}|f(t)-g(t)|$. 
In addition, the Borel $\sigma$-algebra $\B(C_{\TT}(\R))$ generated by $d_{\infty}(\cdot,\cdot)$ on $C_{\TT}(\R)$ satisfies $\B(C_{\TT}(\R))=\{E\cap C_{\TT}(\R):E\in\B(\F)\}\subseteq\B(\F)$ \Citep[Chapter VI]{JacodShiryaev}. For details on filtrations with respect to c\`adl\`ag stochastic processes, see \Citep[Chapter VI]{JacodShiryaev}. 
Hence, we can accommodate e.g.\ diffusion processes or some other class of processes with continuous sample paths (note also that each space $C_{\TT}^k(\R)$, $k\in\N$, of $k$ times continuously differentiable functions is a subspace of $C_{\TT}(\R)$).

\subsection{The spatial and spatio-temporal state spaces}
Since both $\A$ and $\F$ are csm, 
by endowing $\M=\A\times\F$ with the supremum metric
$$
d_{\M}((l_1,f_1),(l_2,f_2)) = \max \{d_{\A}(l_1,l_2),d_{\F}(f_1,f_2)\},
\quad (l_1,f_1),(l_2,f_2)\in\M,
$$
(or any other equivalent metric) $\M$ itself becomes csm \Citep[p.\ 377]{DVJ1} and its Borel sets are given by
$\B(\M)=\B(\A\times\F)=\B(\A)\otimes\B(\F)$ (see e.g.\ \Citep[Lemma 6.4.2.]{Bogachev}). 

\subsubsection{The spatio-temporal state space}
Let $\G=\X\times\T$ and endow it with the supremum norm 
$\|(x,t)\|_{\infty} = \max\{\|x\|,|t|\}$ and the supremum metric 
\[
d_{\G}((x_1,t_1),(x_2,t_2)) = \|(x_1,t_1)-(x_2,t_2)\|_{\infty} =\max\{d_{\R^d}(x_1,x_2),d_{\R}(t_1,t_2)\},
\] 
where $(x_1,t_1),(x_2,t_2)\in\G$, 
so that $\G$ becomes a csm space and $\B(\G)=\B(\X\times\T) = \B(\X)\otimes\B(\T)$. 
We note that there are other possible equivalent metrics, which measure space and time differently (this is needed since it is the defining property of spatio-temporal point processes). However, for our purposes, this is the preferable choice. 

\begin{rem}
\label{RemarkMetricScaling}
Note that if the scales of time and space need to be altered, 
we may rescale e.g.\ time by letting $\|(x,t)\|_{\infty} = \max\{\|x\|,\beta|t|\}$, $\beta>0$. 
The current construction amounts to $\beta=1$. 
\end{rem}

The resulting underlying measurable spatio-temporal space which we will consider is given by 
$$
(\Y,\B(\Y)) = (\G\times\M,\B(\G\times\M)) =((\X\times\T)\times(\A\times\F),\B(\X)\otimes\B(\T)\otimes\B(\A)\otimes\B(\F))
$$
and we note that $\Y$ is a Polish space, as a product of Polish spaces. In fact, by endowing $\Y=\G\times\M$ with the supremum metric 
$$
d((x_1,t_1,l_1,f_1),(x_2,t_2,l_2,f_2))=\max\{d_{\G}((x_1,t_1),(x_2,t_2)),d_{\M}((l_1,f_1),(l_2,f_2))\},
$$ 
$\Y$ itself becomes a csm space \Citep[p.\ 8]{VanLieshout}.

Concerning $\G$, 
for any $(x,t)\in\G$ and $u,v\geq0$, consider the cylinder set
\bea
\label{CylinderSet}
(x,t) +C_u^v
&=&
(x,t) + \{(y,s)\in\G: \|y\|\leq u,|s|\leq v\}\\ 
&=& \{(y,s)\in\G: d_{\R^d}(x,y)\leq u, d_{\R}(t,s)\}\leq v\}.\nn
\eea
We see that in this metric space closed balls satisfy $B[(x,t),u] = (x,t) + C_u^u$. 

\begin{rem}
We note that in many, if not most, cases it is desirable to set $\TT=\T$ so that $\T$ describes the total part of time which we are considering for the constructed point process on $\Y$. 
\end{rem}

\subsubsection{The spatial state space}
The same reasoning gives us the (explicitly) non-temporal space 
$$
(\Y,\B(\Y)) = (\G\times\M,\B(\G)\otimes\B(\M)) =(\X\times(\A\times\F),\B(\X)\otimes\B(\A)\otimes\B(\F))
$$ 
with $\G$ having underlying norm $\|\cdot\|$ and metric 
$d_{\G}(x_1,x_2) = d_{\R^d}(x_1,x_2)$. 
We see here that the only temporal information present is found implicitly in each $f=\{f(t):t\in \TT\}\in\F$, provided that $t\in\TT$ describes time.

\subsection{Reference measures and reference c\`adl\`ag stochastic processes}
\label{SectionRefernceMeasures}
When constructing marked point processes, for various reasons, including the derivation of explicit structures for different summary statistics, one has to choose a sensible reference measure $\nu_{\M}$ for the mark space $(\M,\B(\M))$. For similar reasons one also usually considers some reference measure $\nu_{\G}$ on the ground space $(\G,\B(\G))$. 
We here let the reference measure on $(\Y,\B(\Y))$ be given by
\bea
\label{ReferenceMeasure}
\nu(\cdot) = [\nu_{\G}\otimes\nu_{\M}](\cdot)
= [\ell\otimes[\nu_{\A}\otimes\nu_{\F}]](\cdot)
,
\eea
where each component measure in $\nu$ governs the probabilistic structures of $\Psi$ on $\G$, $\A$ and $\F$, respectively. 

Regarding the measure on $\G$, we let it be given by Lebesgue measure $\ell$, where $\ell=\ell_{d}$ if $\G=\X$ and $\ell=\ell_{d+1}=\ell_{d}\otimes\ell_1$ if $\G=\X\times\T$. 
This is the usual choice when constructing point processes on $\R^d$ or spatio-temporal point processes on $\R^d\times\R$ (recall that the metrics are different). 

Recall the different auxiliary mark spaces given in Section \ref{SectionAuxiliaryMarks}. 
Irrespective of whether $\A=\A_d$, $\A=\A_c$ or $\A=\A_d\times\A_c$, we let the auxiliary mark reference measure $\nu_{\A}$ be given by some locally finite Borel measure on $\B(\A)$, i.e.\ $\nu_{\A}(D)<\infty$ for bounded $D\in\B(\A)$. 
In Section \ref{SectionAuxiliaryMarkMeasure} we discuss in detail some possible choices for $\nu_{\A}$.

Turning to the functional mark space $(\F,\B(\F))$, 
consider some suitable reference c\`adl\`ag stochastic process
\begin{align}
\label{ReferenceProcess}
&X_{\F}:(\Omega,\mathcal{F},\P)\rightarrow(\F,\B(\F)),
\\
&\Omega\ni\omega\mapsto X_{\F}(\omega) = \{X_{\F}(t;\omega)\}_{t\in\TT}\in\F,
\nn
\end{align}
where each $X_{\F}(\omega)$ is commonly referred to as a sample path/realisation of $X_{\F}$, 
and consider the induced probability measure 
\beann
\nu_{\F}(E) = \P(\{\omega\in\Omega : X_{\F}(\omega)\in E\}),
\quad E\in\B(\F),
\eeann
which will be the canonical reference measure under consideration. 
Note that the joint distribution on $(\F^n,\B(\F^n))$ of $n$ independent copies of $X_{\F}$ is given by $\nu_{\F}^n$, the $n$-fold product measure of $\nu_{\F}$ with itself. 
Also, we may conversely first choose the measure $\nu_{\F}$ and then consider the corresponding process $X_{\F}$.

For reasons which will become clear, $\nu_{\F}$ or $X_{\F}$ should be chosen so that suitable absolute continuity/change-of-measure results can be applied. 
More specifically, the distribution $P_X$ on $(\F^n,\B(\F^n))$, $n\geq1$, of some stochastic process $X=\{X(t)\}_{t\in\TT}\in\F^n$ of interest should have some (functional) Radon-Nikodym derivative $f_X$ with respect to $\nu_{\F}^n$, i.e.\ 
$P_X(E)=\int_{E}f_X(f)\nu_{\F}^n(df)=\E_{\nu_{\F}^n}[\1_E f_X]$, $E\in\B(\F^n)$ (see \Citep{Skorohod} for a discussion on such densities). 
In Section \ref{SectionExplicitFunctionalMeasures} we discuss such choices further and 
we look closer at Wiener measure as reference measure, i.e.\ the measure induced by a Brownian motion $X_{\F}=W=\{W(t)\}_{t\in\TT}$.

\subsection{Point processes}

Having defined the state spaces for the two types of point processes defined here, we now turn to their actual definitions. 

Let $(\Y,\B(\Y))$ be given by any of the two state spaces defined above. 
Furthermore, let $\mathcal{N}_{\Y}$ be the  collection of all locally finite counting measures $\varphi=\sum_{y\in\varphi}\delta_{y}$ on $\B(\Y)$, i.e.\ $\varphi(A)<\infty$ for bounded $A\in\B(\Y)$ and denote the corresponding counting measure $\sigma$-algebra by $\Sigma_{\mathcal{N}_{\Y}}$ (see \Citep[Chapter 9]{DVJ2}). 
Note that in what follows we will not distinguish in the notation between a measure $\varphi\in\mathcal{N}_{\Y}$ and its support $\varphi\subseteq\Y$ whereby $\varphi(\{y\})>0$ and $y\in\varphi$ (or $|\varphi\cap\{y\}|\neq0$) will mean the same thing for any $y\in\Y$.

\begin{definition}
If $\Psi:\Omega\rightarrow\mathcal{N}_{\Y}$, $\omega\mapsto\Psi(\cdot;\omega)$, is a measurable mapping from the probability space $(\Omega,\mathcal{F},\P)$ into the space $(\mathcal{N}_{\Y},\Sigma_{\mathcal{N}_{\Y}})$, we call $\Psi$ a \emph{point process} on $\Y$. 
\end{definition}
Denote further by $\mathcal{N}_{\Y}^*$ the sub-collection of $\varphi\in\mathcal{N}_{\Y}$ 
such that the \emph{ground measure} $\varphi_{G}(\cdot)=\varphi(\cdot\times\M)$ is a locally finite simple counting measure on $\B(\G)$ 
(simple means that $\varphi_{G}(\{g\})\in\{0,1\}$ for any $g\in\G$). 
We note that the simplicity of the ground measure further implies that $\varphi(\{(g,m)\})\leq\varphi_{G}(\{g\})\in\{0,1\}$ for any $(g,m)\in\G\times\M$.

Throughout, irrespective of the choice of $\G$, for any $\varphi=\sum_{(g,l,f)\in\varphi}\delta_{(g,l,f)}\in \mathcal{N}_{\Y}$ (where $g\in\G$ and $(l,f)\in\M$) we will write 
$\varphi + z = \sum_{(g,l,f)\in\varphi}\delta_{(g+z,l,f)}$ to denote a shift of $\varphi$ in the ground space by the vector $z\in\G$. This notation will, in particular, be used in the definition of stationarity. 

Recalling Definition \ref{DefCadlagProcess}, 
we see that any collection of elements $\{(g_1,l_1,f_1),\ldots,(g_n,l_n,f_n)\}\subseteq\Psi$ consists of the combination of a) a collection of spatial(-temporal) points $g_1,\ldots,g_n\in\G$, b) a collection $l_1,\ldots,l_n$ of random variables taking values in $\A$, and c) an $n$-dimensional c\`adl\`ag stochastic process $(f_1(t),\ldots,f_n(t))$, $t\in\TT$, all tied together.

\subsection{C\`adl\`ag functional marked point processes}\label{SectionCFMPP}

Following the terminology and structure given in \Citep{MateuFMPP}, we now have the following definition.

\begin{definition}\label{DefCFMPP}
Let $\Y=\G\times\M=\X\times(\A\times\F)$ and let $\Psi:\Omega\rightarrow\mathcal{N}_{\Y}$ be a point process on $\Y = \X\times(\A\times\F)$. If $\Psi\in\mathcal{N}_{\Y}^*$ a.s., we call 
$$
\Psi 
= \sum_{y\in\Psi}\delta_{y} 
= \sum_{(x,l,f)\in\Psi}\delta_{(x,l,f)}
$$
a (simple) \emph{c\`adl\`ag functional marked point process} (CFMPP) on $\Y$. 
\begin{itemize}
\item
If either $\A=\A_d$ or $\A=\A_d\times\A_c$, with $k_{\A}\geq2$ different type classifications, we call $\Psi$ a \emph{multivariate CFMPP}. 

\item
If further $\Psi$ a.s.\ takes its values in  
$\mathcal{N}^f=\{\varphi\in\mathcal{N}_{\Y}^*: \varphi(\Y)<\infty\}\subseteq\mathcal{N}_{\Y}$, we call it a \emph{finite} CFMPP. 
\end{itemize}

\end{definition}

We note that through a unique measurable enumeration (see \Citep[Chapter 9.1]{DVJ2}), we may write  
$$
\Psi
= \sum_{i=1}^{N}\delta_{(X_i,L_i,M_i)}, 
\quad 0\leq N\equiv\Psi(\Y)\leq\infty,
$$
for some sequence $\{(X_i,L_i,M_i)\}_{i=1}^{N}$ of random vectors, which geometrically corresponds to the support of $\Psi$. 
Here $X_i\in\R^d$ represents the spatial location of the $i$th point, $L_i\in\A$ its auxiliary mark and $M_i\in\F$ its functional mark. 
Note that when $\Psi$ is multivariate and $\A=\A_d\times\A_c$, to emphasise this aspect we often write $L_i=(L_{i1},L_{i2})$.
It should further be noted that by construction the \emph{ground process} (unmarked process) 
$$
\Psi_{\X}(\cdot) = \Psi_{G}(\cdot) = \sum_{x\in\Psi_G}\delta_x(\cdot) 
=\sum_{y\in\Psi}\delta_y(\cdot\times\A\times\F), 
$$
with support $\Psi_{G}=\{X_i\}_{i=1}^{N}\subseteq\X$, 
is a well-defined simple point process on $\X$ with 
$\Psi_{G}(B) = |\Psi_{G}\cap B| = \Psi(B\times\A\times\F) 
<\infty$ a.s.\ for bounded $B\in\B(\X)$. 
Note that the dual notation $\Psi_{G}=\Psi_{\X}$ is introduced for later convenience. 

\begin{rem}
Implicitly in the definition of a CFMPP we assume that $\Psi$ is simple (since $\Psi\in\mathcal{N}_{\Y}^*$ a.s.). 
Furthermore, 
if $\Psi_{\F}(\cdot) := \Psi(\X\times\cdot)$ is locally finite, then $\Psi_{\F}$ becomes a well-defined point process on $\F$ and we refer to $\Psi_{\F}$ as the associated mark space point process. 
However, we will not necessarily make that assumption here. 

\end{rem}

As already noted, by construction the collection of marks $\Psi_{\F}=\{M_i\}_{i=1}^{N}$, $M_i=\{M_i(t)\}_{t\in \TT}$, consists of random elements in $\F$ (functional random variables), which simply are c\`adl\`ag stochastic processes with sample paths/realisations $M_i(\omega)=\{M_i(t;\omega)\}_{t\in \TT}\in\F$, $\omega\in\Omega$. As we will see, by letting $M_i$ be given by a point mass $\delta_{f}$ on $(\F,\B(\F))$ we also have the possibility to consider marks which are given by deterministic functions $f$.

\subsection{Spatio-temporal c\`adl\`ag functional marked point processes}\label{SectionSTCFMPP}

We now turn to the case where we include the explicit temporal space $\T$ and, consequently, deal with spatio-temporal CFMPPs. 
Recall that $\Y = \G\times\M = (\X\times \T)\times(\A\times\F)$. 

\begin{definition}\label{DefSTCFMPP}
Let $\Psi:\Omega\rightarrow\mathcal{N}_{\Y}$ be a point process on $\Y = \G\times\M = (\X\times \T)\times(\A\times\F)$. If $\Psi\in\mathcal{N}_{\Y}^*$ a.s., we call 
$$
\Psi 
= \sum_{y\in\Psi}\delta_{y} 
= \sum_{(x,t,l,f)\in\Psi}\delta_{(x,t,l,f)}
$$
a (simple) \emph{spatio-temporal c\`adl\`ag functional marked point process} (STCFMPP) on $\Y$.
\begin{itemize}
\item
If either $\A=\A_d$ or $\A=\A_d\times\A_c$, with $k_{\A}\geq2$ different type classifications, we call $\Psi$ a \emph{multivariate STCFMPP}. 

\item
When $\Psi\in\mathcal{N}^f=\{\varphi\in\mathcal{N}_{\Y}^*: \varphi(\Y)<\infty\}\subseteq\mathcal{N}_{\Y}$ a.s., we call $\Psi$ a \emph{finite} STCFMPP. 
\end{itemize}

\end{definition}

A few things should be mentioned at this point. 
To begin with we note that we may write 
$$
\Psi 
= \sum_{i=1}^{N}\delta_{(X_i,T_i,L_i,M_i)}, 
\quad 0\leq N\equiv\Psi(\Y)\leq\infty,
$$ 
where all $X_i\in\R^d$ represent the spatial locations, $T_i\in \T$ the occurrence times, $L_i\in \A$ the related auxiliary marks and $M_i=\{M_i(t)\}_{t\in \TT}\in\F$ the functional marks. 
In connection hereto, an interesting feature which sets this scenario apart from the non-spatio-temporal CFMPP case is that we here have a natural enumeration/order of the points, which is obtained by assigning the indices $1,\ldots,N$ to the points according to their ascending occurrence times 
$T_1<\ldots<T_N$. 
Hereby the support may be written as $\Psi=\{((X_i,T_i),(L_i,M_i))\}_{i=1}^{N}=\{(X_i,T_i,L_i,M_i)\}_{i=1}^{N}$. 
Also here, when $\Psi$ is multivariate and $\A=\A_d\times\A_c$, we sometimes write $L_i=(L_{i1},L_{i2})$.

We note further that, by construction, the ground process is a well-defined simple point process on $\X\times \T$, i.e.\ 
$$
\Psi_{\X\times \T}(B\times C)=
\Psi_{G}(B\times C) 
= \sum_{(x,t)\in\Psi_G}\delta_{(x,t)}(B\times C) 
= \Psi(B\times C\times(\A\times\F))< \infty,
$$ 
for bounded $B\times C\in\B(\X\times \T)$, with support $\Psi_{G}=\{(X_i,T_i)\}_{i=1}^{N}$. However, at times it may be useful to require that also $\Psi_{\X}=\{X_i\}_{i=1}^{N}$ and/or $\Psi_{\T}=\{T_i\}_{i=1}^{N}$ constitute well-defined point processes. 

\begin{definition}
\label{DefinitionGrounding}
Let $\Psi$ be a STCFMPP. 
\begin{itemize}
\item
If $\Psi_{\X}(\cdot)=\Psi_{G}(\cdot\times\T) = \Psi(\cdot\times \T\times\A\times\F)$ is simple and locally finite, i.e.\ 
the spatial part $\Psi_{\X}$ of the ground process also constitutes a well-defined simple point process on $\X$, 
we say that $\Psi$ is \emph{spatially grounded}.

\item
Similarly, if $\Psi_{\T}(\cdot)=\Psi_{G}(\X\times\cdot) = \Psi(\X\times\cdot\times\A\times\F)$ is simple and locally finite, so that $\Psi_{\T}$ constitutes a well-defined point process on $\T$, we say that $\Psi$ is \emph{temporally grounded}.
\end{itemize}
\end{definition}

To additionally ground $\Psi$ spatially and/or temporally can be of importance for different reasons. 
For instance, as we shall see, we may speak of three different types of stationarity of $\Psi$ and 
when $\Psi$ is temporally grounded we may e.g.\ define conditional intensities, as considered by e.g.\ \Citep{Ogata,SchoenbergIntensity,VereJones}.

\subsection{The point process distribution}
In what follows, we only distinguish in the notation between CFMPPs and STCFMPPs when necessary. 
Let $\Psi$ be a (ST)CFMPP and let 
the induced probability measure of $\Psi$ on $\Sigma_{\mathcal{N}_{\Y}}$ be denoted by $P$, i.e.
\begin{align*}
\P(\{\omega\in\Omega:\Psi(\omega)\in R\}) 
=
P(R)
= \int_{R} P(d\varphi),
\quad 
R\in\Sigma_{\mathcal{N}_{\Y}}.
\end{align*}
Note that since $\Psi$ is a simple point process on the csm space $\Y$, $P$ is completely and uniquely determined by its finite dimensional distributions, i.e.\ the collection of joint distributions of $(\Psi(A_1),\ldots,\Psi(A_n))$ for all collections of bounded $A_i\in\B(\Y)$, $i=1,\ldots,n$, $n\in\Z_+$, as well as by its void probabilities $v(A)=\P(\Psi(A)=0)$, $A\in\B(\Y)$ (see e.g.\ \Citep[Chapter 1]{VanLieshout}).

\subsection{Stationarity and isotropy}

We next give the definition of stationarity which, irrespective of the choice of $\G$, is the usual definition for marked point processes, i.e.\ translational invariance of the ground process. 
\begin{definition}
Let $\Psi$ be a (ST)CFMPP.
\begin{itemize}
\item
Then $\Psi$ is \emph{stationary} if $\Psi + z \stackrel{d}{=} \Psi$ for any $z\in\G$. 
\item
In the case of a STCFMPP, for $z=(a,b)\in\X\times\T$, if stationarity only holds when $b=0$ we say that $\Psi$ is \emph{spatially stationary} and if it only holds when $a=0$ we say that $\Psi$ is \emph{temporally stationary}.
\item
$\Psi$ is \emph{isotropic} if it is stationary and, in addition, $\Psi_{G}$ is rotation invariant with respect to rotations about the origin $0\in\G$.
\end{itemize}
\end{definition}
We see e.g.\ that when $\Psi$ is temporally grounded, temporal stationarity implies that $\Psi_{\T}$ is a stationary point process on $\T$. 
Note further that one often refers to $\Psi$ as \emph{homogeneous} if it is both stationary and isotropic, and as \emph{inhomogeneous} if it is not stationary.

\subsection{Supports}
Being a stochastic process, 
which may have zeroes on $\TT$, 
conditionally on $\Psi_G$ and the auxiliary marks $L_i\in\A$, we may consider different types of supports for each functional mark $M_i=\{M_i(t)\}_{t\in \TT}$. 
We distinguish between the \emph{deterministic support} $\mathrm{supp}(M_i)=\{t\in\TT:M_i(t;\omega)\neq0 \text{ for any }\omega\in\Omega\}$ and the \emph{stochastic support} $\mathrm{supp}^*(M_i)=\{t\in\TT:M_i(t)\neq0\}$. 
We note that $\mathrm{supp}^*(M_i)\subseteq\mathrm{supp}(M_i)$ is a random subset of $\TT$ 
and, moreover, 
$$
\mathrm{supp}(M_i) 
= \bigcup_{\omega\in\Omega} \{t\in\TT:M_i(t;\omega)\neq0\}
= \bigcup_{\omega\in\Omega} \mathrm{supp}^*(M_i;\omega) 
.
$$
For a STCFMPP $\Psi$, when $L_i\geq0$ represents a (random) time and we condition on the $T_i$'s and the $L_i$'s, a natural construction would be to let $D_i=(T_i+L_i)\wedge T^*$ and $\mathrm{supp}(M_i)=[T_i, D_i)$ so that $M_i(t)=0$ for all $t\notin[T_i, D_i)$. 
The interpretation here would be that $T_i$ symbolises the \emph{birth time} of the $i$th point, $L_i$ its \emph{lifetime} and $D_i$ its \emph{death time}. 
Furthermore, we note that if e.g.\  $M_i(t)=\1_{[T_i,D_i)}(t)W_i((t-T_i)\wedge0)$ for a Brownian motion $W_i$, then $\ell_{1}((\mathrm{supp}(M_i)\setminus\mathrm{supp}^*(M_i))^c)=\ell_1(\{t\in[0,L_i):W_i(t)=0\})=0$ a.s.\ (see e.g.\ \Citep{Klebaner}). 
In the case of a CFMPP one could similarly let $\mathrm{supp}(M_i)=[0, L_i)$.

\subsection{Geometric representation and spatio-temporal Boolean models}
Let $\Psi$ be a STCFMPP where $\X\subseteq\R^2$ and where each $M_i$ a.s.\ takes values in the sub-space of continuous functions on $\TT$. 
One possibility for interpretation is obtained by letting the disk (ball) $B_{\X}[X_i,M_i(t)]=\{x\in\R^2:d_{\R^2}(X_i,x)\leq M_i(t)\}$ with centre $X_i$ and radius $M_i(t)$ illustrate the space occupied by the $i$th point of $\Psi$ at time $t\in\T$ (with the convention that $B_{\X}[X_i,r]=\emptyset$ if $r\leq0$). Hereby, at time $t$ we may illustrate $\Phi_M$ as the Boolean model (see e.g.\ \Citep{SKM}) 
$$
\Xi(t) = \bigcup_{i=1}^{N} B_{\X}[X_i,M_i(t)] = \bigcup_{(X_i,T_i,L_i,M_i)\in\Psi:t\in\supp^*(M_i)} B_{\X}[X_i,M_i(t)].
$$
Consequently, $\Psi$ may be represented by the collection 
$$
\Xi = \int_{\T} \Xi(dt)
= \bigcup_{i=1}^{N} \Xi_i = \bigcup_{i=1}^{N} 
\{(x,y,z)\in\R^3 : z\in\supp^*(M_i), d_{\R^2}(X_i,(x,y))\leq M_i(z)\}
$$
and we see that whenever $\supp(M_i)$ is bounded, each \emph{deformed cone} $\Xi_i$ a.s.\ is a compact subset of $\R^3$. Figure \ref{FigureGeometry} illustrates a realisation of such a random set $\Xi$. 
Hence, the cross section of $\Xi$ at $z=t$ gives us $\Xi(t)$ and 
in the context of e.g.\ forest stand modelling, we find that $\Xi(t)$ gives us the geometric representation of the cross section of the forest stand at time $t$, at some given height (usually \emph{breast height}). 
Note that when in addition $\ell(\X)<\infty$, depending on the form of the functional marks, we may derive geometric properties such as the expected coverage proportion $\frac{\pi}{\ell(\X)}\sum_{n=0}^{\infty}\sum_{i=1}^{n}\E[M_i(t)^2]\P(N=n)$ of $\X$ at time $t$ (provided that the disks do not overlap). 

\begin{figure}[htbp]
\begin{center}
\includegraphics[width=0.5\textwidth]{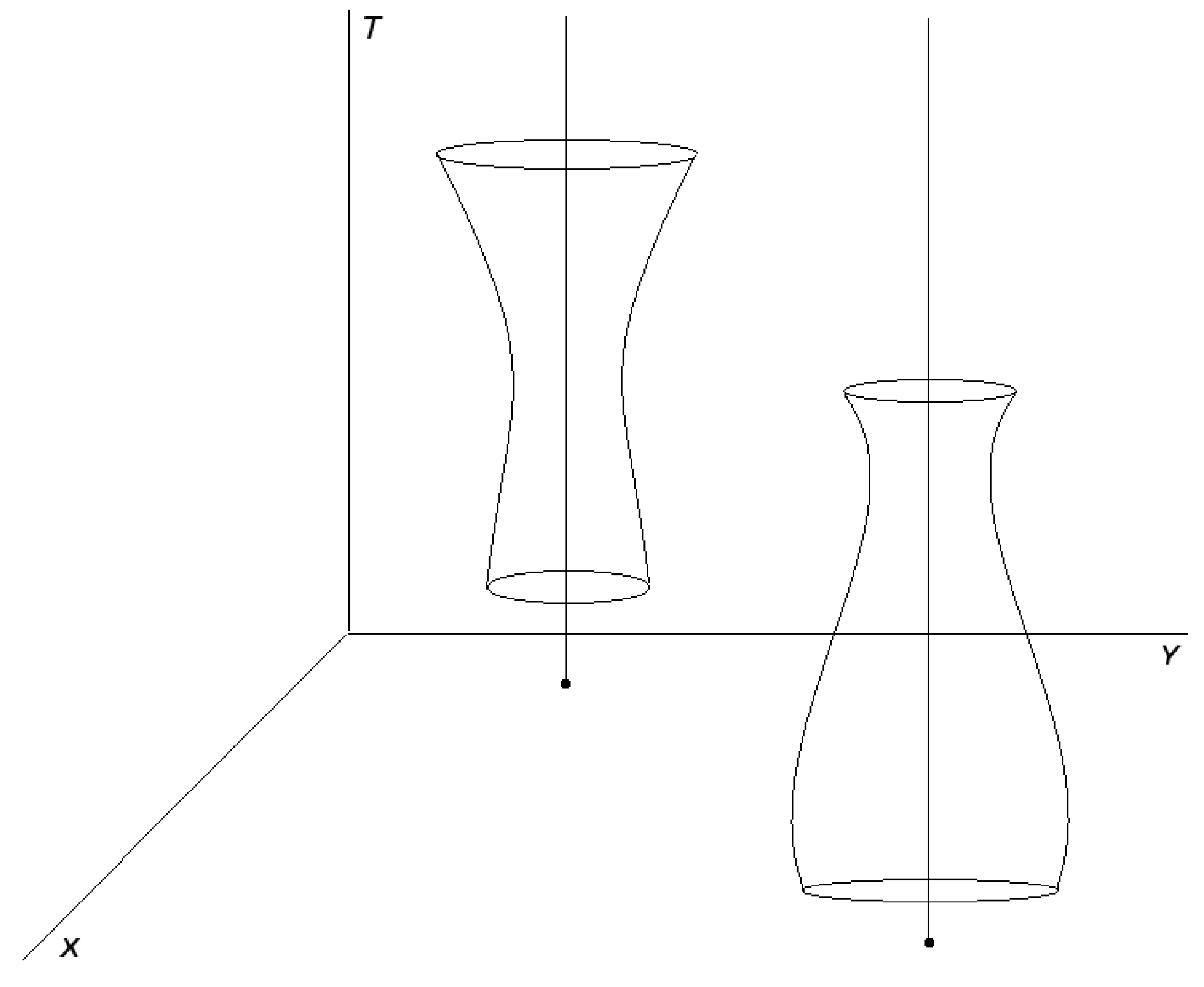}
\caption{A realisation of a random set $\Xi$.}
\label{FigureGeometry}
\end{center}
\end{figure}

\section{Examples of (ST)CFMPPs}\label{SectionExamples}
The class of (ST)CFMPPs provides a framework to give structure to a series of existing models and it allows for the construction of new important models and modelling frameworks, which have uses in different applications. 
Below we present a few such instances. 

\subsection{Marked (spatio-temporal) point processes}\label{SectionMPP}
Not surprisingly, (ST)CFMPPs generalise ordinary marked (spatio-temporal) point processes. 
To see this, by considering the class $\F_c=\{f\in\F : f \text{ is constant}\}$, we find that when $\Psi$ a.s.\ is restricted to the space $\G\times(\A\times\F_c)$, its functional marks become constant functions $M_i(t) = \xi_i$ with (random) values $\xi_i\in\R$, $i=1,\ldots N$. 
Hereby, in the case of a CFMPP $\Psi$, $\bar{\Psi} = \{(X_i,L_i,M_i(0)):X_i\in\Psi_{G}\} = \{(X_i,L_i,\xi_i):X_i\in\Psi_{G}\}$ gives us a classical definition of a marked point process (provided $0\in\supp(M_i)$). 
Similarly, in the case of a STCFMPP $\Psi$, $\bar{\Psi} = \{(X_i,T_i,L_i,M_i(0)):X_i\in\Psi_{G}\} = \{(X_i,T_i,L_i,\xi_i):X_i\in\Psi_{G}\}$ gives the classical definition of a marked spatio-temporal point process. 

Here, when $\Psi$ is multivariate with, say, $\A=\A_c$, $L_i$ would be a discrete variable which describes a point's type and $\xi_i$ would be the size of the quantitative mark. As a consequence $\bar{\Psi}$ would become multivariate.

A slightly more direct way of creating, say, a marked spatio-temporal point process $\bar{\Psi}$ through a STCFMPP is to let 
$$
\bar{\Psi}(B\times C\times D)=\Psi(B\times C\times D\times\F)
= \sum_{(x,t,l,f)\in\Psi}\delta_{(x,t,l,f)}(B\times C\times D\times\F),
$$
$B\times C\times D\in\B(\X)\times\B(\T)\times\B(\A)$
so that the $i$th mark is given by $L_i\in\A$. 
This is naturally possible only if $\bar{\Psi}$ constitutes a well-defined point process in its own right.

\subsection{Spatio-temporal geostatistical marking and geostatistics with uncertainty in the sampling locations}
\label{SectionGeostatMarkin}

For classic marked point processes $\bar{\Psi}=\{(X_i,M_i)\}_{i=1}^{N}$ one often speaks of \emph{geostatistical marking} \Citep{Illian}. This is the case where, conditionally on $X_i$, the marks $M_i=Z_{X_i}$, $i=1,\ldots,N$, are provided by some random field $Z=\{Z_x\}_{x\in\X}$. This may be regarded as \emph{sampling the random field $Z$ at random locations}, provided by $\{X_i\}_{i=1}^{N}$. 
Within the CFMPP-context this idea may further be extended to the case of marks coming from a spatio-temporal random field $Z=\{Z_{x}(t)\}_{(x,t)\in\X\times\TT}$. 
\begin{definition}\label{DefGeostatMarking}
Consider a spatio-temporal random field $Z=\{Z_{x}(t)\}_{(x,t)\in\X\times\TT}$. 
If, conditionally on $\Psi_G$ and $\{L_i\}_{i=1}^{N}$, the marks of a (ST)CFMPP $\Psi$ are given by $M_i = \{Z_{X_i}(t)\}_{t\in\TT}$, $i=1,\ldots,N$, we say that $\Psi$ has a \emph{spatio-temporal geostatistical marking}. 
\end{definition}
We may also refer to this type of marking as 
\emph{sampling a spatio-temporal random field at random spatial locations}. 

Given (dependent) random fields $Z_j=\{Z_j(x,t)\}_{(x,t)\in\X\times\TT}$, $j=1,\ldots,k$, when $\Psi$ is multivariate, natural constructions include 
\begin{itemize}
\item
$M_i(t) = \sum_{j=1}^{k_{\A}}\1\{L_{i}=j\}Z_{j}(X_i,t)$, when $\A=\A_d$,
\item
$M_i(t) = \1_{[T_i,T_i+L_{i2})}(t)\sum_{j=1}^{k_{\A}}\1\{L_{i1}=j\}Z_{j}(X_i, t-T_i)$, 
when $\A=\A_d\times\A_c=\{1,\ldots,k_{\A}\}\times[0,\infty)$.
\end{itemize}

\subsubsection{Geostatistical functional data}
When observations have been made of a spatio-temporal random field, at a set of fixed known locations $x_i \in \X$, $i=1,...,n$, one often speaks of \emph{geostatistical functional data}. The class of related data types comprise a broad family of spatially dependent functional data. 
For a good account of these types of data, the reader is referred to 
\Citep{DelicadoGiraldoComas,GiraldoDelicadoMateu2010,GiraldoDelicadoMateu2011}. 

Here, given some spatial functional process 
$
\left\{Z_{x}:\ x\in \X\subseteq\R^{d}\right\},
$
we assume to observe a set of functions, or rather spatially located curves, $\left(Z_{x_{1}}(t),\ldots,Z_{x_{n}}(t)\right)$ at locations $x_i \in \X$, $i=1,...,n$, for $t \in\TT=[a,b]$, which define the set of functional observations. Each function is assumed to belong to a Hilbert space
$$
L_2(\TT)=\{f:\TT\rightarrow \R : \int_{\TT} f(t)^2 dt < \infty\}
$$
with the inner product $\langle f,g \rangle=\int_{\TT} f(t)g(t) dt$. 
Moreover, for a fixed site $x_{i}$, the observed data is assumed to follow the model
$$
Z_{x_{i}}(t) = \mu_{x_{i}}(t) + \epsilon_{x_{i}}(t),\ i=1,\ldots,n,
$$
where $\epsilon_{x_{i}}(t)$ are zero-mean residual processes and each $\mu_{x_{i}}(\cdot)$ is a mean function which summarises the main structure of $Z_{x_{i}}$.
For each $t$, we assume that the process is a second-order stationary functional random process. That formally means that the expected value $\E[Z_{x}(t)]=\mu(t)$, $t\in\TT$, $x\in\X$, and the variance $\Var(Z_{x}(t))={\sigma^{2}}(t)$, $t\in\TT$, $x\in \X$, do not depend on the spatial location. 
In addition, we have that
\begin{itemize}
\item $\Cov(Z_{x_{i}}(t),Z_{x_{j}}(t))=\mathbb{C}(h,t)$, 
where $h=\left\|x_{i}-x_{j}\right\|$,  for all $t\in\TT$ and all $x_{i}, x_{j}\in \X$.
\item $\frac{1}{2}\Var(Z_{x_{i}}(t)-Z_{x_{j}}(t))=\gamma(h,t)=\gamma_{x_i x_j}(t)$, 
where $h=\left\|x_{i}-x_{j}\right\|$, for all $t\in\TT$ and all $x_{i},x_{j}\in \X$.
\end{itemize}
Note that under the second-order stationarity assumption one may write $\frac{1}{2}\Var(Z_{x}(t)-Z_{x+h}(t))$, $h=\left\|x_{i}-x_{j}\right\|$, for $\frac{1}{2}\Var(Z_{x_{i}}(t)-Z_{x_{j}}(t))$. However, for clarity we do prefer the more general formulation. 
Since we are assuming that the mean  function is constant over $\X$, the function $\gamma(h,t)$, called the \emph{variogram} of $Z_x(t)$, can be expressed by
\begin{equation*}
\gamma(h,t)=\gamma_{x_i x_j}(t)=\frac{1}{2}\Var(Z_{x_{i}}(t)-Z_{x_{j}}(t))=\frac{1}{2}\E\left[Z_{x_{i}}(t)-Z_{x_{j}}(t)\right]^{2}.
\end{equation*}
By integrating this expression over $\TT$, using Fubini's theorem and following \Citep{GiraldoDelicadoMateu2010}, 
a measure of spatial variability is given by
\bea
\label{TraceVariogram}
\gamma(h)=\frac{1}{2}\E\left[\int_{\TT}(Z_{x_{i}}(t)-Z_{x_{j}}(t))^2 dt \right]
\eea
for $x_i, x_j \in \X$  with $h=\|x_i-x_j\|$. 
This is the so-called \emph{trace-variogram} and it is used to describe the spatial variability among functional data across an entire spatial domain. In this case, all possible location pairs are considered.

Consider now the scenario where one would perform some geostatistical analysis, such as spatio-temporal prediction \Citep{GiraldoDelicadoMateu2010}, in a spatio-temporal random field when, in addition, there is uncertainty in the monitoring locations $x_i$, $i=1,\ldots,n$. 
Note that one then instead samples the random field/spatial functional process $Z$ at locations $X_i=x_i+\varepsilon_i$, $i=1,\ldots,n$, where each $\varepsilon_i$ follows some suitable spatial distribution. 
Here the CFMPP framework is the correct one since $\{X_i\}_{i=1}^n$ constitutes a spatial point process. 
Consequently, the above geostatistical framework could be extended to incorporate such randomness in the sampling locations. 
In the deterministic case, i.e.\ when $\varepsilon_i\equiv0$, \Citep{GiraldoDelicadoMateu2010} proposed the estimator 
$\widehat{Z_{x_0}}(t)=\sum_{i=1}^{n}\lambda_i(t) Z_{X_i}(t)=\sum_{i=1}^{n}\lambda_i(t) M_i(t)$, $\lambda_i:\TT\rightarrow\R$, $i=1,\ldots,n$, for the marginal random process $\{Z_{x_0}(t)\}_{t\in\TT}$, $x_0\in\X$.
Assuming that the $X_i$'s are in fact random, following \Citep{GiraldoDelicadoMateu2010}, the associated prediction problem may be expressed as
\begin{align*}
&\min_{\lambda_1,\ldots,\lambda_n\in L_2(\TT)} \E\left[\int_{\TT}(\widehat{Z_{x_0}}(t)-Z_{x_0}(t))^2 dt \right]
=
\min_{\lambda_1,\ldots,\lambda_n\in L_2(\TT)} 
\E\left[\int_{\TT}\left(\sum_{i=1}^{n}\lambda_i(t) M_i(t) - Z_{x_0}(t)\right)^2 dt \right]
\\
&=
\min_{\lambda_1,\ldots,\lambda_n\in L_2(\TT)} 
\int_{\X^n}  \int_{\TT}
\E\left[\left.\left(\sum_{i=1}^{n}\lambda_i(t) Z_{x_i}(t) - Z_{x_0}(t)\right)^2 \right| X_1=x_1,\ldots, X_n=x_n\right] dt
\\
&\times
\frac{j_n^G(x_1,\ldots,x_n)}{n!} d(x_1,\ldots,x_n)
\end{align*}
by Fubini's theorem, 
where $j_n^G(\cdot)$ is the $n$th Janossy density of the ground process $\Psi_G=\{X_i\}_{i=1}^n$ (see Section \ref{SectionFinitePPs}). 
Hence, one obtains a geostatistical analysis based on CFMPPs.

\subsection{LISA and LISTA functions}

In the context of spatial point processes, \Citep{CollinsCressie} developed exploratory data analytic tools, in terms of Local Indicators of Spatial Association (LISA) functions based on the product density, to examine individual points in the point pattern in terms of how they relate to their neighbouring points. For each point $X_i$ of the point process/pattern we can attach to it a LISA function $M_i(h)$, $h=\|X_i-x\|\geq0$, $x\in\X$, which determines the local spatial structure associated to each event of the pattern. These functions can be regarded as functional marks \Citep{MateuLorenzoPorcu}. To perform statistical inference, which is needed for example in testing for local clustering, \Citep{CollinsCressie} developed closed form expressions of the auto-covariance and cross-covariance between any two such functions. These covariance structures are complicated to work with as they live in high-dimensional spaces.

If the ground point pattern evolves in time, i.e.\ if we have a spatio-temporal point pattern, then we can extend the ideas of LISA functions to incorporate time in their structure. In this case, local versions of spatio-temporal product densities provide the concept of LISTA surfaces \Citep{Rodriguez}. Attached to each spatio-temporal location $(X_i,T_i)$ we now have surfaces $M_i(x,t)$, $(x,t)\in\X\times\T$ (i.e.\ with dimensions space and time). When we assume that $M_i(x,t)=M_i(h)$, $h=d_{\G}((X_i,T_i),(x,t))$, these surfaces can again be regarded as functional marks. The LISTA surfaces provide information on the local spatio-temporal structure of the point pattern.

\subsection{The (stochastic) growth-interaction process}\label{SectionGI}
One of the models which has given rise to a substantial part of the ideas underlying the construction of STCFMPPs is the \emph{growth-interaction process}. It has been extensively studied in a series of papers (see e.g.\ \citep{Comas,MateuFMPP,Cronie,CronieSarkka,CronieForest,RenshawComas,RenshawComasMateu,RS1,RS2}), mainly within the forestry context. However, its representation as a functional marked point process has only been noted in \Citep{MateuFMPP,Cronie}. 

It is a STCFMPP for which the ground process $\Psi_G$ is generated by a spatial birth-death process, which has Poisson arrivals $T_i$, with intensity $\alpha>0$, and uniformly distributed spatial locations $X_i$. Furthermore, the auxiliary marks are the associated holding times $L_i$, which are independently $Exp(\mu)$-distributed, $\mu>0$, and, conditionally on the previous components, the functional marks are given by a system of ordinary differential equations,
\beann
\frac{dM_i(t)}{dt}=g(M_i(t);\theta) - \sum_{(X_j,T_j,L_j,M_j(t))\in\Psi,\ j\neq i} h((X_i,T_i,L_i,M_i(t)),(X_j,T_j,L_j,M_j(t));\theta),
\eeann
$i=1,\ldots,N$, where $t\in\supp(M_i)=[T_i,D_i)$, $D_i=(T_i+L_i)\wedge T^*$. 
Here $g(\cdot)$ represents the individual growth of the $i$th \emph{individual}, in absence of spatial interaction with other individuals, and $h((X_i,T_i,L_i,M_i(t)),(X_j,T_j,L_j,M_j(t));\theta)$ the amount of spatial interaction to which individual $i$ is subjected by individual $j$ during $[t,t+dt]$.

As can be found in the above mentioned references, the usual application of this model is the modelling of the collective development of trees in a forest stand; $X_i$ is the location of the $i$th tree, $T_i$ is its birth time, $D_i$ its death time, and $M_i(t)$ its radius (at breast height) at time $t$. 

As one may argue that this approach does not sufficiently incorporate individual growth features in the radial growth, \Citep{Cronie} suggested that a scaled white noise processes should be added to each functional mark equation, i.e.
$$
dM_i^*(t) = dM_i(t) + \sigma(M_i(t);\theta)dW_i(t),
$$
where $W_1(t),\ldots,W_N(t)$, are independent standard Brownian motions and $\sigma(\cdot)$ is some suitable diffusion coefficient. 
Here the noise would represent measurement errors and give rise to individual growth deviations. 
The resulting stochastic differential equation marked point process, the \emph{stochastic growth-interaction process}, was then studied in the simplified case where the spatial interaction is negligible, i.e.\ $h(\cdot)\equiv0$.

\subsection{Applications}\label{SectionApplications}
Besides the applications mentioned previously, we here give a list of further possible applications of (ST)CFMPPs, providing a wide scope of the current framework.

\begin{enumerate}
\item {\em Modelling nerve fibres:} 
$X_i$ gives the location of the root of the nerve. 
A mark $M_i$ (here continuous) provides the shape of the actual nerve fibre and the related auxiliary variable is given by $L_i=(L_{i1},L_{i2})\in\A_c=[0,\infty)\times[0,2\pi)$, where $\supp(M_i)=[0,L_{i1})$ and $L_{i2}$ represents a random rotation angle of $M_i$, which gives the direction of the fibre.

\item {\em Spread of pollutant:} 
$X_i$ is the pollution location, $M_i(h)$ gives us the ground concentration of the contaminant at distance $h=\|X_i-x\|$, $x\in\X$, from $X_i$.

\item {\em Modelling tumours:} 
$\X$ represents (a region in) the human body,  
$X_i$ is the location of the $i$th tumour and $M_i(t)$ its approximate radius at time $t$.

\item {\em Disease incidences in epidemics:} 
Each $M_i(t)$ is a stochastic process with piecewise constant sample paths (e.g.\ a Poisson process), which counts the number of incidences having occurred by time $t$ at epidemic centre $X_i$.  

\item {\em Population growth:} 
$X_i$ is the location of a village/town/city, $T_i$ the time point at which it was founded and $M_i(t)$ its total population at time $t$. 

\item {\em Mobile communication:} 
Letting $\T=\TT$, consider a STCFMPP $\Psi$ where each $X_i$ represents the location of a cellphone caller who makes a call at time $T_i$, which lasts for $L_i\in\A_c=[0,\infty)$ time units, i.e. the call ends at time $D_i=(T_i+L_i)\wedge T^*$.  
Then the function $M_i(t)=\1_{[B_i,D_i)}(t)$ represents the phone call in question and the total load of a server/antenna located at $s\in\X$, which has spatial reach within the region $B\subseteq\X$, $s\in B$, is $N_s(t)=\sum_{i=1}^{N}\1_B(X_i)M_i(t)$. Assuming that the server has capacity $c_s(t)$ at time $t$, it breaks down if $\sup_{t\in\TT}c_s(t)-N_s(t)\leq0$. 
Note the connection with \Citep{Baum}.

An extension here could be to let $M_i(t) = \xi_i\1_{[T_i,D_i)}(t)$ for some random quantity $\xi_i=\xi_i(X_i,T_i,L_i)$, which represents the specific load that call $i$ puts on the network.

\end{enumerate}

\section{Point process characteristics of (ST)CFMPPs}
\label{SectionPointProcessCharacteristics}

For a wide range of summary statistics, the core elements are the product densities and the intensity function(al). We here derive these for (ST)CFMPPs under a few usual assumptions. 
In addition, we define two further, highly important, building blocks for different statistics for point processes;  the Palm measures and the Papangelou conditional intensities. 

Recall that for both types of processes the mark space is given by $\M=\A\times\F$ and to provide a general notation, which may be used to describe both CFMPPs and STCFMPPs, we write 
$\Psi = \sum_{(g,m)\in\Psi}\delta_{(g,m)} = \sum_{(g,l,f)\in\Psi}\delta_{(g,l,f)}$, 
where $x=g\in\G=\X$ in the CFMPP case and $(x,t)=g\in\G=\X\times\T$ in the STCFMPP case.

Throughout, for different measures constructed, we will use the following measure extension approach. 
When some set function $\mu(A)$ is defined for the bounded Borel sets $A$ in some Borel space $(\mathcal{X},\B(\mathcal{X}))$, by assuming that $\mu(\cdot)$ is locally finite, $\mu(\cdot)$ becomes a finite measure on the ring of bounded Borel sets. 
Hereby one may extend $\mu$ to a measure on the whole $\sigma$-algebra $\B(\mathcal{X})$ (see e.g.\ \Citep[Theorem A, p. 54]{Halmos}).

\subsection{Product densities and intensity functionals}
We first consider the (factorial) moment measures and the product densities of a (ST)CFMPP $\Psi$. The construction of the product densities paves the way for the construction of certain likelihood functions and summary statistics. %the spatio-temporal (functional) marked $K$-function. 

We start by defining the \emph{factorial moment measures}. 
\begin{definition}
For any $n\geq1$ and bounded 
$A_1,\ldots,A_n = (G_1\times H_1),\ldots,(G_n\times H_n)\in\B(\Y)$, define 
\begin{align*}
&\alpha^{(n)}(A_1\times\cdots\times A_n)
=
\E\left[
\sum\nolimits_{(g_1,l_1,f_1),\ldots,(g_n,l_n,f_n)\in \Psi}^{\neq}
\prod_{i=1}^{n}\1\{(g_i,l_i,f_i)\in A_i\} 
\right],
\end{align*}
where $\sum^{\neq}$ denotes a sum over distinct elements. Note that $\alpha^{(n)}$ may be extended to a measure on the $n$-fold product $\sigma$-algebra $\B(\Y)^n=\bigotimes_{i=1}^{n}\B(\Y)$, the $n$th \emph{factorial moment measure}. 
\end{definition}
Note that in the STCFMPP setting, $G_i=B_i\times C_i\in\B(\X\times\T)$ and $H_i=D_i\times E_i\in\B(\A\times\F)$, $i=1,\ldots,n$.

Recall next the reference measure $\nu$ in (\ref{ReferenceMeasure}) and assume that 
$\alpha^{(n)}\ll \nu^n$, i.e.\ that $\alpha^{(n)}$ is absolute continuous with respect to the $n$-fold product measure of the reference measure $\nu$ with itself. 
This leads to the definition of (functional) product densities.

\begin{definition}
The permutation invariant Radon-Nikodym derivatives (measurable functionals) $\rho^{(n)}$, which are defined by the integral formula 
\begin{align}
\label{Campbell}
&\E\left[
\sum\nolimits_{(g_1,l_1,f_1),\ldots,(g_n,l_n,f_n)\in\Psi}^{\neq}
h((g_1,l_1,f_1),\ldots,(g_n,l_n,f_n))
\right]
=
\\
&=
\int_{\Y^n} h((g_1,l_1,f_1),\ldots,(g_n,l_n,f_n)) \alpha^{(n)}(d(g_1,l_1,f_1)\times\cdots\times d(g_n,l_n,f_n))
\nn
\\
&=
\int_{\Y}\cdots\int_{\Y} h((g_1,l_1,f_1),\ldots,(g_n,l_n,f_n)) 
\rho^{(n)}((g_1,l_1,f_1),\ldots,(g_n,l_n,f_n)) \prod_{i=1}^{n} \nu(d(g_i,l_i,f_i))
\nn
\end{align}
for any measurable functional $h:\Y\rightarrow[0,\infty)$, are referred to as the $n$th \emph{product densities}. 
\end{definition}
Note here that (\ref{Campbell}) is referred to as the \emph{Campbell theorem}. 
Furthermore, $\rho^{(n)}$ is partly a (functional) density on $\F$, as discussed in Section \ref{SectionRefernceMeasures}.
We note that, heuristically, the interpretation of 
$\rho^{(n)}((g_1,l_1,f_1),\ldots,(g_n,l_n,f_n)) \prod_{i=1}^{n} \nu(d(g_i,l_i,f_i))$ is 
the probability of finding ground process points in the infinitesimal regions $dg_1,\ldots,dg_n\subseteq\G$, with associated marks in the infinitesimal regions $d(l_1,f_1),\ldots,d(l_n,f_n)\subseteq\A\times\F$.

Similarly to $\alpha^{(n)}$ we may also define the $n$th \emph{moment measures} 
\[
\mu^{(n)}(A_1\times\cdots\times A_n) = \E[\Psi(A_1)\cdots\Psi(A_n)], 
\quad A_1,\ldots,A_n \in\B(\Y), 
n\geq1.
\]
Furthermore, since the \emph{intensity measure} $\mu$ of a simple point process coincides with its first moment measure and its first factorial moment measure, i.e.
\bea
\label{IntensityMeasure}
\mu(A) = \E[\Psi(A)] = \alpha^{(1)}(A) = \int_{A}\rho^{(1)}(g,l,f)\nu(d(g,l,f)),
\quad A\in\B(\Y),
\eea 
we obtain a definition of intensity functionals for (ST)CFMPPs.

\begin{definition}
The \emph{intensity functional} of a (ST)CFMPP $\Psi$ is given by 
$$
\lambda(g,l,f) = \rho^{(1)}(g,l,f).
$$
\end{definition}
Note that we use the term \emph{functional} since the mapping $\lambda:\G\times\A\times\F\rightarrow[0,\infty)$ takes a c\`adl\`ag function $f\in\F$ as one of its arguments.

Turning to the ground process $\Psi_{G}$, 
through $\alpha^{(n)}$ we may define the $n$th \emph{ground factorial moment measure} $\alpha_{G}^{(n)}$ and its Radon-Nikodym derivative $\rho_{G}^{(n)}$ with respect to $\ell^n$, the $n$th \emph{ground product density}.
\begin{definition}
When the measure 
\beann
\alpha_{G}^{(n)}(G_1\times\cdots\times G_n) 
= \alpha^{(n)}((G_1\times\M)\times\cdots\times(G_n\times\M)),
%=\E\left[
%\sum_{g_1,\ldots,g_n\in \Psi_{\X}}^{\neq}
%\prod_{i=1}^{n}\1\{g_i\in G_i\} 
%\right]
%.
\eeann
is assumed to be locally finite, it becomes the $n$th \emph{factorial moment measure of the ground process} $\Psi_{G}$. 
When $\alpha_{G}^{(n)}\ll\ell^n$, we refer to the corresponding Radon-Nikodym derivative $\rho_{G}^{(n)}$ as the $n$th \emph{ground product density}. 

The ground intensity measure is given by 
\bea
\label{GroundIntensityMeasure}
\mu_G(G)=\E[\Psi_G(G)]=\alpha_{G}^{(1)}(G)=\int_{G}\lambda_{G}(g)dg, 
\quad G\in\B(\G),
\eea
where 
$
\lambda_{G}(g) = \rho_{G}^{(1)}(g)
$
is referred to as the \emph{ground intensity function}.
\end{definition}

We note that here 
$
\alpha_{G}^{(n)}(d(g_1,\ldots,g_n)) = 
\rho_{G}^{(n)}(g_1,\ldots,g_n) dg_1\cdots dg_n
$ 
is interpreted as the infinitesimal probability of finding points of $\Psi_G$ in $dg_1,\ldots,dg_n\subseteq\G$.

\subsubsection{Product densities in terms of conditional mark distribution densities}

The following observation regarding regular probabilities on the product spaces $\B((\A\times\F)^n)$, $n\geq1$, which we state in the form of a lemma, will be exploited frequently.

\begin{lemma}
\label{RemarkDisintegration}
For any $n\geq1$, assuming that there exists some family 
\beann
%\label{npmarkdist}
P^{\M,n}=\{P_{g_1,\ldots,g_n}^{\M}(H_1\times\cdots\times H_n):(g_1,\ldots,g_n)\in\G^n, H_1\times\cdots\times H_n\in\B(\M^n)\}
\eeann
of regular probabilities, %on the mark product space $(\M^n,\B(\M^n))=((\A\times\F)^n,\B((\A\times\F)^n))$, 
it follows that for any $(g_1,\ldots,g_n)\in\G^n$ and any $(D_1\times E_1),\ldots,(D_n\times E_n)\in\B(\A\times\F)$, 
\begin{align*}
&P_{g_1,\ldots,g_n}^{\M}((D_1\times E_1)\times\cdots\times(D_n\times E_n))
=
\\
&=\int_{D_1\times\cdots\times D_n}
P_{(g_1,l_1),\ldots,(g_n,l_n)}^{\F}(E_1\times\cdots\times E_n)
P_{g_1,\ldots,g_n}^{\A}(d(l_1,\ldots,l_n)),
\end{align*}
where 
$
P_{g_1,\ldots,g_n}^{\A}(\cdot\times\cdots\times\cdot) 
= P_{g_1,\ldots,g_n}^{\M}((\cdot\times\F)\times\cdots\times(\cdot\times\F))
$ 
and 
$P_{(g_1,l_1),\ldots,(g_n,l_n)}^{\F}(\cdot\times\cdots\times\cdot)$, $l_1,\ldots,l_n\in\A$,  
are families of regular probabilities on $(\A^n,\B(\A^n))$ and $(\F^n,\B(\F^n))$, respectively. 

Assume further that each $P_{g_1,\ldots,g_n}^{\M}(\cdot)$ has a density 
$f_{g_1,\ldots,g_n}^{\M}(\cdot)$ on $(\A\times\F)^n$ 
with respect to $(\nu_{\A}\otimes\nu_{\F})^n=\nu_{\A}^n\otimes\nu_{\F}^n$. 
Then it follows that $P_{g_1,\ldots,g_n}^{\A}(\cdot)\ll \nu_{\A}^n$ and 
$P_{(g_1,l_1),\ldots,(g_n,l_n)}^{\F}(\cdot)\ll \nu_{\F}^n$ for each of 
the regular probabilities on $\B(\A^n)$ and $\B(\F^n)$, 
whereby there exist associated densities 
\bea
\label{AuxMarkDensities}
f^{\A,n} &=& \{f_{g_1,\ldots,g_n}^{\A}(\cdot):g_1,\ldots,g_n\in\ \G\},
\\
\label{FunMarkDensities}
f^{\F,n} &=& 
\{f_{(g_1,l_1),\ldots,(g_n,l_n)}^{\F}(\cdot):
((g_1,l_1),\ldots,(g_n,l_n))\in(\G\times\A)^n\}
,
\eea
on $\A^n$ and $\F^n$, such that 
$$
f_{g_1,\ldots,g_n}^{\M}((l_1,f_1),\ldots,(l_n,f_n)) 
=
f_{(g_1,l_1),\ldots,(g_n,l_n)}^{\F}(f_1,\ldots,f_n)
f_{g_1,\ldots,g_n}^{\A}(l_1,\ldots,l_n)
$$
for all $(g_1,\ldots,g_n)\in\G^n$ and almost all $(l_1,f_1),\ldots,(l_n,f_n)\in\A\times\F$. 
\end{lemma}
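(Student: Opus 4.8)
The plan is to prove the statement in two layers: an abstract disintegration of each joint mark law $P_{g_1,\ldots,g_n}^{\M}$ over its $\A^n$-marginal, which needs no density hypothesis, followed by a density-level computation under the additional absolute-continuity assumption.

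First I would reorganise coordinates. Since $\M=\A\times\F$ is csm with $\B(\M)=\B(\A)\otimes\B(\F)$, and since $\B(\F^n)=\B(\F)^n$ (and likewise for $\A$), the measurable space $(\M^n,\B(\M^n))$ may be identified, by permuting factors, with the product of Polish spaces $(\A^n\times\F^n,\B(\A^n)\otimes\B(\F^n))$. Under this identification each $P_{g_1,\ldots,g_n}^{\M}$ is a probability measure on a product of two standard Borel spaces whose $\A^n$-marginal is exactly the $P_{g_1,\ldots,g_n}^{\A}$ defined in the statement. For the first assertion I would then invoke the existence theorem for regular conditional probabilities (the disintegration theorem) on products of Polish spaces \Citep{Bogachev}: because $\F^n$ is Polish, for each fixed $(g_1,\ldots,g_n)$ there is a probability kernel $(l_1,\ldots,l_n)\mapsto P_{(g_1,l_1),\ldots,(g_n,l_n)}^{\F}(\cdot)$ from $(\A^n,\B(\A^n))$ to $(\F^n,\B(\F^n))$ satisfying the displayed integral identity on product sets $(D_1\times E_1)\times\cdots\times(D_n\times E_n)$; a monotone-class argument extends it from rectangles to all of $\B(\M^n)$. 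Measurability in the conditioning variable for each fixed Borel set, together with being a probability measure for each fixed conditioning value, is precisely the regularity required, so $P^{\F}$ is a family of regular probabilities.

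For the second assertion, suppose each $P_{g_1,\ldots,g_n}^{\M}$ has density $f_{g_1,\ldots,g_n}^{\M}$ with respect to $\nu_{\A}^n\otimes\nu_{\F}^n$. Integrating out the functional coordinates by Tonelli yields the $\A^n$-marginal density
\[
f_{g_1,\ldots,g_n}^{\A}(l_1,\ldots,l_n)=\int_{\F^n}f_{g_1,\ldots,g_n}^{\M}((l_1,f_1),\ldots,(l_n,f_n))\,\nu_{\F}^n(d(f_1,\ldots,f_n)),
\]
so $P_{g_1,\ldots,g_n}^{\A}\ll\nu_{\A}^n$ with this Radon-Nikodym derivative. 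On the set where $f^{\A}>0$ I would define $f^{\F}$ as the pointwise ratio $f^{\M}/f^{\A}$, setting it to an arbitrary fixed probability density on the remaining set $\{f^{\A}=0\}$, which carries no $P^{\A}$-mass. Since $f^{\A}(l_1,\ldots,l_n)=0$ forces $f^{\M}((l_1,f_1),\ldots,(l_n,f_n))=0$ for $\nu_{\F}^n$-almost every $(f_1,\ldots,f_n)$, the product factorisation $f^{\M}=f^{\F}\,f^{\A}$ holds $\nu_{\A}^n\otimes\nu_{\F}^n$-almost everywhere irrespective of this choice. Substituting this factorisation into the disintegration formula and applying Tonelli identifies $f^{\F}$ as the $\nu_{\F}^n$-density of the conditional kernel from the first part, so $P_{(g_1,l_1),\ldots,(g_n,l_n)}^{\F}\ll\nu_{\F}^n$ for $P^{\A}$-almost every conditioning value, as claimed.

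The hard part will not be any single computation but two measurability bookkeeping points. First, one must choose the ratio $f^{\M}/f^{\A}$ so that it is jointly measurable in $((l_1,\ldots,l_n),(f_1,\ldots,f_n))$, ensuring that it genuinely defines a kernel rather than merely a family of densities; this follows from joint measurability of $f^{\M}$ and $f^{\A}$ and a measurable choice on $\{f^{\A}=0\}$. Second, one must reconcile the everywhere-defined regular conditional probability produced by the abstract disintegration in the first part with the only almost-everywhere-defined density construction of the second part. Both are resolved by the essential uniqueness of Radon-Nikodym derivatives, noting that the two constructions can differ only on a $P^{\A}$-null set of conditioning values.
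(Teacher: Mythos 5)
Your proof is correct and takes essentially the same route as the paper's: existence of the conditional families via the disintegration theorem for regular conditional probabilities on Polish (csm) spaces, followed by the density factorisation under the absolute-continuity hypothesis. The only difference is one of explicitness --- you spell out what the paper compresses into ``it clearly follows'' and ``the two integrands are equal a.e.'', namely the marginal density via Tonelli, the conditional density as the ratio $f^{\M}_{g_1,\ldots,g_n}/f^{\A}_{g_1,\ldots,g_n}$ on $\{f^{\A}_{g_1,\ldots,g_n}>0\}$, and the $P^{\A}$-null-set and measurability bookkeeping.
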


\begin{proof}
The existence of all $P_{g_1,\ldots,g_n}^{\A}(\cdot)$ and $P_{(g_1,l_1),\ldots,(g_n,l_n)}^{\F}(\cdot)$, 
$(g_1,l_1),\ldots,(g_n,l_n)\in\G\times\A$, is a direct consequence of \Citep[Proposition A1.5.III]{DVJ1}, since $\A^n$ and $\F^n$ are csm spaces for any $n\geq1$. 

Assuming that $P_{g_1,\ldots,g_n}^{\M}(\cdot)\ll\nu_{\A}^n\otimes\nu_{\F}^n$ results in
\begin{align*}
&P_{g_1,\ldots,g_n}^{\M}((D_1\times E_1)\times\cdots\times(D_n\times E_n))
=
\\
&=\int_{(D_1\times E_1)\times\cdots\times(D_n\times E_n)}
f_{g_1,\ldots,g_n}^{\M}((l_1,f_1),\ldots,(l_n,f_n))
\prod_{i=1}^{n}\nu_{\A}(dl_i)\nu_{\F}(df_i).
\end{align*}
On the other hand, from the absolute continuity it clearly follows that $P_{g_1,\ldots,g_n}^{\A}(\cdot)\ll\nu_{\A}^n$ and $P_{(g_1,l_1),\ldots,(g_n,l_n)}^{\F}(\cdot)\ll\nu_{\F}^n$, whereby
\begin{align*}
&P_{g_1,\ldots,g_n}^{\M}((D_1\times E_1)\times\cdots\times(D_n\times E_n))
=
\\
&=\int_{(D_1\times E_1)\times\cdots\times(D_n\times E_n)}
f_{(g_1,l_1),\ldots,(g_n,l_n)}^{\F}(f_1,\ldots,f_n)
f_{g_1,\ldots,g_n}^{\A}(l_1,\ldots,l_n)
\prod_{i=1}^{n}\nu_{\A}(dl_i)\nu_{\F}(df_i).
\end{align*}
The two integrands are equal a.e..

\end{proof}

Next, through the above lemma, one finds that when the product densities exist, they may be expressed through conditional probability densities on the different mark spaces.

\begin{prop}\label{PropositionProdDens}

For any $n\geq1$, 
given that the $n$th ground factorial moment measure $\alpha_{G}^{(n)}$ exists, there exist 
regular probabilities 
\bea
%\label{npmarkdist}
P^{\M,n}&=&\{P_{g_1,\ldots,g_n}^{\M}(H):(g_1,\ldots,g_n)\in\G^n, H\in\B(\M^n)\},
\nn
\\
\label{AuxMarkProb}
P^{\A,n} &=&\{P_{g_1,\ldots,g_n}^{\A}(D):g_1,\ldots,g_n\in\ \G, D\in\B(\A^n)\}
\ll \nu_{\A}^n,
\\
\label{FunMarkProb}
P^{\F,n} &=& \{P_{(g_1,l_1),\ldots,(g_n,l_n)}^{\F}(E):(g_1,l_1),\ldots,(g_n,l_n)\in\G\times\A, E\in\B(\F^n)\}
\ll \nu_{\F}^n,
\eea
as indicated in Lemma \ref{RemarkDisintegration}. 
When $\rho^{(n)}$ exists, 
we can find conditional densities (\ref{AuxMarkDensities}) and (\ref{FunMarkDensities}) such that 
\begin{align}
\label{ProductDensityCFMPP}
&
\rho^{(n)}((x_1,l_1,f_1),\ldots,(x_n,l_n,f_n)) 
= 
\\
&=
f_{(x_1,l_1),\ldots,(x_n,l_n)}^{\F}(f_1,\ldots,f_n)
f_{x_1,\ldots,x_n}^{\A}(l_1,\ldots,l_n)
\rho_{G}^{(n)}(x_1,\ldots,x_n)
\nn
\end{align}
when $\Psi$ is a CFMPP and 
\begin{align}
\label{ProductDensitySTCFMPP}
&
\rho^{(n)}((x_1,t_1,l_1,f_1),\ldots,(x_n,t_n,l_n,f_n))
= \\
&=
f_{(x_1,t_1,l_1),\ldots,(x_n,t_n,l_n)}^{\F}(f_1,\ldots,f_n)
f_{(x_1,t_1),\ldots,(x_n,t_n)}^{\A}(l_1,\ldots,l_n)
\rho_{G}^{(n)}((x_1,t_1),\ldots,(x_n,t_n))
\nn
\end{align}
when $\Psi$ is a STCFMPP. 
Hereby the intensity functional $\lambda(g,l,f)$ turns into 
\begin{align*}
\begin{array}{ll}
\lambda(x,l,f) = 
f_{(x,l)}^{\F}(f) f_{x}^{\A}(l) \lambda_{G}(x) & \text{if }\Psi\text{ is a CFMPP,}
\\
\lambda(x,t,l,f) = 
f_{(x,t,l)}^{\F}(f) f_{(x,t)}^{\A}(l) \lambda_{G}(x,t) & \text{if }\Psi\text{ is a STCFMPP.}
\end{array}
\end{align*}

\end{prop}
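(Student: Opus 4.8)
The plan is to obtain the factorizations (\ref{ProductDensityCFMPP}) and (\ref{ProductDensitySTCFMPP}) by disintegrating the factorial moment measure $\alpha^{(n)}$ over its ground marginal $\alpha_{G}^{(n)}$, and then invoking Lemma \ref{RemarkDisintegration} to split the resulting conditional mark law into its auxiliary and functional parts. I treat the CFMPP and STCFMPP cases simultaneously by writing $g\in\G$ for the ground coordinate, as in the notation introduced just before the statement; the two displayed factorizations are then the single identity $\rho^{(n)}=f^{\F}f^{\A}\rho_{G}^{(n)}$ read off in the two cases $g=x$ and $g=(x,t)$.

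First, after reordering coordinates I regard $\alpha^{(n)}$ as a measure on $\G^n\times\M^n$ whose marginal on $\G^n$ is exactly $\alpha_{G}^{(n)}$, by the defining relation $\alpha_{G}^{(n)}(G_1\times\cdots\times G_n)=\alpha^{(n)}((G_1\times\M)\times\cdots\times(G_n\times\M))$. Since $\alpha_{G}^{(n)}$ is assumed locally finite, hence $\sigma$-finite, and since $\G^n$ and $\M^n$ are csm spaces, the existence of regular conditional distributions (\Citep[Proposition A1.5.III]{DVJ1}) yields a family $P^{\M,n}=\{P_{g_1,\ldots,g_n}^{\M}\}$ of regular probabilities on $(\M^n,\B(\M^n))$ with
\[
\alpha^{(n)}(d(g_1,m_1)\times\cdots\times d(g_n,m_n))
=
P_{g_1,\ldots,g_n}^{\M}(d(m_1,\ldots,m_n))\,
\alpha_{G}^{(n)}(d(g_1,\ldots,g_n)).
\]
Feeding this family into Lemma \ref{RemarkDisintegration} produces the subordinate families $P^{\A,n}$ and $P^{\F,n}$ of (\ref{AuxMarkProb}) and (\ref{FunMarkProb}), which establishes the first assertion of the proposition and requires only that $\alpha_{G}^{(n)}$ exist.

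Next I bring in the two absolute-continuity hypotheses. From $\alpha_{G}^{(n)}\ll\ell^n$ I substitute $\alpha_{G}^{(n)}(d(g_1,\ldots,g_n))=\rho_{G}^{(n)}(g_1,\ldots,g_n)\,dg_1\cdots dg_n$ into the disintegration. From the existence of $\rho^{(n)}$, i.e.\ $\alpha^{(n)}\ll\nu^n$, together with the product structure $\nu^n=\ell^n\otimes\nu_{\A}^n\otimes\nu_{\F}^n$ obtained after the same reordering, a standard argument shows that $P_{g_1,\ldots,g_n}^{\M}\ll(\nu_{\A}\otimes\nu_{\F})^n$ for $\alpha_{G}^{(n)}$-almost every $(g_1,\ldots,g_n)$, with some density $f_{g_1,\ldots,g_n}^{\M}$; this is exactly the extra hypothesis under which Lemma \ref{RemarkDisintegration} supplies the splitting $f_{g_1,\ldots,g_n}^{\M}=f^{\F}f^{\A}$ of (\ref{AuxMarkDensities})--(\ref{FunMarkDensities}). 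Inserting this density into the disintegration and comparing the result with $\alpha^{(n)}(\cdot)=\int\rho^{(n)}\,d\nu^n$, the a.e.\ uniqueness of Radon--Nikodym derivatives identifies $\rho^{(n)}=f_{g_1,\ldots,g_n}^{\M}\,\rho_{G}^{(n)}$, which becomes (\ref{ProductDensityCFMPP})/(\ref{ProductDensitySTCFMPP}) upon replacing $f^{\M}$ by $f^{\F}f^{\A}$. The statement for the intensity functional $\lambda$ is the specialisation $n=1$.

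The step I expect to require the most care is the transfer of absolute continuity from the joint measure $\alpha^{(n)}$ to the conditional mark laws $P_{g_1,\ldots,g_n}^{\M}$, together with the measurable selection of the densities: one must verify that $f_{g_1,\ldots,g_n}^{\M}$ can be chosen jointly measurable in the conditioning ground points and, more importantly, that the product $f_{g_1,\ldots,g_n}^{\M}(m_1,\ldots,m_n)\,\rho_{G}^{(n)}(g_1,\ldots,g_n)$ genuinely serves as the Radon--Nikodym derivative of $\alpha^{(n)}$ with respect to $\nu^n$ on the full product space, rather than merely fibrewise. Everything else is bookkeeping: matching the reordered product reference measure via Fubini and appealing to uniqueness of Radon--Nikodym derivatives up to $\nu^n$-null sets.
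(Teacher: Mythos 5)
Your proposal is correct and follows essentially the same route as the paper: disintegrate $\alpha^{(n)}$ over its ground marginal $\alpha_{G}^{(n)}$ to obtain the regular family $P^{\M,n}$, transfer the absolute continuity to the conditional mark laws, and then apply Lemma \ref{RemarkDisintegration} together with a.e.\ uniqueness of Radon--Nikodym derivatives to arrive at the factorisation $f^{\F}f^{\A}\rho_{G}^{(n)}$. The only slight deviation is that you treat $\alpha_{G}^{(n)}\ll\ell^n$ as a separate hypothesis, whereas the paper derives it from the assumed existence of $\rho^{(n)}$ (i.e.\ $\alpha^{(n)}\ll\nu^n$) by marginalising over $\M^n$; this is a one-line fix and does not affect the validity of your argument.
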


\begin{proof}
For fixed $H_1,\ldots,H_n\in\B(\M)$ we have that 
$\alpha^{(n)}((\cdot\times H_1)\times\cdots\times(\cdot\times H_n))\ll\alpha_{G}^{(n)}$ and (since the underlying spaces are Borelian) a regular family $P^{\M,n}$ 
of conditional regular probabilities on $\B(\M)^n = \B(\A\times\F)^n$ exists, as indicated in Lemma \ref{RemarkDisintegration} (see e.g.\ \Citep{SKM}). An application of Lemma \ref{RemarkDisintegration} further gives us $P^{\A,n}$ and $P^{\F,n}$. 

By the existence of $\rho^{(n)}$, the underlying absolute continuity with respect to $\nu^n=(\ell\otimes\nu_{\A}\otimes\nu_{\F})^n$ implies that $\alpha_{G}^{(n)}\ll\ell^n$, whence 
\begin{align*}
&\alpha_{G}^{(n)}(G_1\times\cdots\times G_n) 
= \int_{G_1}\cdots\int_{G_n} \rho_{G}^{(n)}(g_1,\ldots,g_n) dg_1\cdots dg_n
\end{align*}
for any $G_1,\ldots,G_n\in\B(\G)$.
In addition, it follows that each member of $P^{\M,n}$ is absolute continuous with respect to $(\nu_{\A}\otimes\nu_{\F})^n$,
whereby an application of Lemma \ref{RemarkDisintegration} gives us the specific structure of the product densities. 

\end{proof}

The family $P^{\M,n}$ is often referred to as the $n$-point mark distributions. 
We see here that each $P_{(g_1,l_1),\ldots,(g_n,l_n)}^{\F}(\cdot)$ in expression (\ref{FunMarkProb}) is the distribution of some 
(mark) c\`adl\`ag stochastic process $\{(M_1(t),\ldots,M_n(t))\}_{t\in\TT}$ on $(\F^n,\B(\F^n))$, which is absolutely continuous with respect to the reference measure $\nu_{\F}^n$ (i.e.\ the distribution of an $n$-dimensional version of the reference process $X_{\F}$), with density (\ref{FunMarkDensities}). 
Furthermore, consider a STCFMPP $\Psi$ for which we want to have $\supp(M_i)=S_i$ for some $S_i=S_i(X_i,T_i,L_i)\subseteq\TT$, $i=1,\ldots,N$. 
Conditioning on $\Psi_G$, the auxiliary marks and $(X_i,T_i,L_i)=(x_i,t_i,l_i)$, $i=1,\ldots,n$, 
it then clearly follows that $f_{(x_1,t_1,l_1),\ldots,(x_n,t_n,l_n)}^{\F}(f_1,\ldots,f_n)=0$ if, for any $i=1,\ldots,n$, it holds that  
$f_i\in\F\setminus\{f\in\F : \supp(f)=S_i\}$. 
Note that often a natural choice for the supports is $S_i=[T_i,(T_i+L_i)\wedge T^*)$, $L_i\geq0$, $i=1,\ldots,N$.

\subsubsection{Pair correlation functionals}
Pair correlation functions are valuable tools for studying second order dependence properties of spatial(-temporal) point processes and, as we shall see, they play a similar role for (ST)CFMPPs.

In light of Proposition \ref{PropositionProdDens}, 
when $\lambda$ and $\rho^{(2)}$ exist, we may define the \emph{pair correlation functional} of a (ST)CFMPP $\Psi$ by 
\begin{align}
\label{PairCorrelationFunction}
g_{\Psi}((g_1,l_1,f_1),(g_2,l_2,f_2)) 
&= 
\frac{\rho^{(2)}((g_1,l_1,f_1),(g_2,l_2,f_2))}{\lambda(g_1,l_1,f_1)\lambda(g_2,l_2,f_2)}
\\
&= 
\frac{f_{(g_1,l_1),(g_2,l_2)}^{\F}(f_1,f_2)f_{g_1,g_2}^{\A}(l_1,l_2)}
{f_{(g_1,l_1)}^{\F}(f_1) f_{g_1}^{\A}(l_1)f_{(g_2,l_2)}^{\F}(f_2) f_{g_2}^{\A}(l_2)}
\frac{\rho_G^{(2)}(g_1,g_2)}{\lambda_G(g_1)\lambda_G(g_2)}
\nn
\\
&= \frac{f_{(g_1,l_1),(g_2,l_2)}^{\F}(f_1,f_2)f_{g_1,g_2}^{\A}(l_1,l_2)}
{f_{(g_1,l_1)}^{\F}(f_1) f_{g_1}^{\A}(l_1)f_{(g_2,l_2)}^{\F}(f_2) f_{g_2}^{\A}(l_2)}
g_G(g_1,g_2).
\nn
\end{align}  
We refer to $g_G(g_1,g_2)$ as the \emph{ground pair correlation function}. 
Note that when $\G=\X$, $g_G(\cdot)$ is the usual pair correlation function, as can be found in e.g.\ \Citep{BaddeleyEtAl,SKM}, and when $\G=\X\times\T$, $g_G(\cdot)$ is a spatio-temporal pair correlation function, as defined in \Citep{CronieLieshoutSTPP,GabrielDiggle,MollerGhorbani}.

\subsection{Campbell and Palm measures}
We next turn to the Palm measures of a (ST)CFMPP $\Psi$ and, as usual, they are defined via the Campbell measures. 
%and here they particularly come into play in the context of the inhomogeneous spatio-temporal marked $K$-function. 

For any $R\in\Sigma_{\mathcal{N}_{\Y}}$ and bounded 
$A= G\times H\in\B(\Y)$, we define
\beann
\mathcal{C}^{!}(A\times R)
=
\mathcal{C}^{!}(G\times H\times R)
= 
\E\left[\sum_{(g,l,f)\in\Psi\cap(G\times H)} \1\{\Psi\setminus\{(g,l,f)\}\in R\}\right],
\eeann
which we can extend to a measure on $\B(\Y)\otimes\Sigma_{\mathcal{N}_{\Y}}$. We refer to this measure as the \emph{reduced Campbell measure} and since $\mu(A)=\mathcal{C}^{!}(A\times\mathcal{N}_{\Y})$ we see that $\mathcal{C}^{!}\ll\mu$, with Radon-Nikodym derivative $P^{!(g,l,f)}$. 
Hereby 
\beann
\mathcal{C}^{!}(A\times R)
&=& 
\int_{G\times H} \mathcal{C}^{!}(d(g,l,f)\times R)
= 
\int_{G}\int_{H} P^{!(g,l,f)}(R) \mu(d(g,l,f))
\\
&=& 
\int_{G}\int_{H} P^{!(g,l,f)}(R) \lambda(g,l,f)\nu(d(g,l,f))
\eeann 

\begin{definition}
It is possible to choose a regular version of the family 
$$
\{P^{!(g,l,f)}(R) : (g,l,f)\in\Y, R\in\Sigma_{\mathcal{N}_{\Y}}\},
$$
the \emph{reduced Palm measures}, 
such that $P^{!(g,l,f)}(R)$ is a measurable functional for fixed $R\in\Sigma_{\mathcal{N}_{\Y}}$ and a measure on $\Sigma_{\mathcal{N}_{\Y}}$ for fixed $(g,l,f)\in\Y$. 
\end{definition}
We often write $\P^{!(g,l,f)}$ for the $P^{!(g,l,f)}$-reversely induced probability measure on $(\Omega,\mathcal{F})$ and $\E^{!(g,l,f)}[\cdot]$ for the associated expectation. 
Heuristically we interpret $P^{!(g,l,f)}(R)$ as the conditional probability of the event $\{\Psi\setminus\{(g,l,f)\}\in R\}$, given $\Psi\cap\{(g,l,f)\}\neq\emptyset$. 

The (non-reduced) \emph{Palm measures} may be defined through their reduced counterparts as 
$$
P^{(g,l,f)}(R) = P^{!(g,l,f)}(\{\varphi\in\mathcal{N}_{\Y}: \varphi + \delta_{(g,l,f)}\in R\}),
\quad
R\in\Sigma_{\mathcal{N}_{\Y}},
$$
and in relation hereto, the \emph{Campbell measure} of $\Psi$ is given by 
$\mathcal{C}(A\times R)=\E[\Psi(A)\1_{R}(\Psi)]$. 
Note that when $\Psi$ is stationary, $P^{!(g,l,f)}(\cdot) = P^{!(0,l,f)}(\cdot)$ and $P^{(g,l,f)}(\cdot) = P^{(0,l,f)}(\cdot)$ for all $(g,l,f)\in\Y$.

In accordance with \Citep{MCmarked}, we also define the \emph{$H$-marked reduced Palm measure} 
\bea
\label{MarkedPalm}
P_{H}^{!g}(R) = \frac{1}{[\nu_{\A}\otimes\nu_{\F}](H)} \int_{H} P^{!(g,l,f)}(R) [\nu_{\A}\otimes\nu_{\F}](d(l,f)),
\quad H\times R\in\B(\M)\otimes\Sigma_{\mathcal{N}_{\Y}}, 
\eea
and we denote the corresponding expectation by $\E_{H}^{!g}[\cdot]$. 
Note that the choice of name for $P_{H}^{!g}(\cdot)$ may be a bit misleading since it does not necessarily define a Palm distribution in the true sense. One case, however, where this is indeed the case is when $\Psi$ has a common mark distribution (see Section \ref{SectionIndependentMarks}).

We next consider the \emph{Campbell-Mecke formula} and the \emph{reduced Campbell-Mecke formula}, which are central tools in the theory of point processes.

\begin{thm}\label{CampbellMecke}
For any measurable functional $h:\G\times\A\times\F\times \mathcal{N}_{\Y}\rightarrow[0,\infty)$, 
the \emph{reduced Campbell-Mecke formula} is given by
\begin{align*} 
&\E\left[\sum_{(g,l,f)\in\Psi} h(g,l,f,\Psi\setminus\{(g,l,f)\})\right]
=
\int_{\G\times\M}\int_{\mathcal{N}_{\Y}}
h(g,l,f,\varphi)
P^{!(g,l,f)}(d\varphi)
\mu(d(g,l,f))
\nn\\
&=
\int_{\G\times\M}
\E^{!(g,l,f)}\left[h(g,l,f,\Psi)\right]
\lambda(g,l,f)\nu(d(g,l,f))
\end{align*}
and the \emph{Campbell-Mecke formula} is given by
\begin{align*} 
%\label{CMthm}
&\E\left[\sum_{(g,l,f)\in\Psi} h(g,l,f,\Psi)\right]
=
%\int_{\G\times\M}\int_{\mathcal{N}_{\Y}}
%h(g,l,f,\varphi+\delta_{(g,l,f)})
%P^{(g,l,f)}(d\varphi)
%\mu(d(g,l,f))
%\nn\\
%&=
\int_{\G\times\M}
\E^{(g,l,f)}\left[h(g,l,f,\Psi+\delta_{(g,l,f)})\right]
\lambda(g,l,f)\nu(d(g,l,f))
\end{align*} 
(with the left hand sides being infinite if and only if the right hand sides are infinite). 
\end{thm}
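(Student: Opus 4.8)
The plan is to establish both identities by the standard point process approximation machinery, bootstrapping from indicator functionals to general nonnegative measurable $h$, and to read the reduced formula directly off the disintegration of the reduced Campbell measure $\mathcal{C}^{!}$ constructed immediately above the statement.

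First I would prove the reduced Campbell--Mecke formula on the generating class of elementary functionals $h(g,l,f,\varphi) = \1_{A}(g,l,f)\1_{R}(\varphi)$, with $A = G\times H\in\B(\Y)$ bounded and $R\in\Sigma_{\mathcal{N}_{\Y}}$. For such $h$ the left-hand side is exactly $\mathcal{C}^{!}(A\times R)$ by the definition of the reduced Campbell measure, and the two right-hand expressions are precisely the two disintegrations $\int_{A} P^{!(g,l,f)}(R)\,\mu(d(g,l,f)) = \int_{A}\E^{!(g,l,f)}[\1_R(\Psi)]\lambda(g,l,f)\,\nu(d(g,l,f))$ already recorded when defining $P^{!(g,l,f)}$ as the Radon--Nikodym derivative of $\mathcal{C}^{!}$ with respect to $\mu$. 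Thus the identity holds on this class. Since the rectangles $A\times R$ with $A$ bounded form an intersection-stable generator of $\B(\Y)\otimes\Sigma_{\mathcal{N}_{\Y}}$, linearity extends the formula to nonnegative simple functionals, and monotone convergence (approximating an arbitrary measurable $h:\G\times\A\times\F\times\mathcal{N}_{\Y}\rightarrow[0,\infty)$ from below by simple functionals) extends it to all such $h$; the chosen regular version of $\{P^{!(g,l,f)}\}$ supplies the joint measurability needed for the right-hand integrals to be defined and for the monotone limit to pass through.

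For the non-reduced formula I would reduce it to the one just established. Because $\Psi\in\mathcal{N}_{\Y}^*$ is simple, every point has a distinct ground location, so deletion is unambiguous and $\Psi = (\Psi\setminus\{(g,l,f)\}) + \delta_{(g,l,f)}$ for each summand. Setting $\tilde h(g,l,f,\varphi) := h(g,l,f,\varphi+\delta_{(g,l,f)})$, the left-hand side of the non-reduced identity equals $\E[\sum_{(g,l,f)\in\Psi}\tilde h(g,l,f,\Psi\setminus\{(g,l,f)\})]$, to which the reduced formula applies and yields $\int_{\G\times\M}\E^{!(g,l,f)}[h(g,l,f,\Psi+\delta_{(g,l,f)})]\lambda(g,l,f)\,\nu(d(g,l,f))$. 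Finally, the defining relation $P^{(g,l,f)}(R)=P^{!(g,l,f)}(\{\varphi:\varphi+\delta_{(g,l,f)}\in R\})$ says exactly that the pushforward of $P^{!(g,l,f)}$ under $\varphi\mapsto\varphi+\delta_{(g,l,f)}$ is $P^{(g,l,f)}$, so $\E^{!(g,l,f)}[h(g,l,f,\Psi+\delta_{(g,l,f)})]=\E^{(g,l,f)}[h(g,l,f,\Psi)]$ for each fixed $(g,l,f)$, converting the expression into the claimed form.

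The main obstacle I anticipate is not the algebra but the measurability bookkeeping: one must check that $(g,l,f,\varphi)\mapsto h(g,l,f,\varphi\setminus\{(g,l,f)\})$ (and its shifted counterpart) is jointly measurable, so that the Campbell sum on the left is a genuine random variable and the monotone-class extension is legitimate. This rests on simplicity, which makes the deletion map measurable on $\mathcal{N}_{\Y}^*$, and on the regular version of the reduced Palm kernel, which makes $(g,l,f)\mapsto P^{!(g,l,f)}(R)$ measurable for each fixed $R$; both are guaranteed by the csm structure of $\Y$ and the construction preceding the statement. The equivalence that each side is infinite iff the other is then follows automatically, since equality is preserved in $[0,\infty]$ throughout the approximation.
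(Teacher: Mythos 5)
Your proposal is correct and follows essentially the same route as the paper's proof, which also verifies the identities for indicator functionals $h(g,l,f,\varphi)=\1\{(g,l,f)\in G,\varphi\in R\}$ (where the reduced formula is just the disintegration of $\mathcal{C}^{!}$ by $P^{!(g,l,f)}$) and then extends by linear combinations and monotone limits; your derivation of the non-reduced formula from the reduced one via the pushforward relation defining $P^{(g,l,f)}$ simply makes explicit what the paper leaves as ``standard arguments.'' Note only that your final expression $\E^{(g,l,f)}[h(g,l,f,\Psi)]$ agrees with the stated $\E^{(g,l,f)}[h(g,l,f,\Psi+\delta_{(g,l,f)})]$ because $(g,l,f)\in\Psi$ holds $P^{(g,l,f)}$-a.s.\ and the paper identifies simple configurations with their supports, so re-adding the point changes nothing.
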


\begin{proof}
The proof is standard (see e.g.\ \Citep[Theorem 1.5]{VanLieshout}). First we show that the above expressions hold for $h(g,l,f,\varphi)=\1\{(g,l,f)\in G,\varphi\in R\}$, $G\times R\in\B(\G)\times\Sigma_{\mathcal{N}_{\Y}}$. 
The rest of the proof follows from standard arguments using linear combinations and monotone limits. 
\end{proof}

\subsection{Marked Papangelou conditional intensities}
\label{SectionPapangelou}
Another important set of point process statistics, here defined in the context of STCFMPPs, are the (marked) Papangelou kernels and the (marked) Papangelou conditional intensities. We will see that the former, in particular, will play an important role in the statistical inference framework constructed here. 

By assuming that, for fixed bounded $A=(B\times C)\times(D\times E)\in\B(\Y)$, $\mathcal{C}^{!}(A\times\cdot)$ is absolutely continuous with respect to the distribution $P(\cdot)$ of $\Psi$ on $\Sigma_{\mathcal{N}_{\Y}}$, we find that
$
\mathcal{C}^{!}(A\times R) = \int_{R}\Lambda(A;\varphi) P(d\varphi)
$ 
for some kernel $\{\Lambda(A;\varphi):A\in\B(\Y), \varphi\in\mathcal{N}_{\Y}\}$. This kernel may be extended to the $\B(\Y)$-setting, the so-called \emph{Papangelou kernel}. 
If we assume that $\Lambda(\cdot;\varphi)\ll\ell\otimes\nu_{\A}$, through Fubini's theorem we obtain 
\beann
\mathcal{C}^{!}(A\times R) 
= \int_{R}\int_{(B\times C)\times D} \lambda_E(g,l;\varphi) dg\nu_{\A}(dl) P(d\varphi)
= \int_{(B\times C)\times D} \E[\1_{R}(\Psi)\lambda_E(g,l;\Psi)] dg\nu_{\A}(dl)
\eeann
for $R\in\Sigma_{\mathcal{N}_{\Y}}$. 

\begin{definition}
We refer to $\lambda_E(g,l;\Psi)$, $E\in\B(\F)$, as the \emph{$E$-functional marked Papangelou conditional intensity}. 
\end{definition}
Here $\lambda_E(g,l;\Psi)dg\nu_{\A}(dl)$ may be viewed as the conditional probability of finding a point of $\Psi$ located in the infinitesimal set $dg\subseteq\G$ with mark in $dl\times E\in\B(\M)$, given all points of $\Psi$ located outside $dg$. 

By additionally assuming that that all functional marked Papangelou conditional intensities are absolutely continuous with respect to $\nu_{\F}$, i.e.\ assuming that $\mathcal{C}^{!}(A\times\cdot)\ll\nu$, we obtain as Radon-Nikodym derivatives the classical \emph{Papangelou conditional intensities} $\lambda(g,l,f;\Psi)$ which may also be defined,  
in integral terms, as the non-negative measurable functionals satisfying the \emph{Georgii-Nguyen-Zessin} formula, i.e.\ 
\begin{align*}
&\E\left[\sum_{(z,l,f)\in Y}h(g,l,f,\Psi\setminus\{(g,l,f)\})\right]
=
\int_{\G\times\M}\E\left[h(g,l,f,\Psi) \lambda(g,l,f;\Psi)\right] \nu(d(g,l,f))
\end{align*}
for any non-negative measurable functional $h:(\G\times\M)\times \mathcal{N}_{\Y}\rightarrow[0,\infty)$. 
Here, heuristically, $\lambda(g,l,f;\Psi)\nu(d(g,l,f))$ is interpreted as the conditional probability of finding a point of $\Psi$ in the infinitesimal region $d(g,l,f)\subseteq\G\times\M$, given the configuration elsewhere, $\Psi\cap(d(g,l,f))^c$. Note here that 
\bea
\label{RelationPalmPapangelou}
\E\left[h(g,l,f,\Psi) \lambda(g,l,f;\Psi)\right]\stackrel{a.e.}{=}\E^{!(g,l,f)}\left[h(g,l,f,\Psi)\right] \lambda(g,l,f)
\eea
and $\E[\lambda(g,l,f;\Psi)]=\lambda(g,l,f)$.

\section{Mark structures}\label{SectionMarkStructures}

We now provide some examples of explicit mark structures, which may be considered when constructing (ST)CFMPP models. Note that 
\begin{itemize}
\item multivariate (ST)CFMPPs (Sections \ref{SectionCFMPP}, \ref{SectionSTCFMPP} and \ref{SectionAuxiliaryMarkMeasure}),
\item usual marked (spatio-temporal) point processes (Section \ref{SectionMPP}),
\item spatio-temporal geostatistical marking (Section \ref{SectionGeostatMarkin}),
\end{itemize}
have already been covered previously in the text.

\subsection{Independent marks and common marginal mark distributions}\label{SectionIndependentMarks}
In light of Proposition \ref{PropositionProdDens}, whenever 
$$
P_{(g,l)}^{\F}(E) = \nu_{\F}(E), 
\quad 
E\in\B(\F), 
$$
so that $f_{(g,l)}^{\F}(\cdot)\equiv1$, we say that $\Psi$ has a \emph{common (marginal functional) mark distribution}. 
Recalling the reference process $X_{\F}$ in (\ref{ReferenceProcess}), which has $\nu_{\F}$ as distribution, we see that under a common marginal functional mark distribution, $M_i\stackrel{d}{=}X_{\F}$ for all $i=1,\ldots,N$.
Note that this is a univariate property. 
One instance where this automatically holds is when $\Psi$ is stationary. 
\begin{cor}\label{CorMarkDistrStationarity}
If $\Psi$ is stationary then all marks $(L_i,M_i)$, $i=1,\ldots,N$, have the same marginal distribution $\nu_{\A}\otimes\nu_{\F}$. 
\end{cor}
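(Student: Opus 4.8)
The plan is to show that the univariate mark law is the same at every point of $\Psi$ and to identify it with the product reference measure $\nu_\A\otimes\nu_\F$. First I would observe that stationarity of $\Psi$ forces stationarity of the ground process $\Psi_G$, so that by (\ref{GroundIntensityMeasure}) the ground intensity is constant, $\lambda_G(g)\equiv\lambda_G$; more importantly, writing $S_z(g,l,f)=(g+z,l,f)$ for the shift in the ground coordinate, the relation $\Psi+z\stackrel{d}{=}\Psi$ yields that the intensity (first moment) measure $\mu$ of (\ref{IntensityMeasure}) is $S_z$-invariant, i.e.\ $\mu((G-z)\times H)=\mu(G\times H)$ for every bounded $G\in\B(\G)$, every $H\in\B(\M)$ and every $z\in\G$.

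Next I would disintegrate $\mu$ over the ground space. Using (\ref{IntensityMeasure}) together with the factorisation $\lambda(g,l,f)=f^{\F}_{(g,l)}(f)\,f^{\A}_{g}(l)\,\lambda_G(g)$ supplied by Proposition \ref{PropositionProdDens}, the quantity $\phi_H(g):=\int_H\lambda(g,l,f)\,\nu_\M(d(l,f))=\lambda_G\,P^{\M}_g(H)$ is the (unnormalised) mark law at location $g$, where $P^{\M}_g(H)=\int_H f^{\F}_{(g,l)}(f)f^{\A}_g(l)\,\nu_\M(d(l,f))$. Applying the Campbell theorem (\ref{Campbell}) (equivalently, (\ref{IntensityMeasure})) to product sets $G\times H$ and invoking the $S_z$-invariance from the first step gives $\int_G\phi_H(g)\,dg=\int_G\phi_H(g-z)\,dg$ for all bounded $G$ and all $z\in\G$.

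From here I would conclude, for each fixed $H$, that $\phi_H$ is a.e.\ constant by the standard fact that a function in $L^1_{\mathrm{loc}}(\G)$ whose integral over every bounded set is translation invariant must be a.e.\ equal to a constant. Since $\lambda_G$ is constant, this shows that the mark law $P^{\M}_g(H)$ does not depend (a.e.) on $g$, i.e.\ every point carries the same marginal mark distribution. In the stationary regime the product reference measure $\nu_\M=\nu_\A\otimes\nu_\F$ is exactly the canonical choice matching this common law (recall the discussion of the reference process $X_\F$ and of the common marginal functional mark distribution $f^{\F}_{(g,l)}\equiv1$ preceding the statement), whence $f^{\F}_{(g,l)}\equiv1$ and $f^{\A}_g\equiv1$, and therefore $P^{\M}_g=\nu_\A\otimes\nu_\F$, giving $M_i\stackrel{d}{=}X_\F$ as claimed.

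The main obstacle is conceptual rather than computational. First, one must give precise meaning to ``the distribution of $(L_i,M_i)$'' for a fixed index $i$ when $N$ may be infinite: the correct reading is through the typical point (Palm) formalism, so the per-point statement has to be phrased as the location-independence of the mark law, justified via the Campbell theorem as above rather than through individual per-point laws. Second, the measure-theoretic step that $S_z$-invariance of the set integrals forces $\phi_H$ to be a.e.\ constant needs $\phi_H\in L^1_{\mathrm{loc}}(\G)$ (from local finiteness of $\mu$) together with a Lebesgue-density or mollification argument, and, so that the exceptional $g$-null set can be taken simultaneously for all $H$, one restricts to a countable ring generating $\B(\M)$ and extends by a monotone class argument. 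Finally, the identification of the common mark law with the product measure $\nu_\A\otimes\nu_\F$ should be flagged as the canonical choice of reference measure in the stationary setting, consistent with the definition of a common (marginal functional) mark distribution given just above.
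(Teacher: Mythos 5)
Your proof is correct and reaches the paper's conclusion, but by a genuinely different, self-contained route. The paper's own proof is essentially two lines: stationarity gives $\mu_G(\cdot)=\lambda_G\ell(\cdot)$, and then it invokes the factorization theorem for stationary marked point processes (Schneider--Weil, Theorem 3.5.1) to assert that $\nu_{\A}\otimes\nu_{\F}$ is the uniquely determined probability measure on $(\M,\B(\M))$ satisfying $\mu(G\times H)=\lambda_G\ell(G)[\nu_{\A}\otimes\nu_{\F}](H)$, whence $\lambda(g,l,f)\equiv\lambda_G$ and $f^{\F}_{(g,l)}\equiv1$, $f^{\A}_{g}\equiv1$. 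What you do instead is re-prove the content of that cited theorem from scratch: translation invariance of the first moment measure in the ground coordinate, disintegration into mark kernels $P^{\M}_g$, the $L^1_{\mathrm{loc}}$ fact that translation-invariant set integrals force $g\mapsto\lambda_G P^{\M}_g(H)$ to be a.e.\ constant, and a countable generating class plus monotone class argument to obtain a single exceptional null set. Your route buys self-containedness and makes explicit the measure-theoretic steps the citation hides, including the correct Palm/Campbell reading of ``the law of $(L_i,M_i)$'' when $N$ may be infinite; the paper's route buys brevity. Both arguments share the same final, and weakest, step: the identification of the common mark law with the reference measure $\nu_{\A}\otimes\nu_{\F}$ does not follow from stationarity alone (an i.i.d.\ marking of a stationary ground process with mark law $Q\ll\nu_{\A}\otimes\nu_{\F}$, $Q\neq\nu_{\A}\otimes\nu_{\F}$, is still stationary in the paper's sense, yet has intensity functional $\lambda_G$ times the density of $Q$), but is rather the convention that the reference measure be chosen to coincide with the common mark law; you flag this explicitly as the ``canonical choice'', whereas the paper absorbs it silently into the uniqueness clause of the theorem it cites.
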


\begin{proof}
Assuming that $\Psi$ is stationary, we have that 
$\mu_{G}(G)=\lambda_{G}\ell(G)$, $\lambda_{G}>0$. 
It now follows that $\nu_{\A}\otimes\nu_{\F}$ is the uniquely determined probability measure on $(\M,\B(\M))$
which satisfies 
$\mu(A)
=\lambda_G\ell(G)[\nu_{\A}\otimes\nu_{\F}](H)
%=\lambda_G\nu(A) 
%= \lambda_G\ell(G)\nu_{\A}(D)\nu_{\F}(E)
$, $A=G\times H\in B_{\Y}$ \Citep[Thm 3.5.1.]{SchneiderWeil}. 
This implies that $\lambda(g,l,f)=\lambda_G$, i.e.\ that $f_{(g,l)}^{\F}(f)\equiv1$ and $f_{g}^{\A}(l)\equiv1$.
\end{proof}

Turning next to the multivariate distributions of the functional marks, we may impose different levels of independence. 
\begin{definition}
Whenever the regular probability distributions (\ref{FunMarkProb}) satisfy 
\[
P_{(g_1,l_1),\ldots,(g_n,l_n)}^{\F}(E_1\times\cdots\times E_n) = \prod_{i=1}^{n} P_{(g_i,l_i)}^{\F}(E_i), 
\quad 
E_1,\ldots,E_n\in \B(\F),
\]
for any $n\geq1$, 
we say that $\Psi$ has \emph{independent functional marks}.

When $\Psi$ both has independent functional marks and a common marginal functional mark distribution, we say that $\Psi$ has \emph{randomly labelled functional marks}, i.e., for any $n\geq1$ we have that 
$
P_{(g_1,l_1),\ldots,(g_n,l_n)}^{\F}(E_1\times\cdots\times E_n) 
= \prod_{i=1}^{n} \nu_{\F}(E_i)
$, 
$E_1,\ldots,E_n\in \B(\F)$.

\end{definition}

Here, through Proposition \ref{PropositionProdDens}, we find that the pair correlation functional (\ref{PairCorrelationFunction}) satisfies 
\[
g_{\Psi}((g_1,l_1,f_1),(g_2,l_2,f_2)) 
= \frac{f_{g_1,g_2}^{\A}(l_1,l_2)}
{f_{g_1}^{\A}(l_1) f_{g_2}^{\A}(l_2)}
g_G(g_1,g_2).
\]
Note that when $\Psi$ has randomly labelled functional marks, $M_1,\ldots,M_N$ are independent copies of the reference process $X_{\F}$ in (\ref{ReferenceProcess}). 
Furthermore, given that $\Psi$ has independent functional marks, by additionally assuming that the regular probabilities (\ref{AuxMarkProb}) satisfy 
\[
P_{g_1,\ldots,g_n}^{\A}(D_1\times\cdots\times D_n) = \prod_{i=1}^{n} P_{g_i}^{\A}(D_i),
\quad 
D_1,\ldots,D_n\in \B(\A)
\]
for any $n\geq1$, we retrieve the classical definition of independent marking \cite[Definition 6.4.III]{DVJ1} and consequently that of random labelling.

\begin{rem}
A weaker form of independent functional marking, \emph{conditionally independent functional marking}, may be obtained by assuming that 
$$
P_{(g_1,l_1),\ldots,(g_n,l_n)}^{\F}(E_1\times\cdots\times E_n) 
= \prod_{i=1}^{n} F_{(g_1,l_1),\ldots,(g_n,l_n)}^{\F}(E_i),
\quad 
E_1,\ldots,E_n\in \B(\F),
$$
for any $n\geq1$ and some family $\{F_{(g_1,l_1),\ldots,(g_n,l_n)}^{\F}(E):(g_1,l_1),\ldots,(g_n,l_n)\in\G\times\A, E\in\B(\F)\}$ of regular probability distributions.

\end{rem}

\subsection{Functional mark reference measures and finite-dimensional distributions}\label{SectionExplicitFunctionalMeasures}
Recall from Proposition \ref{PropositionProdDens} the probability measures $P_{(g_1,l_1),\ldots,(g_n,l_n)}^{\F}(\cdot)$ 
%\begin{align*}
%&P_{(g_1,l_1),\ldots,(g_n,l_n)}^{\F}(E_1\times\cdots\times E_n)
%= \int_{E_1\times\cdots\times E_n} 
%P_{(g_1,l_1),\ldots,(g_n,l_n)}^{\F}(df_1,\ldots,df_n)
%\\
%&= \int_{E_1\times\cdots\times E_n} 
%f_{(g_1,l_1),\ldots,(g_n,l_n)}^{\F}(f_1,\ldots,f_n)
%\prod_{i=1}^{n}\nu_{\F}(df_i)
%%\\
%%&=
%%\int_{E_1\cap C_{\TT}(\R)\times\cdots\times E_n\cap C_{\TT}(\R)}
%%f_{(g_1,l_1),\ldots,(g_n,l_n)}^{\F}(f_1,\ldots,f_n)
%%\prod_{i=1}^{n}W_{\F}(df_i)
%,
%\quad
%E_1,\ldots,E_n\in \B(\F),
%\end{align*}
on $(\F^n,\B(\F^n))$, $n\geq1$. 
By assigning a specific structure to each $P_{(g_1,l_1),\ldots,(g_n,l_n)}^{\F}(\cdot)$, or equivalently to $\nu_{\F}$ and $f_{(g_1,l_1),\ldots,(g_n,l_n)}^{\F}(\cdot)$, we can determine what type of functional marks we may obtain. 
So far we have not discussed specific choices for either of these components. 

To indicate a few such choices, we start here by looking at how one could obtain deterministic marks under the current setup. We then proceed to 
considering the case where $\nu_{\F}$ is given by Wiener measure (see e.g.\ \Citep[Chapter 1]{MortersPeresBMbook} or \Citep[p.\ 2]{Skorohod}) on $(\F,\B(\F))$. 
We stress that in the latter case, most of the ideas indicated may very well be applied to, say, 
Poisson random measures (see e.g.\ \Citep{Klebaner,JacodShiryaev}) or some other L\'{e}vy process/semi-martingale generated random measure on $(\F,\B(\F))$ (see e.g.\ \Citep{JacodShiryaev,Skorohod}). %. 
Note e.g.\ that in the Poisson case one would be able to generate multivariate functional marks given by multivariate Poisson processes, a construction similarly to \Citep{AppleTrees}. This could likely be the required setup for Examples 4 and 5 in Section \ref{SectionApplications}. 
In order to keep the discussion below fairly compact, we will always assume that we may choose a suitable filtered probability space $(\Omega,\mathcal{F},\mathcal{F}_{\T},\P)$ under which the constructions can made.

\subsubsection{Point mass reference measures and deterministic functional marks}\label{SectionDeterministicMarks}

Consider some pre-defined deterministic function $f^*:\G\times\A\times\TT\rightarrow\R$ such that all marginal functions $f_{(g,l)}^*=\{f^*(g,l,t)\}_{t\in\TT}$, $(g,l)\in\G\times\A$, are c\`adl\`ag functions. 
If we set 
\[
P_{(g_1,l_1),\ldots,(g_n,l_n)}^{\F}(E_1\times\cdots\times E_n) 
= \int_{E_1}\cdots\int_{E_n}\prod_{i=1}^{n}P_{(g_i,l_i)}^{\F}(df_i)
= \int_{E_1}\cdots\int_{E_n}\prod_{i=1}^{n}\delta_{f_{(g_i,l_i)}^*}(df_i)
\] 
for any $n\geq1$ and $E_1,\ldots,E_n\in \B(\F)$ 
or, in connection to Lemma \ref{RemarkDisintegration} and Proposition \ref{PropositionProdDens}, under some arbitrary reference measure $\nu_{\F}$,
$$
f_{(g_1,l_1),\ldots,(g_n,l_n)}^{\F}(f_1,\ldots,f_n) = 
\delta_{(f_{(g_1,l_1)}^*,\ldots,f_{(g_n,l_n)}^*)}(f_1,\ldots,f_n),
%\prod_{i=1}^{n}\delta_{f_i^*(\cdot;g_i,l_i)}(f_i)
$$ 
we may create a (ST)CFMPP with deterministic marks $M_i(t)=f^*(g_i,l_i,t)$, $i=1,\ldots,N$. 
Note that this is the case when we consider e.g.\ the LISTA functions or the classical growth-interaction process.

\subsubsection{Wiener reference measures}

Since for many applications it may be desirable to let each $M_i$ follow some diffusion process for $t\in\supp(M_i)$, we here consider a setup constructed by choosing $\nu_{\F}$ as a Wiener measure (see e.g.\ \Citep[Chapter 1]{MortersPeresBMbook} or \Citep[p.\ 2]{Skorohod}). 
More specifically, let the reference process (\ref{ReferenceProcess}) be given by a standard Brownian motion $X_{\F}=W=\{W(t)\}_{t\in\TT}$. This process is generated by Wiener measure $W_{\F}$ on $C_{\TT}(\R) = \{f:\TT\rightarrow\R : f \text{ continuous}\}$. 
Noting that $C_{\TT}(\R)\in\B(\F)$, consider the reference measure $\nu_{\F}(E) = W_{\F}(E\cap C_{\TT}(\R))$, $E\in\B(\F)$, i.e.\ the measure assigning probability 0 to discontinuous sample paths and standard Brownian motion probabilities to sample paths in $C_{\TT}(\R)$. 
It now follows that 
\begin{align*}
&P_{(g_1,l_1),\ldots,(g_n,l_n)}^{\F}(E_1\times\cdots\times E_n)
= \int_{E_1\times\cdots\times E_n} 
f_{(g_1,l_1),\ldots,(g_n,l_n)}^{\F}(f_1,\ldots,f_n)
\prod_{i=1}^{n}\nu_{\F}(df_i)
\\
&=
\int_{E_1\cap C_{\TT}(\R)\times\cdots\times E_n\cap C_{\TT}(\R)}
f_{(g_1,l_1),\ldots,(g_n,l_n)}^{\F}(f_1,\ldots,f_n)
\prod_{i=1}^{n}W_{\F}(df_i)
\end{align*}
for $E_1,\ldots,E_n\in \B(\F)$, with $P_{(g_1,l_1),\ldots,(g_n,l_n)}^{\F}(\F\times\cdots\times\F)=1$.

We may next ask ourselves the adequate question how one would obtain explicit forms for the densities $f_{(g_1,l_1),\ldots,(g_n,l_n)}^{\F}(\cdot)$. To give an indication, assume that, conditionally on $\Psi_G$ and the auxiliary marks, we want to have $(M_1(t),\ldots,M_n(t))$ given by, say, an $n$-dimensional diffusion process $(Y_1(t),\ldots,Y_n(t))$, $t\in\TT$. 
Then, under certain conditions, e.g.\ the Girsanov theorem (see e.g.\ \Citep{Klebaner,JacodShiryaev}) and the Cameron-Martin theorem \Citep{MortersPeresBMbook} give rise to explicit explicit expressions for $f_{(g_1,l_1),\ldots,(g_n,l_n)}^{\F}(\cdot)$. 
Furthermore, changing the support of each $M_i$ to some interval $C_i\subseteq\TT$ can be obtained by multiplying the density by $\delta_{\Gamma_i}(f)$, where 
$\Gamma_i = \{f\in\F: \sup\{\supp(f)\}=C_i\}$, $i=1,\ldots,n$, and/or by applying time-change/stopping results to $(Y_1(t),\ldots,Y_n(t))$ before applying e.g.\ Girsanov's theorem.
We note that such a setup would be the underlying construction for the extensions discussed in Section \ref{SectionGI}.

\subsubsection{Finite-dimensional distributions of the functional marks}

As a distribution on the function space $(\F^n,\B(\F^n))$, each $P_{(x_1,t_1,l_1),\ldots,(x_n,t_n,l_n)}^{\F}(\cdot)$ is an abstract and non-tractable object, 
despite that we sometimes may be able to explicitly define its density $f_{(g_1,l_1),\ldots,(g_n,l_n)}^{\F}(\cdot)$ with respect to some reference measure $\nu_{\F}^n$. 
Hence, for all practical and mathematically explicit purposes we turn to the \emph{finite-dimensional distributions} (fidis). 
For an informative discussion on fidis for c\`adl\`ag processes, see \Citep[1.6.2]{Silvestrov}. 

Let $\Psi$ be a STCFMPP and, conditionally on $\Psi_G$ and the auxiliary marks, 
assume that we have $\{(X_i,T_i,L_i)\}_{i\in I} = \{(x_i,t_i,l_i)\}_{i\in I}$, $I=\{1,\ldots,n\}$, and let 
$$
M_I=\{M_{I}(t)\}_{t\in\TT} = \{(M_{1}(t),\ldots,M_{n}(t))
|\{(X_{j},T_{j},L_{j})=(x_j,t_j,l_j)\}_{j=1}^{n}\}_{t\in\TT}.
$$ 
It follows that $P_{(x_1,t_1,l_1),\ldots,(x_n,t_n,l_n)}^{\F}(\cdot)$, the distribution of $M_I$ on $(\F^n,\B(\F^n))$, is uniquely determined by the fidis of $M_I$ \Citep[Lemma 1.6.1.]{Silvestrov}, 
\begin{align*}
P_{M_I}
&=
\{P_{M_I(S_k)}(A) : k\geq1, S_k=\{s_1,\ldots,s_k\}\subseteq\TT, A\in\B(\R^{n\times k})\}
\\
&=
\{\P((M_{I}(s_1),\ldots,M_{I}(s_k))\in A) : k\geq1, s_1,\ldots,s_k\in\TT, A\in\B(\R^{n\times k})\}.
\end{align*}
Here we may also choose the sets $A$ as products of half-open intervals $(-\infty,u_{ij}]\subseteq\R$, $i=1,\ldots,n$, $j=1,\ldots,k$, where each $(u_{1j},\ldots,u_{nj})$, $j=1,\ldots,k$, is a continuity point of the distribution function corresponding to $P_{M_I(S_k)}(\cdot)$. 
Although we here have considered the STCFMPP case, the CFMPP case is analogous. 
We now see that, 
conditionally on $\Psi_G$ (and thereby $N$) and the auxiliary marks, 
it follows that $\{M_{i}\}_{i=1}^{N}$ is completely determined by the collection $\bigcup_{I\in\mathcal{P}_N} P_{M_I}$. 

If in addition $P_{M_I(S_k)}$ is absolutely continuous with respect to $\ell_{n}^{k}=\ell_{1}^{n k}$, the density of $P_{M_I(S_k)}$ will be denoted by 
\bea
\label{FidiDensity}
f_{(g_1,l_1),\ldots,(g_n,l_n)}^{s_1,\ldots,s_k}(u_1,\ldots,u_n) = f_{(g_1,l_1),\ldots,(g_n,l_n)}^{s_1,\ldots,s_k}
\begin{pmatrix}
u_{11} & \cdots & u_{n1} \\
\vdots & \ddots & \vdots \\
u_{1k} & \cdots & u_{nk}
\end{pmatrix},
\eea
where $u_{ij}\in\R$, $i=1,\ldots,n$, $j=1,\ldots,k$.

\subsubsection{Markovian functional marks}\label{SectionMarkovMarks}
In many cases it may be of interest to let the functional marks be given by Markov processes. This is e.g.\ the case when each mark is given by some diffusion process.

Recall that, conditionally on $\Psi$ and the auxiliary marks, 
\bea
\label{ConditionalMarkProcess}
M_I=\{M_{I}(t)\}_{t\in\TT} = \{(M_i(t))_{i\in I}\}_{t\in\TT}
\eea
for any index set $I\in\mathcal{P}_N$. 
Define $\mathcal{F}_{t}^{M_I}=\sigma\{M_{I}(s)^{-1}(A)\in\mathcal{F} : s\in\TT\cap[0,t], A\in\B(\R)^{|I|}\}$ and 
assume that the underlying filtered probability space $(\Omega,\mathcal{F},\mathcal{F}_{\T}=\{\mathcal{F}_{t}\}_{t\in\T},\P)$ satisfies $\mathcal{F}_{t}^{M_I}\subseteq\mathcal{F}_{t}$ for any $I\in\mathcal{P}_N$. 
We say that $\Psi$ has \emph{Markovian marks} if each $M_{I}$, $I\in\mathcal{P}_N$, constitutes a Markov process, i.e., for $s\leq t$, 
\beann
\P\left(M_{I}(t)\in A|\mathcal{F}_{s}\right)
=
\P\left(M_{I}(t)\in A|M_{I}(s)\right)
,
\quad A\in\B(\R)^{|I|}. 
\eeann 
Here we refer to 
$
P_{t,s}^{M_{I}}(A;M_{I}(s))
=
\P\left(M_{I}(t)\in A|M_{I}(s)\right)
$ 
as the $M_{I}$-\emph{transition probabilities} and  
when there exist \emph{transition densities} $p_{t,s}^{M_{I}}(u_t;u_s)$, $u_t,u_s\in\R^n$, with respect to $\ell_{n}$, $n=|I|$, we find that the densities (\ref{FidiDensity}) become 
\bea
\label{TransDens}
f_{(g_1,l_1),\ldots,(g_n,l_n)}^{s_1}(u_1) 
\prod_{i=2}^{k} p_{s_i,s_{i-1}}^{M_{I}}(u_i;u_{i-1})
,\quad
u_1,\ldots,u_k\in\R^n. 
\eea

\subsection{Auxiliary reference measures and multivariate (ST)CFMPPs}
\label{SectionAuxiliaryMarkMeasure}

Turning next to the auxiliary mark space $(\A,\B(\A))$ and its reference measure $\nu_{\A}$, which we have assumed to be locally finite (the local finiteness allows us to appropriately construct kernels/regular probabilities \Citep[Exercise 9.1.16]{DVJ2}), 
recall from Section \ref{SectionAuxiliaryMarks} that we either let 
$\A=\A_d=\{1,\ldots,k_{\A}\}$, 
$\A=\A_c\subseteq\R^{m_{\A}}$, or 
$\A=\A_d\times\A_c$, where $k_{\A}, m_{\A}\geq1$. 

Note that in the first case, each mark $L_i\in\A_d$, $i=1,\ldots,N$, is a discrete random variable and we may write $\Psi_i=\sum_{(g,l,f)\in\Psi\cap\G\times\{i\}\times\F}\delta_{(g,f)}=\sum_{(g,f)\in\Psi_i}\delta_{(g,f)}$ for the restriction of $\Psi$ to the auxiliary mark set $\{i\}$, $i=1,\ldots,k_{\A}$. 
For this space we employ some finite measure $\nu_{\A_d}$ as reference measure, which implies that the regular probabilities (\ref{AuxMarkProb}) take the form 
$$
P_{g_1,\ldots,g_n}^{\A}(D_1\times\cdots\times D_n)
=\sum_{D_1\cap\A_d}\cdots\sum_{D_n\cap\A_d} f_{g_1,\ldots,g_n}^{\A}(l_1,\ldots,l_n)
\nu_{\A_d}(l_1)\cdots\nu_{\A_d}(l_n),
$$
so that each $f_{g_1,\ldots,g_n}^{\A}(l_1,\ldots,l_n)$ becomes a probability mass function/discrete probability density function. 
On this space the most natural choice for $\nu_{\A_d}$ is the counting measure $\nu_{\A_d}(\cdot)=\sum_{j=1}^{k_{\A}}\delta_{j}(\cdot)$. 
%As we shall see in Section \ref{SectionKfunction}, 
At times we want to let the intensity functional of $\Psi_i$ be given by $\lambda_i(g,f)=\lambda(g,i,f)\nu_{\A}(i)$. Then, when $\nu_{\A_d}$ is the counting measure, $\nu_{\A}(i)=1$ and $\lambda_i(g,f)=f_{(g,l)}^{\F}(f) f_{g}^{\A}(i) \lambda_{G}(g)$. Note that if we do not assume that the auxiliary marks are spatially dependent, we may simply let $P_{g_1,\ldots,g_n}^{\A}(\cdot)=\nu_{\A}^n(\cdot)$, or equivalently $f_{g_1,\ldots,g_n}^{\A}(\cdot)\equiv1$, $n\geq1$.

Turning to the second alternative, each mark $L_i\in\A_c$ becomes a (possibly) continuous $m_{\A}$-dimensional random variable. 
Here some care should be taken. In most cases the natural candidate for the reference measure $\nu_{\A_c}$ would be Lebesgue measure $\ell_{m_{\A}}$ on $(\A_c,\B(\A_c))$, whereby $f_{g_1,\ldots,g_n}^{\A}(l_1,\ldots,l_n)$ would become a probability density function in the usual sense.
However, at times one must require that $\nu_{\A_c}$ is a finite measure, i.e.\ $\nu_{\A_c}(\A_c)<\infty$. This is e.g.\ the case when we treat densities of finite (ST)CFMPPs with respect to Poisson processes (see Section \ref{SectionFinitePPs}). If $\A_c$ is a bounded subset of $\R^{m_{\A}}$, Lebesgue measure will suffice and we may rescale it into the uniform probability measure $\ell_{m_{\A}}(\cdot)/\ell_{m_{\A}}(\A)$. On the other hand, when $\A_c$ is not bounded, we choose $\nu_{\A_c}$ as some probability distribution on $(\A_c,\B(\A_c))$, to which the desired distributions of the $L_i$'s are absolutely continuous. 

In the last scenario, where $\A=\A_d\times\A_c$, we simply let the reference measure be given by $\nu_{\A}(\cdot)=[\nu_{\A_d}\otimes\nu_{\A_c}](\cdot)$, the product measure of the two measures defined on the two spaces $\A_d$ and $\A_c$. 
Here each auxiliary mark has the form $L_i=(L_{i1},L_{i2})\in\A_d\times\A_c$.  
The discrete random variable $L_{i1}$ indicates which type $1,\ldots,k_{\A}$ the $i$th point belongs to,  whereas $L_{i2}$ has the purpose of, say, controlling the functional mark(s). 
Hereby, given that $(L_{i1},L_{i2}) = (l_{i1},l_{i2})$, $i=1,\ldots,n$, the functional conditional densities (\ref{FunMarkDensities}) take the form
\bea
\label{MultivariateAuxiliaryDensity}
f_{(g_1,l_{12}),\ldots,(g_n,l_{n2})}^{\F}(\cdot; l_{11},\ldots,l_{n1}),
\quad
g_1,\ldots,g_n\in\G.
\eea
Hence, given the spatial(-temporal) locations $g_1,\ldots,g_n\in\G$ and $l_{12},\ldots,l_{n2}\in\A_c$, the functional mark distributions may still vary, depending on which type each point $i=1,\ldots,n$ is assigned. 
Note that the $L_{i2}$'s may be treated as random parameter vectors which control e.g.\ the supports of the marks. 

On the other hand, it may also be the case that $\Psi_G=(\Psi_G^1,\ldots,\Psi_G^{k_{\A}})$ is a multivariate spatial point process on e.g.\ $\G=\X$, for instance a multivariate Cox process. 
As such, it may be treated as the marked point process $\Psi_G=\{(X_i,L_{i1})\}_{i=1}^{N}$, where $L_{i1}$ indicates which $\Psi_G^1,\ldots,\Psi_G^{k_{\A}}$ a point belongs to.
By additionally assigning functional marks and auxiliary marks $\{L_{i2}\}_{i=1}^{N}$ to $\Psi_G$, with the latter possibly controlling the former through (\ref{MultivariateAuxiliaryDensity}), we obtain a CFMPP $\Psi=\{(X_i,L_{i1},L_{i2},M_i)\}_{i=1}^{N}$ where the $L_{i1}$'s govern both the ground process and the functional marks. 

\begin{rem}
Since each $L_{i2}$ is a random variable, which works as a parameter in a stochastic process, the current setup of (ST)CFMPPs connects naturally to a Bayesian stochastic process framework.
\end{rem}

\subsection{Intensity-dependent marks}
\label{SectionIntensityDependentMarks}
A step forward in the marking of stationary unmarked point processes is to allow the distributions of
the marks to be dependent on the local intensity, as suggested by \Citep{HoStoyan} and \Citep{MyllymakiPenttinen} in the context of stationary log Gaussian Cox processes \Citep{MollerSyversveen,Moller}. This intensity-dependent marking assumes conditional independence to hold for the marks, given the random intensity. 
Heuristically, these models allow the marks to be large (small) in areas of low point intensity and small (large) in areas of high intensity. 
For log Gaussian Cox process, intensity-dependent marking leads to a correlation of the marks which is affected by the second-order property of the unmarked Cox process.
The setup in \Citep{MyllymakiPenttinen} developed new marking models of such a generality that not only the mean of the mark distribution but also its variance is affected by the local intensity, and these models have been employed for the marking of log Gaussian Cox processes. 
In this context it is interesting to test for mark independence and for dependence between
marks and locations \Citep{Grabarnik2011,Schlather2004}.

In the current STCFMPP context we may extend these ideas further. 
\begin{definition}
A STCFMPP $\Psi$ with ground intensity $\lambda_G(x,t)$ is said to have \emph{intensity-dependent marks} if, conditionally on $\Psi_G$ and the auxiliary marks, the functional marks are given by $M_i(t)=\lambda_G(X_i,t)$, $t\in\TT$, $i=1,\ldots,N$.
\end{definition} 

Note that this falls in the category of deterministic marks (see Section \ref{SectionDeterministicMarks}) and the corresponding point masses on $(\F^n,\B(\F^n))$, $n\geq1$, may or may not depend on the auxiliary marks (recall the discussion in Section \ref{SectionAuxiliaryMarkMeasure}).
Moreover, as in the above mentioned references, larger marks indicate where and when there is high intensity.

\section{Specific classes of (ST)CFMPPs}\label{SectionClassesSTCFMPP}

Having defined a general structure for (ST)CFMPPs, we next turn to considering different specific constructions. 
Since by the notion of a spatio-temporal point process is often meant a temporally grounded point process, we look closer at temporally grounded (ST)CFMPPs and as a result obtain a definition of functional marked conditional intensities \Citep{CoxIsham1980,DVJ1,VereJones}. 
Furthermore, we look closer at finite (ST)CFMPPs, and then in particular Markov (ST)CFMPPs \Citep{DVJ1, VanLieshout}.
However, we start by defining (spatio-temporal) c\`adl\`ag functional marked Poisson and Cox processes \Citep{DVJ1, VanLieshout, Moller, SKM}.

\subsection{Poisson processes}
Poisson processes, the most well known point process models, are the benchmark/reference models for representing lack of spatial interaction.

Given a locally finite measure $\mu$ on $\B(\Y)$, 
we let a \emph{(spatio-temporal) c\`adl\`ag functional marked ((ST)CFM) Poisson process} $\Psi$, with intensity measure $\mu$, be defined as a Poisson process on $\Y$. In other words, for any disjoint $A_1,\ldots,A_n\in\B(\Y)$, $n\geq1$, the random variables $\Psi(A_1),\ldots,\Psi(A_n)$ are independent and Poisson distributed with means $\mu(A_i)$, $i=1,\ldots,n$, provided $A_i$ is bounded. 
When $\Psi$ has an intensity functional $\lambda(\cdot)$, i.e.\ when the intensity measure in (\ref{IntensityMeasure}) satisfies $\mu(A)=\int_{A}\lambda(g,l,f)\nu(d(g,l,f))$, through Proposition \ref{PropositionProdDens} it follows that 
\[
\rho^{(n)}((g_1,l_1,f_1),\ldots,(g_n,l_n,f_n))
= \prod_{i=1}^{n} \lambda(g_i,l_i,f_i)
= \prod_{i=1}^{n} 
f_{(g_i,l_i)}^{\F}(f_i)
f_{g_i}^{\A}(l_i)
\lambda_{G}(g_i),
\]
whereby the pair correlation functional satisfies $g_{\Psi}((g_1,l_1,f_1),(g_2,l_2,f_2)) = 1$. 
We note also that through (\ref{RelationPalmPapangelou}), since for all $(g,l,f)\in\Y$ the Palm measures satisfy $P^{!(g,l,f)}(\cdot)=P(\cdot)$, its Papangelou conditional intensity satisfies $\lambda(g,l,f;\Psi)=\lambda(g,l,f)$. 
When $\Psi$ is stationary, due to Corollary \ref{CorMarkDistrStationarity}, $\Psi$ becomes randomly labelled and 
$
\rho^{(n)}((g_1,l_1,f_1),\ldots,(g_n,l_n,f_n)) = \lambda_G^n>0.
$

\subsubsection{Ground Poisson processes}
We next relax the Poisson process assumption slightly to only concern the ground process. 
More specifically, we say that a (ST)CFMPP $\Psi$ is a \emph{(ST)CFM ground Poisson process} if 
its ground process $\Psi_G$ constitutes a simple Poisson process on $\G$. 
Note that in light of Proposition \ref{PropositionProdDens}, 
$$
\rho^{(n)}((g_1,l_1,f_1),\ldots,(g_n,l_n,f_n))
=
f_{(g_1,l_1),\ldots,(g_n,l_n)}^{\F}(f_1,\ldots,f_n)
f_{g_1,\ldots,g_n}^{\A}(l_1,\ldots,l_n)
\prod_{i=1}^{n}\lambda_{G}(g_i),
$$
where $\lambda_{G}(\cdot)$ is the ground intensity function. 
Note that here $g_G(g_1,g_2)\equiv1$, whereby the pair correlation functional satisfies 
$g_{\Psi}((g_1,l_1,f_1),(g_2,l_2,f_2)) 
= f_{g_1,g_2}^{\A}(l_1,l_2)/(f_{g_1}^{\A}(l_1)f_{g_2}^{\A}(l_2))$ if $\Psi$ has independent functional marks  
and 
$g_{\Psi}((g_1,l_1,f_1),(g_2,l_2,f_2)) = 1$ if $\Psi$ is independently marked.

\subsection{Cox processes}
We here consider Cox processes (see e.g.\ \Citep[p.\ 154]{SKM}) in the current context of c\`adl\`ag functional marking. 
These are common and interesting models for spatial clustering. 
Recall from (\ref{GroundIntensityMeasure}) that $\mu_G$ is the ground intensity measure of a (ST)CFMPP. 

\begin{definition}
Given a locally finite random measure $\Lambda_G$ on $\G$, 
a (ST)CFMPP $\Psi$ is called a \emph{(spatio-temporal) c\`adl\`ag functional marked ((ST)CFM) Cox process} (directed by $\Lambda_G$) if the ground process $\Psi_G$ constitutes a $\Lambda_G$-directed Cox process on $\G$. 
In other words, conditionally on $\Lambda_G$, $\Psi_G$ is a Poisson process with $\mu_G=\Lambda_G$. 
\end{definition} 
We note that, conditionally on $\Lambda_G$, $\Psi$ becomes a (ST)CFMPP ground Poisson process. Hence, a more suitable name for $\Psi$ would possibly be \emph{(ST)CFM ground Cox process}. 
Assume next that the locally finite random measure $\Lambda_G(G)=\int_{G}\Lambda(g)dg$ is generated by an a.s.\ non-negative random field $\Lambda=\{\Lambda(g)\}_{g\in\G}$, which consequently must be a.s.\ locally integrable. 
When $\G=\X$ (CFM Cox process), we may write $\Lambda=\{\Lambda(x)\}_{x\in\X}$ and when $\G=\X\times\T$ (STCFM Cox process) we may write $\Lambda=\{\Lambda(x,t)\}_{(x,t)\in\X\times\TT}$. 
For a (ST)CFM Cox process, in light of Proposition \ref{PropositionProdDens}, the $n$th product density is given by \Citep[Chapter 6.2.]{DVJ1}
$$
\rho^{(n)}((g_1,l_1,f_1),\ldots,(g_n,l_n,f_n))
=
f_{(g_1,l_1),\ldots,(g_n,l_n)}^{\F}(f_1,\ldots,f_n)
f_{g_1,\ldots,g_n}^{\A}(l_1,\ldots,l_n)
\prod_{i=1}^{n}\E[\Lambda_{G}(g_i)],
$$
whereby $g_G(\cdot)\equiv1$ and its pair correlation functional becomes
\begin{align*}
g_{\Psi}((g_1,l_1,f_1),(g_2,l_2,f_2)) 
= \frac{f_{(g_1,l_1),(g_2,l_2)}^{\F}(f_1,f_2)f_{g_1,g_2}^{\A}(l_1,l_2)}
{f_{(g_1,l_1)}^{\F}(f_1) f_{g_1}^{\A}(l_1)f_{(g_2,l_2)}^{\F}(f_2) f_{g_2}^{\A}(l_2)}
.
\end{align*}

When $\Psi$ is a (ST)CFM Cox process with spatio-temporal geostatistical marking (recall Definition \ref{DefGeostatMarking}), i.e.\ $M_i(t)=Z_{X_i}(t)$ for some spatio-temporal random field $Z=\{Z_{x}(t)\}_{(x,t)\in\X\times\TT}$, we may connect random fields and point processes simultaneously in two different ways; the driving random field $\Lambda$ 'from underneath' 
and a random field $Z$ 'from above'. 
This structure is simplified when we consider intensity dependent marks (Section \ref{SectionIntensityDependentMarks}). In the current context this translates into the following definition.
\begin{definition}
A STCFM Cox process $\Psi$ with random intensity field $\Lambda=\{\Lambda(x,t)\}_{(x,t)\in\X\times\TT}$ is said to have \emph{intensity-dependent marks} if, conditionally on $\Psi_G$ and the random field $\Lambda$, the functional marks are given by $M_i(t)=\Lambda(X_i,t)$, $t\in\TT$, $i=1,\ldots,N$.
\end{definition}

\subsection{Temporally grounded STCFMPPs and conditional intensities}\label{SectionTemporallyGrounded} 
The product densities are extremely useful tools and e.g.\ they allow for the development of an array of different tools/statistics, useful for performing statistical inference of different kinds. 
There is, however, one case which allows one to take a step further in this development and that is when a STCFMPP $\Psi$ is temporally grounded. 
More specifically, recall from Definition \ref{DefinitionGrounding} that when a STCFMPP $\Psi$ is temporally grounded we may treat it as a temporal point process $\Psi_{\T}=\{T_i\}_{i=1}^{N}$ on $\T$ with marks $\{(X_i,L_i,M_i)\}_{i=1}^{N}$. 
This allows one to exploit the natural ordering of $\T$ and thus exploit the more general theory of temporal stochastic processes, in particular that of cumulative processes (see e.g.\ \Citep{DVJ1,DVJ2}).

It should be stressed that when $\Psi$ is a CFMPP and $\X\subseteq\R$, by construction, the ground process $\Psi_G=\{X_i\}_{i=1}^{N}\subseteq\R$ may also be treated as a temporal point process with marks $\{(L_i,M_i)\}_{i=1}^{N}$. Naturally the ideas presented below still hold under this setup.

\subsubsection{The temporal ground product densities}

We first look closer at the behaviour of $\rho_{G}^{(n)}$ when $\Psi$ is temporally grounded and for this purpose we consider $G_1,\ldots,G_n = (G_1^S\times G_1^T),\ldots,(G_n^S\times G_n^T)\in\B(\G)=\B(\X)\otimes\B(\T)$.

Since $\Psi$ is temporally grounded, $\Psi_G$ may be treated as the temporal point process $\Psi_{\T}=\{T_i\}_{i=1}^{N}$ on $\T$, with associated $\X$-valued marks $\{X_i\}_{i=1}^{N}$. 
Then, under the assumption of local finiteness, we may extend 
\beann
\alpha_{G,T}^{(n)}(G_1^T\times\cdots\times G_n^T) 
= \alpha_{G}^{(n)}((\X\times G_1^T)\times\cdots\times(\X\times G_n^T))
\eeann
to become the $n$th \emph{temporal ground factorial moment measure} of $\Psi_{\T}$. 
Consequently, when we additionally assume that $\alpha_{G,T}^{(n)} \ll \ell_{1}^n$, 
where the Radon-Nikodym derivative $\rho_{G,T}^{(n)}$ is referred to as the $n$th \emph{temporal ground product density}, we may disintegrate $\alpha_{G}^{(n)}$ with respect to 
some family 
$
\{P_{t_1,\ldots,t_n}^{G,S}(\cdot):t_1,\ldots,t_n\in\T\}
$ 
of regular conditional probability distributions on $\B(\X^n)$. 
When we further assume that $P_{t_1,\ldots,t_n}^{G,S} \ll \ell_{d}^n$, with density $f_{t_1,\ldots,t_n}^{G,S}(x_1,\ldots,x_n)$, we find that
\begin{align*}
\alpha_{G}^{(n)}(G_1\times\cdots\times G_n)
&= \int_{G_1\times\cdots\times G_n}
f_{t_1,\ldots,t_n}^{G,S}(x_1,\ldots,x_n) \rho_{G,T}^{(n)}(t_1,\ldots,t_n) \prod_{i=1}^{n}dx_i dt_i
\end{align*}
and we write $\lambda_{G,T}(t) = \rho_{G,T}^{(1)}(t)$ for the corresponding intensity.
Note that when $\rho_{G}^{(n)}$ exists, $\rho_{G}^{(n)}((x_1,t_1),\ldots,(x_n,t_n)) = f_{t_1,\ldots,t_n}^{G,S}(x_1,\ldots,x_n) \rho_{G,T}^{(n)}(t_1,\ldots,t_n)$.

\begin{rem}
When on the other hand $\Psi$ is spatially grounded, $\Psi_{G}$ may be described as a spatial point process $\Psi_{\X}=\{X_i\}_{i=1}^{N}$ with $\T$-valued temporal marks $\{T_i\}_{i=1}^{N}$. Then, under analogous assumptions and in an identical fashion, we may derive the \emph{spatial ground product density} $\rho_{G,S}^{(n)}(x_1,\ldots,x_n)$ and the corresponding family $f_{x_1,\ldots,x_n}^{G,T}(t_1,\ldots,t_n)$ of densities for which the product constitutes $\rho_{G}^{(n)}((x_1,t_1),\ldots,(x_n,t_n))$, provided it exists. However, at least when $d\geq2$ (which is a natural assumption), there is no natural ordering to be gained, whereby the temporal component is absorbed into the auxiliary mark and we end up considering the usual CFMPP case.

\end{rem}

It should be pointed out at this stage that not all STCFMPPs are temporally grounded. E.g.\ when $\G=\X\times\T=\R^2\times[0,\infty)$, in the case of a Poisson process, we must require that $\alpha_{G,T}^{(1)}(C)=\int_{C}\lambda_{G,T}(t)dt=\mu_G(\R^2\times C)<\infty$ for bounded $C\in\B(\T)$ in order for $\Psi_{\T}$ to be a well-defined point process on $\T$. When $\Psi$ is a finite STCFMPP this follows automatically.

\subsubsection{Cumulative STCFMPPs and conditional intensities}
Assume now that $\T\subseteq[0,\infty)$ and consider some suitable filtered probability space $(\Omega,\mathcal{F},\mathcal{F}_{\T}=\{\mathcal{F}_{t}\}_{t\in\T},\P)$. 
Note that in this section we essentially follow the structure and notation of \Citep[Chapter 14]{DVJ2}. 

Define  
$(\M_{\T}, \B(\M_{\T}), \nu_{\M_{\T}}) = (\X\times\A\times\F, \B(\X)\otimes\B(\A)\otimes\B(\F), \ell_{d}\otimes\nu_{\A}\otimes\nu_{\F})$ so that $(X_i,L_i,M_i)$ takes values in $(\M_{\T}, \B(\M_{\T}), \nu_{\M_{\T}})$, 
and define 
\beann
\Psi_{\T}(C,K)=\Psi(B\times C\times D\times E)
\eeann
for 
$C\in\B(\T)$ and 
$K=B\times D\times E\in\B(\M_{\T})$
which gives us the \emph{cumulative process} $\Psi_{\T}:\T\times\B(\M_{\T})\times\Omega\rightarrow[0,\infty]$, 
\bea
\label{CumulativeProcess}
\Psi_{\T}(t,K)=\Psi_{\T}([0,t],K)= \sum_{i=1}^{N}\1_{K}(X_i, L_i, M_i) \1_{(-\infty,t]}(T_i),
\quad t\in\T.
\eea
We note that this, in fact, is a cumulative process in the sense of \Citep[p.\ 378]{DVJ2} since $\Psi_{\T}(t,K)$ is monotonically increasing in $t$, for fixed $K$, and a locally finite measure on $\B(\M_{\T})$ for fixed $t$. 
Consider further the history/filtration 
\bea
\label{NaturalFiltration}
\mathcal{H}_{t} = 
\sigma(
\{\Psi_{\T}((s,t],K) : 
0<s\leq t, 
K\in\B(\M_{\T})
\}
)
\subseteq\mathcal{F}_{t}
\eea 
and note that $\Psi_{\T}(t,K)$ becomes progressively measurable with respect to $\mathcal{F}_{\T}$ \Citep[(A3.3.2)]{DVJ1} and the so-called mark-predictable $\sigma$-algebra \Citep[p.\ 379]{DVJ1} is coarser than $\mathcal{F}_{\T}$. 
Hence, since $\Psi_{\T}(t,K)$ is adapted to $\mathcal{F}_{\T}$, we have a setup well enough specified to accommodate  all purposes/constructions considered below.

Consider next the \emph{compensator} $A(t,K)$ of $\Psi_{\T}$, which is the unique (mark-)predictable cumulative process $A(t,K)$ such that, for each $K\in\B(\M_{\T})$, the process $\Psi_{\T}(t,K)-A(t,K)$ is a martingale with respect to $\mathcal{F}_{\T}$ \Citep[Definition 14.2.III.]{DVJ2}. 
Under the current setup, such an $A(t,K)$ exists uniquely and through it we may define a \emph{conditional intensity} \Citep[Definition 14.3.I.]{DVJ2}. The conditional intensity is an $\mathcal{F}_{\T}$-adapted process $\lambda^*(t,(x,l,f);\omega)$ (we emphasise this by writing out the $\omega$) which is measurable with respect to $\B(\T)\otimes\B(\M_{\T})\otimes\mathcal{F}$ and satisfying
\beann
A(t,K;\omega) = \int_{(0,t]\times K} \lambda^*(u,(x,l,f);\omega) du \nu_{\M_{\T}}(d(x,l,f))
\eeann
a.s.\ for all $t\in\T\setminus\{0\}$ and $K\in\B(\M_{\T})$. 
However, in order to explicitly define $\lambda^*$ we have to consider the Campbell measure $\mathcal{C}_{\Psi_{\T}}$ on $\T\times\M_{\T}\times\Omega$, which is defined through the relations 
\beann
\mathcal{C}_{\Psi_{\T}}(C\times K\times F) 
= \int_{F} 
\Psi_{\T}(C,K;\omega)
\P(d\omega),
\quad F\in\mathcal{F},
\eeann
for bounded $C\in\B(\T)$ and $K\in\B(\M_{\T})$. 
By assuming that $\mathcal{C}_{\Psi_{\T}}(\cdot)\ll\ell_1\otimes\nu_{\M_{\T}}\otimes\P$, we find that a predictable version of $\lambda^*$ is given by the corresponding Radon-Nikodym derivative  \Citep[Proposition 14.3.II.]{DVJ2}
\beann
\lambda^*(u,(x,l,f);\omega) = 
\frac{d\mathcal{C}_{\Psi_{\T}}(t, K; \omega)}
{
d(
\ell_1\otimes\nu_{\M_{\T}}\otimes\P
)
}
\eeann
and this version coincides (except possibly on a $[\ell_1\otimes\nu_{\M_{\T}}\otimes\P]$-null set) with any other conditional intensity defined through the integral equation above. 
Hereby the conditional intensity of the ground process $\Psi_{\T}^{G}$ is given by
\beann
\lambda_G^*(t;\omega) = \int_{\M_{\T}} \lambda^*(t,(x,l,f);\omega) \nu_{\M_{\T}}(d(x,l,f)).
\eeann
\begin{lemma}
\label{LemmaConditionalIntensity}
The conditional intensity is given by
$$
\lambda^*(t,(x,l,f);\omega) 
= f_{(x,t,l)}^{\F}(f;\omega) f_{(x,t)}^{\A}(l;\omega) f_{t}^{G,S}(x;\omega) \lambda_G^*(t;\omega),
$$
with $\E[\lambda^*(t,(x,l,f))] = \lambda(t,x,l,f) = f_{(x,t,l)}^{\F}(f) f_{(x,t)}^{\A}(l) f_{t}^{G,S}(x)\lambda_G^*(t)$, and 
\beann
\lambda_K^*(t;\omega) = \int_{K} \lambda^*(t,(x,l,f);\omega)[\ell\otimes\nu_{\A}\otimes\nu_{\F}](d(x,l,f))
\eeann
is the conditional intensity of $\Psi_{\T}(t,K)$. 
\end{lemma}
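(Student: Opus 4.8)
The plan is to mirror the disintegration argument of Proposition~\ref{PropositionProdDens}, but carried out conditionally on the internal history $\mathcal{H}_t$ of (\ref{NaturalFiltration}) rather than unconditionally. Starting from the defining Radon--Nikodym identity
$$
\lambda^*(t,(x,l,f);\omega) = \frac{d\mathcal{C}_{\Psi_{\T}}(t,K;\omega)}{d(\ell_1\otimes\nu_{\M_{\T}}\otimes\P)},
$$
I would exploit the fact that the dominating mark measure factorises as $\nu_{\M_{\T}} = \ell_d\otimes\nu_{\A}\otimes\nu_{\F}$, so that the full dominating measure is $\ell_1\otimes\ell_d\otimes\nu_{\A}\otimes\nu_{\F}\otimes\P$. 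The strategy is then to disintegrate $\mathcal{C}_{\Psi_{\T}}$ in four successive stages over the component spaces $\T$, $\X$, $\A$ and $\F$, and to identify the resulting factors with the four quantities on the right-hand side via the chain rule for Radon--Nikodym derivatives.

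First I would integrate out the marks entirely: projected onto $\T$, the compensator characterisation \Citep[Definition 14.2.III]{DVJ2} shows that the corresponding section is absolutely continuous with respect to $\ell_1\otimes\P$ with derivative $\lambda_G^*(t;\omega)$, the ground conditional intensity already introduced. Conditioning on the event time $t$ and the history, the csm (Polish) property of $\X$, $\A$ and $\F$ guarantees, exactly as in the proof of Proposition~\ref{PropositionProdDens} via \Citep[Proposition A1.5.III]{DVJ1}, the existence of regular conditional probabilities on $\B(\X)$, $\B(\A)$ and $\B(\F)$. Their assumed absolute continuity with respect to $\ell_d$, $\nu_{\A}$ and $\nu_{\F}$ respectively yields densities $f_t^{G,S}(x;\omega)$, $f_{(x,t)}^{\A}(l;\omega)$ and $f_{(x,t,l)}^{\F}(f;\omega)$, and applying Lemma~\ref{RemarkDisintegration} in its conditional form (with $n=1$ and the regular probabilities now indexed additionally by $\omega$) splits the joint mark density into the product $f_{(x,t,l)}^{\F}(f;\omega)\, f_{(x,t)}^{\A}(l;\omega)$. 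Multiplying the four stages together gives the claimed factorisation of $\lambda^*$.

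For the expectation identity I would take $\E[\cdot]$ on both sides and invoke $\E[\lambda^*(t,(x,l,f);\omega)] = \lambda(t,x,l,f)$, the conditional-intensity analogue of the relation $\E[\lambda(g,l,f;\Psi)]=\lambda(g,l,f)$ recorded after (\ref{RelationPalmPapangelou}). Combining the STCFMPP factorisation of $\lambda$ from Proposition~\ref{PropositionProdDens} with the temporal decomposition $\rho_G^{(1)}((x,t)) = f_t^{G,S}(x)\,\lambda_{G,T}(t)$ of the ground intensity, established in the temporal ground product density subsection, then matches the factors term by term and identifies $\lambda_{G,T}(t)=\E[\lambda_G^*(t;\omega)]$. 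Finally, that $\lambda_K^*(t;\omega)=\int_{K}\lambda^*(t,(x,l,f);\omega)[\ell\otimes\nu_{\A}\otimes\nu_{\F}](d(x,l,f))$ is the conditional intensity of $\Psi_{\T}(t,K)$ follows directly by restricting the compensator identity to marks in $K$ and applying Fubini, since the compensator of $\Psi_{\T}(\cdot,K)$ is $\int_{(0,t]\times K}\lambda^*\,du\,\nu_{\M_{\T}}(d(x,l,f))$.

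The main obstacle I anticipate is not the algebraic factorisation but the measurability and predictability bookkeeping: Lemma~\ref{RemarkDisintegration} was stated for a fixed family of regular probabilities, whereas here every conditional density carries an $\omega$-dependence through $\mathcal{H}_t$, and one must ensure that the regular conditional probabilities---and hence the densities $f_t^{G,S}$, $f^{\A}_{(x,t)}$, $f^{\F}_{(x,t,l)}$---can be chosen jointly measurable in $(t,x,l,f,\omega)$ and predictable with respect to $\mathcal{F}_{\T}$, so that their product is a genuine predictable version of the conditional intensity. This is where the progressive measurability of $\Psi_{\T}(t,K)$ and the coarseness of the mark-predictable $\sigma$-algebra noted after (\ref{NaturalFiltration}) must be used, together with the uniqueness (up to a $[\ell_1\otimes\nu_{\M_{\T}}\otimes\P]$-null set) of the compensator to reconcile the staged derivative with the one defined directly through the integral equation.
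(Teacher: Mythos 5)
Your proposal is, in substance, the paper's own argument: the staged disintegration you outline (project onto $\T$ to obtain $\lambda_G^*$, then build regular conditional distributions on $\B(\X)$, $\B(\A)$, $\B(\F)$ and take densities) is exactly what the paper obtains in one step by citing \Citep[Proposition 14.3.II.]{DVJ2}, which yields $\lambda^*(t,(x,l,f);\omega)=f_{\M_{\T}}(x,l,f\,|\,t;\omega)\,\lambda_G^*(t;\omega)$; after that, both you and the paper split the conditional mark density into $f_{(x,t,l)}^{\F}\,f_{(x,t)}^{\A}\,f_{t}^{G,S}$ via Lemma \ref{RemarkDisintegration}, and your handling of the final claim (restrict the compensator to $(0,t]\times K$ and apply Fubini, so that $A(t,K)=\int_{(0,t]}\lambda_K^*(u)\,du$) is precisely the content of the paper's citation of \Citep[p.\ 392]{DVJ2}. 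Your closing concerns about predictable versions and null-set uniqueness are likewise what the paper delegates to those references.

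The one step where your argument, as written, is not a proof is the expectation identity. You ``invoke'' $\E[\lambda^*(t,(x,l,f))]=\lambda(t,x,l,f)$ as an analogue of the Papangelou relation (\ref{RelationPalmPapangelou}); but that identity is itself part of the assertion of the lemma, so appealing to it as known is circular, and the analogy is loose in any case, since the Papangelou conditional intensity conditions on the configuration outside an infinitesimal region whereas $\lambda^*$ conditions on the history $\mathcal{H}_t$. The paper instead derives the identity from the defining property of the compensator: $\Psi_{\T}(t,K)-A(t,K)$ is a mean-zero martingale, hence $\E[A(t,K)]=\E[\Psi_{\T}(t,K)]$ for every $t$ and $K\in\B(\M_{\T})$; by Fubini the left-hand side equals $\int_{(0,t]\times K}\E[\lambda^*(u,(x,l,f))]\,du\,\nu_{\M_{\T}}(d(x,l,f))$, while by (\ref{IntensityMeasure}) the right-hand side equals $\int_{(0,t]\times K}\lambda(u,x,l,f)\,du\,\nu_{\M_{\T}}(d(x,l,f))$, and letting $t$ and $K$ vary gives $\E[\lambda^*]=\lambda$ almost everywhere. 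With that one-line substitution in place of the appeal to analogy, your term-by-term matching of the factors, including the identification $\lambda_{G,T}(t)=\E[\lambda_G^*(t)]$, goes through.
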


\begin{proof}
By \Citep[Proposition 14.3.II.]{DVJ2}, there exist regular probabilities $P_t^{\M_{\T}}(\cdot)$ on $\B(\M_{\T})$ which are absolutely continuous with respect to $\nu_{\M_{\T}}$ with densities $f_{\M_{\T}}(x,l,f | t;\omega)$ such that 
$\lambda^*(t,(x,l,f)) = f_{\M_{\T}}(x,l,f|t;\omega) \lambda_G^*(t;\omega)$. 
By applying Lemma \ref{RemarkDisintegration} we obtain the first statement. 
Note further that since $\E[A(t,K)]=\E[\Psi_{\T}(t,K)]$ it follows that $\E[\lambda^*(t,(x,l,f))] = \lambda(t,x,l,f)$, the intensity functional of $\Psi$. 
The last part follows from \Citep[p.\ 392]{DVJ2}. 
\end{proof}
Note here that $\lambda^*$ depends on $\Psi$ only through the history $\mathcal{H}_t$ and not the whole realisation (emphasised by $\omega$) so we consequently may write $\lambda^*(\cdot;\mathcal{H}_t)$ or simply $\lambda^*(\cdot)$, which is more common notation in point process literature. 
Provided that the limit exists, there exists a version of $\lambda^*$ such that 
\beann
\lambda^*(t,x,l,f) 
= \lim_{\Delta\downarrow0}
\frac{\E[\Psi(B_{\X}[x,\Delta]\times [t,t+\Delta]\times B_{\A}[l,\Delta]\times B_{\F}[f,\Delta]) | \mathcal{H}_t]}
{\ell_d(B_{\X}[x,\Delta]) \ell_1([t,t+\Delta]) \nu_{\A}(B_{\A}[l,\Delta]) \nu_{\F}(B_{\F}[f,\Delta])},
\eeann
where e.g.\ $B_{\F}[f,\Delta]$ denotes the closed ball which is centred around $f\in\F$ with $d_{\F}$-radius $\Delta$. 

Note also that extensions where $\T=\R$ are possible \Citep[p.\ 394]{DVJ2}. However, since in most applications it is natural to assume that the temporal starting point is 0, we do not choose to consider such a setup.

To give an example, recall Section \ref{SectionAuxiliaryMarkMeasure} and assume that $\A=\A_c=\R$. By 
\Citep[Lemma 6.4.VI.]{DVJ1} we may construct a STCFMPP $\Psi$ where $\{(T_i,L_i)\}_{i=1}^{N}$ constitutes a Compound Poisson process. This is done by letting $\Psi_{\T}$ be a Poisson process with intensity $\lambda_{G,T}(t)$ and $f_{(x,t)}^{\A}(l)$ be the density of the underlying jump size kernel $F(\cdot|\cdot)$. 
Note that in this construction we allow for the jump sizes to be location dependent.

\subsubsection{Total temporal evolution and Markovian functional marks}

Here we may extend the concept of Markovian functional marks a bit. 
Let $\T=\TT$ and let $I_t=\{i:T_i\leq t\}$, $t\in\T$, where $I_t = \bigcup_{s\leq t}I_s$. 
Letting $I=I_t$ in (\ref{ConditionalMarkProcess}), we obtain 
$
M_{I_t}(s) = (M_i(s))_{i\in I_t}, 
$ 
$s\in\T$, 
i.e.\ the multivariate conditional mark process of all point observed by time $t\in\T$. 
Note also that when we assume that $\inf\{\mathrm{supp}(M_i)\}=T_i$, then $I_t = \bigcup_{s\leq t}\{i:s\in\mathrm{supp}(M_i)\}$. 
Recall now the history $\mathcal{H}_{t}$ in (\ref{NaturalFiltration}) and define 
$\mathcal{F}_{t}^{M}=\sigma\{M_{I_t}(s)^{-1}(A)\in\mathcal{F} : s\in\T\cap[0,t], A\in\B(\R)^{|I_s|}\}$ so that, if one were to study the simultaneous temporal evolution of the points and the marks, it should hold that 
$$
\mathcal{F}_{t}^* = \mathcal{H}_{t} \bigvee \mathcal{F}_{t}^{M} \subseteq\mathcal{F}_{t},
\quad
t\in\T,
$$
for the underlying filtration $\mathcal{F}_{\T} = \{\mathcal{F}_{t}\}_{t\in\T}$.
Hereby we may study the simultaneous evolution of $\Psi_{\T}$ (or $\Psi_{\T}^G$) and $M_{I_t}$, i.e.\ the \emph{total temporal evolution}.
Note here that when
$\Psi$ has Markovian marks, it follows that 
\beann
\P\left(M_{I_t}(t)\in A|\mathcal{F}_{s}\right)
=
\P\left(M_{I_t}(t)\in A|M_{I_t}(s)\right)
\eeann 
for $s\leq t$ and $A\in\B(\R^n)$.

\subsection{Finite (ST)CFMPPs}\label{SectionFinitePPs}
Recall the definitions of finite CFMPPs and finite STCFMPPs given in Definitions \ref{DefCFMPP} and \ref{DefSTCFMPP}, respectively. 
Following \Citep[Chapter 5.3]{DVJ1}, 
we find that the distribution $P$ of a finite (ST)CFMPP $\Psi$ is completely specified by 
its  \emph{Janossy measures}, 
$
J_n(A_1\times\cdots\times A_n) 
$, 
$A_1,\ldots,A_n\in\B(\Y)$, $n\geq0$, where $\sum_{n=0}^{\infty}\frac{1}{n!}J_n(\Y^n) = 1$. We assume that these have symmetric densities with respect to the $n$-fold products $\nu^n$ of the reference measure in (\ref{ReferenceMeasure}), i.e.
\beann
J_n(A_1\times\cdots\times A_n) 
= \int_{A_1\times\cdots\times A_n} j_n((g_1,l_1,f_1),\ldots,(g_n,l_n,f_n))
\prod_{i=1}^{n}\nu(d(g_i,l_i,f_i)),
\eeann
and we refer to these densities $j_n(\cdot)$ as the \emph{Janossy densities}. 
Here $\Y^0$ denotes an ideal point such that $\Y^0\times\Y = \Y\times\Y^0=\Y$ \Citep[Proposition 5.3.II.]{DVJ1}. 
The interpretation is that 
$
J_n(d(g_1,l_1,f_1)\times\cdots\times d(g_n,l_n,f_n))
$ 
gives the probability of $\Psi$ having \emph{exactly} $n$ marked points in the infinitesimal ground-mark regions $d(g_1,l_1,f_1),\ldots,d(g_n,l_n,f_n)\subseteq\Y$ and no points anywhere else. 
We note here that by construction the ground process $\Psi_G$ is also a finite point process, on $\G$, and its Janossy densities will be denoted by 
$j_n^G$, $n\geq0$.

Recalling the interpretation of $\rho^{(n)}$, it quickly becomes clear that there is a connection between  the product densities and $j_n$. 
The fundamental difference between the two is that $j_n$ gives the infinitesimal probability of having exactly $n$ points at the specified marked locations and no points anywhere else.

\begin{lemma}\label{LemmaJanossy}
For each $n\geq1$, 
recalling the conditional densities (\ref{AuxMarkDensities}) and (\ref{FunMarkDensities}), 
the Janossy densities satisfy
\begin{align*}
&j_n((g_1,l_1,f_1),\ldots,(g_n,l_n,f_n))
=
f_{(g_1,l_1),\ldots,(g_n,l_n)}^{\F}(f_1,\ldots,f_n)
f_{g_1,\ldots,g_n}^{\A}(l_1,\ldots,l_n)
j_n^G(g_1,\ldots,g_n).
\end{align*}

\end{lemma}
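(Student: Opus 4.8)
The plan is to mirror the proof of Proposition \ref{PropositionProdDens}, replacing the factorial moment measures by the Janossy measures throughout. First I would establish the Janossy-level analogue of the relation between $\alpha^{(n)}$ and $\alpha_G^{(n)}$, namely that the ground Janossy measure is recovered from the full one by marginalising out the marks,
$$
J_n^G(G_1\times\cdots\times G_n) = J_n((G_1\times\M)\times\cdots\times(G_n\times\M)),
\quad G_1,\ldots,G_n\in\B(\G).
$$
Since $\Psi\in\mathcal{N}_{\Y}^*$ a.s.\ forces the ground process to be simple with exactly one mark per ground point, the event ``exactly $n$ marked points, with ground coordinates in the prescribed regions and none elsewhere'' coincides with the corresponding ground event, so this identity follows from the defining probabilistic interpretation of the Janossy measures together with $\Psi_G(G)=\Psi(G\times\M)$. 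Passing to densities with respect to $\ell^n$ exhibits $j_n^G$ as the $\M^n$-marginal of $j_n$.

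Next, for fixed mark sets $H_1,\ldots,H_n\in\B(\M)$ I would observe that $J_n((\cdot\times H_1)\times\cdots\times(\cdot\times H_n))\ll J_n^G$, and, since $\G^n$ and $\M^n$ are csm spaces, disintegrate $J_n$ with respect to $J_n^G$ to obtain a regular disintegrating family of $n$-point mark distributions on $\B(\M^n)$. The decisive point is that this family coincides with the family $P^{\M,n}$ of Proposition \ref{PropositionProdDens}: both realise the \emph{intrinsic} conditional law of the marks attached to ground points located at $g_1,\ldots,g_n$, which is a property of the marking mechanism and is insensitive to whether one conditions inside the factorial moment measure or inside the Janossy measure. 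Having made this identification, an application of Lemma \ref{RemarkDisintegration} splits each $P_{g_1,\ldots,g_n}^{\M}$ into its auxiliary and functional components, and the assumed absolute continuity of $J_n$ with respect to $\nu^n=(\ell\otimes\nu_{\A}\otimes\nu_{\F})^n$ supplies precisely the densities (\ref{AuxMarkDensities}) and (\ref{FunMarkDensities}).

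Finally, combining the disintegration $J_n(d(g,m))=P_{g_1,\ldots,g_n}^{\M}(dm)\,J_n^G(dg)$ with the factorisation furnished by Lemma \ref{RemarkDisintegration} and the density $j_n^G$ of $J_n^G$ with respect to $\ell^n$ yields two expressions for the $\nu^n$-density of $J_n$; equating them and invoking a.e.\ agreement of integrands (exactly as in the closing line of Lemma \ref{RemarkDisintegration}) gives the claimed product form of $j_n$. The step I expect to be the main obstacle is the identification of the Janossy-disintegrated mark kernel with $P^{\M,n}$: one must argue that the conditional mark law given \emph{exactly} $n$ ground points agrees with the conditional mark law extracted from the factorial moment measure, rather than merely producing some abstract disintegrating kernel. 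This is where the structural assumption that the marks at $g_1,\ldots,g_n$ are governed by a single consistent family $P^{\M,n}$ depending only on those locations is essential, and it is what licenses re-use of the densities (\ref{AuxMarkDensities}) and (\ref{FunMarkDensities}).
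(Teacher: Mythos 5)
Your outer skeleton --- marginalise $J_n$ to obtain $J_n^G$, disintegrate $J_n$ with respect to $J_n^G$, and split the resulting mark kernel via Lemma \ref{RemarkDisintegration} --- is essentially the paper's proof: there, the existence of the $j_n$ makes $\Psi$ regular in the sense of \Citep[p.\ 247]{DVJ1}, so that \Citep[Proposition 7.3.I]{DVJ1} directly yields $j_n = f^{\M}_{g_1,\ldots,g_n}\cdot j_n^G$, with $f^{\M}_{g_1,\ldots,g_n}$ the conditional mark density given $\Psi_G=\{g_1,\ldots,g_n\}$, and Lemma \ref{RemarkDisintegration} then factorises $f^{\M}$. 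The step you yourself single out as the main obstacle, however, is a genuine gap: the identification of the Janossy-disintegrated mark kernel with the family $P^{\M,n}$ of Proposition \ref{PropositionProdDens} is \emph{not} ``insensitive to whether one conditions inside the factorial moment measure or inside the Janossy measure'', because these are different conditionings. The Janossy kernel is the mark law given \emph{exactly} $n$ ground points at $g_1,\ldots,g_n$; the factorial-moment kernel is the mark law given that $g_1,\ldots,g_n$ are points of $\Psi_G$ \emph{among possibly others}, i.e.\ it averages over all configurations containing them. Concretely, take $\ell(\G)=1$ and let $\Psi$ have, with probability $1/2$, one uniformly placed point with mark law $Q_1$ and, with probability $1/2$, two independent uniformly placed points with i.i.d.\ marks of law $Q_2\neq Q_1$ (both laws absolutely continuous with respect to $\nu_{\A}\otimes\nu_{\F}$). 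The order-one Janossy disintegration then gives the kernel $Q_1$, while disintegrating $\alpha^{(1)}$ by $\alpha_G^{(1)}$ gives $\tfrac{1}{3}Q_1+\tfrac{2}{3}Q_2$. So no single ``intrinsic'' family depending only on the locations governs both, and the paper states no structural assumption on the marking mechanism that would rule such examples out; flagging the obstacle does not remove it.

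The repair is to delete the identification rather than to justify it. Since $J_n\ll\nu^n$ by the standing assumption of Section \ref{SectionFinitePPs}, the kernel obtained by disintegrating $J_n$ with respect to $J_n^G$ is automatically absolutely continuous with respect to $(\nu_{\A}\otimes\nu_{\F})^n$; feeding \emph{this} family into Lemma \ref{RemarkDisintegration} produces densities of the form (\ref{AuxMarkDensities}) and (\ref{FunMarkDensities}) and hence the claimed factorisation, with those densities now interpreted as the Janossy-conditional ones. This is exactly what the paper does, which is why its proof never appeals to Proposition \ref{PropositionProdDens}. The price --- which your attempt usefully makes visible --- is that the lemma, so proved, does not assert that these densities coincide with the functions appearing in the product-density factorisation of Proposition \ref{PropositionProdDens}; that stronger claim is what you tried to prove, and it would require an additional consistency assumption on the marking mechanism that neither you nor the paper supplies.
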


\begin{proof}
We here see that the existence of all $j_n$, $n\geq1$, implies that $\Psi$ is \emph{regular} in the sense of \Citep[p.\ 247]{DVJ1}. Hence, we find that $\Psi_G$ is regular in its own right and for any $n\geq1$, given $\Psi_G = \{g_1,\ldots,g_n\}$, the conditional distribution of the marks has density $f_{g_1,\ldots,g_n}^{\M}(m_1,\ldots,m_n)$ on $\M^n = (\A\times\F)^n$, with respect to $(\nu_{\A}\otimes\nu_{F})^n$ \Citep[Proposition 7.3.I.]{DVJ1}. 
Hereby an application of Lemma \ref{RemarkDisintegration} gives the desired result.

\end{proof}

In the current setup, for any bounded $G\in\B(\G)$ and any $n\geq1$, the \emph{local Janossy measure} $J_n(\cdot|G\times\M)$, i.e.\ the Janossy measure of $\Psi\cap(G\times\M)$, has a well-defined probability density 
$
j_n((g_1,l_1,f_1),\ldots,(g_n,l_n,f_n)|G\times\M)
$ 
with respect to $\nu^n$, the \emph{local Janossy density} \Citep[p.\ 247]{DVJ1}.
We further call $\Psi$ \emph{totally finite} if $\G$ is bounded and we see that when $\Psi$ is totally finite,  $j_n^G(g_1,\ldots,g_n) = j_n^G(g_1,\ldots,g_n|\G\times\M)$ may be treated as a local Janossy density.

\begin{rem}
This construction is equivalent to specifying $\Psi$ through 
a) a family of discrete probability distributions $p_n = \P(\Psi(\Y)=n) = \P(N=n)$, $n\in\N$, and 
b) a family of symmetric probability measures 
$
\Pi_n(A_1\times\cdots\times A_n) 
= \frac{J_n(A_1\times\cdots\times A_n)}{p_n n!},
$ 
$n\geq1$ (so that $J_0(\Y^0) = p_0$) \Citep[Proposition 5.3.II.]{DVJ1}. 
Hereby the density of $\Pi_n$ with respect to $\nu^n$ is $\pi_n(\cdot)=\frac{j_n(\cdot)}{p_n n!}$ so that, conditionally on $N=n$, $\pi_n(\cdot)$ determines the multivariate distribution of the points of $\Psi$ in $\Y$. 
\end{rem}

\subsubsection{Densities with respect to Poisson processes}

Recalling Section \ref{SectionAuxiliaryMarkMeasure}, assume now that the auxiliary reference measure $\nu_{\A}$ is finite on $\B(A)$. This implies that $\nu_{\A}\otimes\nu_{\F}$ is a finite measure on $\B(\M)=\B(\A)\otimes\B(\F)$. Note that $[\nu_{\A}\otimes\nu_{\F}](\A\times\F) = \nu_{\A}(\A)\nu_{\F}(\F) = \nu_{\A}(\A)$ and if $\nu_{\A}$ is given by a probability measure, $[\nu_{\A}\otimes\nu_{\F}](\A\times\F)=1$.

Let further $\Psi^*$ be a finite (ST)CFM Poisson process with ground intensity function $\lambda_*$, finite ground intensity measure $\mu_{G}^*$, i.e.\  
$\mu_{G}^*(\G)=\int_{\G}\lambda_*(g)dg<\infty$, and intensity functional $\lambda_*(g,l,f)=f_{(g,l)}^{\F}(f)f_{g}^{\A}(l)\lambda_*(g)$. 
Denote the distribution of $\Psi^*$ on $\Sigma_{\mathcal{N}_{\Y}}$ by $P_{*}$. 

Assume next that $\Psi$ is a finite (ST)CFMPP with distribution $P$ on $\Sigma_{\mathcal{N}_{\Y}}$ and assume that $P\ll P_{*}$, where the corresponding density will be denoted by $p_{\Psi}(\varphi)$, $\varphi\in\mathcal{N}_{\Y}$. 
We may equivalently and conveniently consider the supports of $P$ and $P_{*}$, 
whereby 
we may define the density as $p_{\Psi}:\mathcal{N}^{f}\rightarrow[0,\infty)$, where $\mathcal{N}^{f}=\{\{(g_1,l_1,f_1),\ldots,(g_n,l_n,f_n)\}\subseteq\Y:n<\infty\}$ is the collection of all finite subsets of $\Y$ (see e.g.\ \Citep{DVJ2,VanLieshout, Moller}). 
Hereby (see e.g.\ \Citep[Chapter 2.3]{VanLieshout} and \Citep[Chapter 6.6.1]{Moller}) 
the density $p_{\Psi}(\cdot)$ satisfies 
$\int_{\mathcal{N}^{f}}p_{\Psi}(\y)P_*(d\y)=1$ and 
\beann
\P(\Psi\in R)
&=& \sum_{n=0}^{\infty}
\frac{\e^{-\mu_{G}^*(\G)\nu_{\A}(\A)\nu_{\F}(\M)}}{n!}
\int_{\Y^n}
\1\{\{(g_1,l_1,f_1),\ldots,(g_n,l_n,f_n)\}\in R\} \times
\\
&&\times
p_{\Psi}((g_1,l_1,f_1),\ldots,(g_n,l_n,f_n)) 
\prod_{i=1}^{n} 
f_{(g_i,l_i)}^{\F}(f_i)f_{g_i}^{\A}(l_i)\lambda_*(g_i) \nu(d(g_i,l_i,f_i))
\eeann
for $R\in\Sigma_{\mathcal{N}_{\Y}}$, 
%$R\subseteq\mathcal{N}^{f}$, 
where  
\begin{align}
\label{DensityWrtPoisson}
&p_{\Psi}((g_1,l_1,f_1),\ldots,(g_n,l_n,f_n)) 
=
\e^{\mu_{G}^*(\G)\nu_{\A}(\A)\nu_{\F}(\M)} 
j_n((g_1,l_1,f_1),\ldots,(g_n,l_n,f_n))
\\
&=
\e^{\mu_{G}^*(\G)\nu_{\A}(\A)\nu_{\F}(\M)} 
f_{(g_1,l_1),\ldots,(g_n,l_n)}^{\F}(f_1,\ldots,f_n)
f_{g_1,\ldots,g_n}^{\A}(l_1,\ldots,l_n)
j_n^G(g_1,\ldots,g_n)
\nn
\end{align}
through Lemma \ref{LemmaJanossy}. 
Note also that the densities satisfy $\E[p_{\Psi}(\Psi\cup\{(g_1,l_1,f_1),\ldots,(g_n,l_n,f_n)\})]=\rho^{(n)}((g_1,l_1,u_1),\ldots,(g_n,l_n,u_n))$ \Citep[(16)]{MollerWaagepetersen}.

\subsubsection{Markov (ST)CFMPPs}
\label{SectionMarkovCFMPPs}
For (ST)CFMPPs there are many different ways in which one can describe (local) interaction structures. 
One important specific construction in the current context, when $\Psi$ is finite with density $p_{\Psi}$, is the class of \emph{Markov (ST)CFMPPs}. 

Given a symmetric and reflexive \emph{neighbour relation} $\sim$ on $\Y$, we say that points $(g_1,l_1,f_1)\in\Y$ and $(g_2,l_2,f_2)\in\Y$ are \emph{neighbours} if $(g_1,l_1,f_1)\sim(g_2,l_2,f_2)$ and we define the \emph{neighbourhood} of $(g_1,l_1,f_1)$ as $\partial(\{(g_1,l_1,f_1)\})=\{(g,l,f)\in\Y:(g_1,l_1,f_1)\sim(g,l,f)\}$. In addition, we write $\partial_{\Psi}(\{(g_1,l_1,f_1)\})=\partial(\{(g_1,l_1,f_1)\})\cap\Psi$. 

\begin{definition}
Given a symmetric and reflexive relation $\sim$ on $\Y$, we say that a (ST)CFMPP $\Psi$ is a \emph{Markov (ST)CFMPPs} if, for all $\y$ such that $p_{\Psi}(\y)>0$, 
\begin{enumerate}
\item[a)] $p_{\Psi}(\z)>0$ for all $\z\subseteq\y$,
\item[b)] for all $(g,l,f)\in\Y$, $p_{\Psi}(\y\cup\{(g,l,f)\})/p_{\Psi}(\y)$ depends only on $(g,l,f)$ and $\partial_{\Psi}(\{(g_1,l_1,f_1)\})$.
\end{enumerate}

\end{definition}

If  $(g_1,l_1,f_1)\sim(g_2,l_2,f_2)$ for all $(g_1,l_1,f_1),(g_2,l_2,f_2)\in\y$, we say that $\y$ is a \emph{clique}. 
By writing $c_{\y}^{\sim}=\{\z\subseteq\y:\z\text{ is a clique under}\sim\}$, the Hammersley-Clifford theorem \Citep[(2.7)]{VanLieshout} states that there is a measurable interaction function $\phi:\mathcal{N}^{f}\rightarrow[0,\infty)$ such that 
$
p_{\Psi}(\y) = \prod_{\z\in c_{\y}^{\sim}} \phi(\z)
$
if and only if $\Psi$ is a Markov (ST)CFMPP. 
This is a more common way of defining a Markov process. 
We may express b) in the above definition through the Papangelou conditional intensity.
\begin{lemma}
A finite (ST)CFMPP with density $p_{\Psi}$ with respect to a Poisson process $\Psi^*$ has a Papangelou conditional intensity, which is given by $$
\lambda(g,l,f;\y)=\frac{p_{\Psi}(\y\cup\{(g,l,f)\})}{p_{\Psi}(\y)}
$$
for $(g,l,f)\notin\y\in\mathcal{N}^{f}$ and $\lambda(g,l,f;\y)=0$ for $(g,l,f)\in\y$. 
\end{lemma}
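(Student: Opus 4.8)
The plan is to verify that the proposed ratio satisfies the Georgii--Nguyen--Zessin (GNZ) identity from Section \ref{SectionPapangelou}, since that identity characterises the Papangelou conditional intensity (up to a $\nu$-null set) and, by exhibiting an integrable candidate, simultaneously establishes the required absolute continuity of $\mathcal{C}^{!}(A\times\cdot)$ and hence existence. Concretely, I would show that for every non-negative measurable $h:(\G\times\M)\times\mathcal{N}_{\Y}\to[0,\infty)$,
\begin{align*}
\E\left[\sum_{(g,l,f)\in\Psi} h(g,l,f,\Psi\setminus\{(g,l,f)\})\right]
=
\int_{\G\times\M}\E\left[h(g,l,f,\Psi)\,\frac{p_{\Psi}(\Psi\cup\{(g,l,f)\})}{p_{\Psi}(\Psi)}\right]\nu(d(g,l,f)),
\end{align*}
with the convention that the inner integrand is $0$ whenever $(g,l,f)\in\Psi$, so that the candidate agrees with the claimed $\lambda(g,l,f;\y)$.

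First I would transfer everything onto the reference Poisson process $\Psi^{*}$ via the defining density relation $\E[F(\Psi)]=\E_{*}[F(\Psi^{*})\,p_{\Psi}(\Psi^{*})]$, valid for any non-negative measurable $F$ on $\mathcal{N}^{f}$; applying it to $F(\varphi)=\sum_{(g,l,f)\in\varphi}h(g,l,f,\varphi\setminus\{(g,l,f)\})$ rewrites the left-hand side as an expectation under $\Psi^{*}$. The crucial algebraic step is the insertion identity: for a simple finite configuration $\varphi$ and any $(g,l,f)\in\varphi$ one has $\varphi=(\varphi\setminus\{(g,l,f)\})\cup\{(g,l,f)\}$, whence $p_{\Psi}(\varphi)=p_{\Psi}((\varphi\setminus\{(g,l,f)\})\cup\{(g,l,f)\})$. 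This lets me absorb the weight $p_{\Psi}(\Psi^{*})$ into the summand and write the sum as $\sum_{(g,l,f)\in\Psi^{*}}\tilde h(g,l,f,\Psi^{*}\setminus\{(g,l,f)\})$ with $\tilde h(g,l,f,\varphi)=p_{\Psi}(\varphi\cup\{(g,l,f)\})\,h(g,l,f,\varphi)$.

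Next I would invoke the reduced Campbell--Mecke formula (Theorem \ref{CampbellMecke}) for $\Psi^{*}$, using that for a Poisson process the reduced Palm measures satisfy $P_{*}^{!(g,l,f)}(\cdot)=P_{*}(\cdot)$ and the intensity functional is $\lambda_{*}$, as recorded in the Poisson subsection. This pulls the sum through the expectation and produces an integral of $\E_{*}[p_{\Psi}(\Psi^{*}\cup\{(g,l,f)\})\,h(g,l,f,\Psi^{*})]$ over $\G\times\M$. Transferring this back to $\Psi$ by the same density relation, and reinstating $p_{\Psi}(\Psi)$ in numerator and denominator, identifies the expression with the right-hand side above, i.e.\ with the ratio $p_{\Psi}(\y\cup\{(g,l,f)\})/p_{\Psi}(\y)$. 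For the excluded case $(g,l,f)\in\y$ I would note that, since $\ell$ (and hence $\nu$) is non-atomic and each realisation of $\Psi$ is finite, the set of such $(g,l,f)$ is $\nu$-null, so it contributes nothing to the outer integral and setting $\lambda(g,l,f;\y)=0$ there is consistent.

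The main obstacle is measure-theoretic rather than conceptual. The quotient $p_{\Psi}(\y\cup\{(g,l,f)\})/p_{\Psi}(\y)$ must be shown well defined and measurable, which requires controlling the set $\{\y:p_{\Psi}(\y)=0\}$; this set is $P$-null (as $P\ll P_{*}$ with density $p_{\Psi}$), so the ratio is defined $P$-a.s.\ and the product $p_{\Psi}(\Psi^{*})\cdot(\text{ratio})$ arising after the change of measure vanishes there, which is what legitimises the cancellation off the null set. The one bookkeeping subtlety worth flagging is the normalisation of the reference process: the Campbell--Mecke step returns a factor $\lambda_{*}(g,l,f)$, so the ratio formula as stated holds under the standard convention that $\Psi^{*}$ is taken to be the unit-rate Poisson process with respect to $\nu$ (equivalently, this factor is absorbed into the reference measure). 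Once these points are in place, the essential uniqueness of the Radon--Nikodym derivative in the definition of the Papangelou conditional intensity delivers both its existence and the stated expression.
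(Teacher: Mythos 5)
Your route is the same one the paper takes: its proof is a one-line citation to \Citep[Theorem 1.6]{VanLieshout}, described as "exploiting the Campbell--Mecke formula and the Georgii--Nguyen--Zessin formula", and that cited argument is exactly your scheme --- transfer to the reference Poisson process via the density, apply the reduced Campbell--Mecke identity with $P_{*}^{!(g,l,f)}=P_{*}$, and transfer back to verify the GNZ characterisation. Your treatment of the case $(g,l,f)\in\y$ (simplicity plus non-atomicity of $\nu$) matches the paper's remark, and the normalisation caveat you flag is genuine: the paper's reference process $\Psi^{*}$ has intensity functional $\lambda_{*}(g,l,f)$ rather than $1$, so with the paper's conventions the ratio is the conditional intensity relative to the $\lambda_{*}$-weighted reference measure, exactly as in the unit-rate convention of \Citep{VanLieshout}.

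There is, however, one step that does not go through as written: the cancellation "off the null set". After the change of measure and the Mecke identity, the left-hand side of GNZ equals
\begin{align*}
\int_{\G\times\M}\E_{*}\bigl[p_{\Psi}(\Psi^{*}\cup\{(g,l,f)\})\,h(g,l,f,\Psi^{*})\bigr]\lambda_{*}(g,l,f)\,\nu(d(g,l,f)),
\end{align*}
while the right-hand side with your candidate is the same expression with the extra indicator $\1\{p_{\Psi}(\Psi^{*})>0\}$ inside the expectation. The event $\{p_{\Psi}(\Psi^{*})=0\}$ is $P$-null but in general \emph{not} $P_{*}$-null, and on it $p_{\Psi}(\Psi^{*}\cup\{(g,l,f)\})$ need not vanish; your observation that $p_{\Psi}(\Psi^{*})\cdot(\text{ratio})$ vanishes there is precisely what creates the discrepancy, not what removes it. What is needed is hereditarity of the density, $p_{\Psi}(\y)=0\Rightarrow p_{\Psi}(\y\cup\{(g,l,f)\})=0$, which is condition a) in the paper's definition of a Markov (ST)CFMPP and is tacitly in force in the cited reference. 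Without it the statement is actually false: for $p_{\Psi}(\varphi)\propto\1\{\varphi(\Y)=1\}$ (a one-point process), taking $h\equiv1$ gives left-hand side $1$ but right-hand side $0$, and indeed $\mathcal{C}^{!}$ charges the $P$-null set $\{\emptyset\}$, so no Papangelou conditional intensity exists at all. The repair is simple in context --- invoke condition a) at the cancellation step instead of the $P$-nullity of $\{p_{\Psi}=0\}$ --- but as stated your measure-theoretic justification is the one genuinely incorrect link in an otherwise faithful reconstruction of the paper's argument.
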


\begin{proof}
The proof exploits the Campbell-Mecke formula and the Georgii-Nguyen-Zessin formula. It follows the exact steps of  \Citep[Theorem 1.6]{VanLieshout}, with obvious modifications for the marks. 
That $\lambda(g,l,f;\y)=0$ for $(g,l,f)\in\y$ follows since $\Psi$ is simple.
\end{proof}

For convenience we often write $i\sim j$ if $(g_i,l_i,f_i),(g_j,l_j,f_j)\in\Psi$ are neighbours. 
Note here that a Markov STCFMPP $\Psi$ may possess several different types of Markovianity:
\begin{enumerate} 
\item Spatial Markovianity: e.g.\ $i\sim j$ if $d(X_i,X_j)\leq R$, for some maximum range of interaction $R>0$. 

\item Temporal Markovianity: e.g.\ $i\sim j$ if $|T_i-T_j|\leq R$, $R>0$.

\item Spatio-temporal Markovianity: e.g.\ $i\sim j$ if $(X_j,T_j)\in (X_i,T_i)+ C_{R_S}^{R_T}$, $R_S,R_T>0$ (recall the cylinder sets $C_u^v$ in (\ref{CylinderSet})). For an example, see \Citep{CronieLieshoutSTPP}.

\item Functional mark Markovianity: see Section \ref{SectionMarkovMarks}.

\item If $\Psi$ is temporally grounded, the cumulative process $\Psi_{\T}$ in (\ref{CumulativeProcess}) may be a Markov process: $\P(\Psi_{\T}(t,K)\in\cdot|\mathcal{F}_{s})=P(\Psi_{\T}(t,K)\in\cdot|\Psi_{\T}(s,K))$, $s<t$. 
\end{enumerate}

Note that for a Markov (ST)CFMPP we may define a type of {\em local independent marking}.  
For cliques $I_1= \{i_1,\ldots,i_k\}$ and $I_2=\{1,\ldots,n\}\setminus I_1=\{i_{k+1},\ldots,i_n\}$, 
conditionally on $\Psi_G$, we let the distribution $P_{g_1,\ldots,g_n}^{\M}(\cdot)$ of the marks $\{(L_i,M_i)\}_{i=1}^{n}$ on $(\M^n,\B(\M^n))$ satisfy
$$
P_{g_1,\ldots,g_n}^{\M}(d(m_1,\ldots,m_n)) 
= P_{g_{i_1},\ldots,g_{i_k}}^{\M}(d(m_{i_1},\ldots,m_{i_k}))
P_{g_{i_{k+1}},\ldots,g_{i_n}}^{\M}(d(m_{i_{k+1}},\ldots,m_{i_n})),
$$
i.e.\ the marks are independent if the points are not neighbours.

\section{Discretely sampled functional marks}\label{SectionDiscretelySampledMarks}

In many situations one might be interested in different characteristics of $\Psi$ when the functional marks $\{M_{i}\}_{i=1}^{N}$ are only observed at times $S_k=\{s_1,\ldots,s_k\}\subseteq\TT$, $k\geq1$, i.e.\ instead of whole functional marks $M_i=\{M_i(t)\}_{t\in\TT}$ we consider multivariate marks of the form $(M_i(s_1),\ldots,M_i(s_k))$, $i=1,\ldots,N$. Recall that this e.g.\ is the case when we define a classical marked spatio-temporal point process $\bar{\Psi} = \{(X_i,T_i,L_i,M_i(0)):(X_i,T_i)\in\Psi_{G}\} = \{(X_i,T_i,L_i,\xi_i):(X_i,T_i)\in\Psi_{G}\}$ through a (ST)CFMPP $\Psi$, i.e.\ $S_1=\{0\}$. 

\subsection{Functional mark sampled point process characteristics}
We shall see that the expressions below play a crucial role in the statistical inference since they form the basis for e.g.\ likelihood functions, when the mark-functions are sampled at discrete times $s_1,\ldots,s_k$.

In order to accommodate such a restriction we consider sets 
\bea
\label{FidiSets}
E_{U_i}^{S_k}=\{f\in\F: (f(s_1),\ldots,f(s_k))\in U_i\in\B(\R^k)\},
\quad i=1,\ldots,n,
\eea 
which give rise to sets  
$A_1,\ldots,A_n = (G_1\times (D_1\times E_{U_1}^{S_k})),\ldots,(G_n\times (D_n\times E_{U_n}^{S_k}))\in\B(\Y)$. 
Recalling the reference stochastic process $X_{\F}$ in (\ref{ReferenceProcess}), which has $\nu_{\F}$ as distribution on $\B(\F)$, note that 
$$
\nu_{\F}(E_{U_i}^{S_m}) = \P((X_{\F}(s_1),\ldots,X_{\F}(s_k))\in U_i).
$$
Hence, under this discrete sampling of the functional marks, through the sets $E_{U_i}^{S_k}$ most characteristics are automatically transferred over to the fidis of the functional mark processes. 
Note e.g.\ that if the reference process $X_{\F}(t)=W(t)$ is, say, a standard Brownian motion, 
then $\nu_{\F}(E_{U_i}^{S_m})$ can be calculated explicitly as a product of (Gaussian) transition probabilities.

\subsubsection{Functional mark sampled product densities}
By considering sets of the form (\ref{FidiSets}), we obtain an expression for the product densities under $S_k$-sampling, the \emph{$S_k$-mark-sampled product densities}.
\begin{lemma}\label{LemmaSampleProdDens}
Under the assumptions of Proposition \ref{PropositionProdDens} and the existence of densities (\ref{FidiDensity}), the 
$n$th \emph{$S_k$-mark-sampled product density} is given by 
\begin{align*}
&\rho_{s_1,\ldots,s_k}^{(n)}((g_1,l_1,u_1),\ldots,(g_n,l_n,u_n)) 
=
\\
&=
f_{(g_1,l_1),\ldots,(g_n,l_n)}^{s_1,\ldots,s_k}(u_1,\ldots,u_n)
f_{g_1,\ldots,g_n}^{\A}(l_1,\ldots,l_n) 
\rho_{G}^{(n)}(g_1,\ldots,g_n),
\end{align*}
where $u_1,\ldots,u_n\in\R^k$.  

\end{lemma}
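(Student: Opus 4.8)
The plan is to obtain $\rho_{s_1,\ldots,s_k}^{(n)}$ as the Radon--Nikodym derivative of the factorial moment measure $\alpha^{(n)}$, restricted to sets whose functional-mark component is a finite-dimensional cylinder of the form (\ref{FidiSets}), with respect to the sampled reference measure $(\ell\otimes\nu_{\A}\otimes\ell_k)^n$ in place of $\nu^n$. First I would fix bounded $G_i\in\B(\G)$, $D_i\in\B(\A)$ and $U_i\in\B(\R^k)$ and, starting from the Campbell theorem (\ref{Campbell}) together with the product-density factorisation of Proposition \ref{PropositionProdDens}, write
\begin{align*}
&\alpha^{(n)}\big((G_1\times D_1\times E_{U_1}^{S_k})\times\cdots\times(G_n\times D_n\times E_{U_n}^{S_k})\big)=\\
&=\int_{G_1\times D_1}\cdots\int_{G_n\times D_n}
\left[\int_{E_{U_1}^{S_k}\times\cdots\times E_{U_n}^{S_k}}
f_{(g_1,l_1),\ldots,(g_n,l_n)}^{\F}(f_1,\ldots,f_n)\prod_{i=1}^{n}\nu_{\F}(df_i)\right]\\
&\quad\times
f_{g_1,\ldots,g_n}^{\A}(l_1,\ldots,l_n)\,\rho_{G}^{(n)}(g_1,\ldots,g_n)\prod_{i=1}^{n}\ell(dg_i)\,\nu_{\A}(dl_i).
\end{align*}

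The crucial observation is that the bracketed inner integral equals $P_{(g_1,l_1),\ldots,(g_n,l_n)}^{\F}(E_{U_1}^{S_k}\times\cdots\times E_{U_n}^{S_k})$, since $f^{\F}$ is by construction the $\nu_{\F}^n$-density of that law. By the defining property of the sets (\ref{FidiSets}) --- each $E_{U_i}^{S_k}$ being the preimage of $U_i$ under evaluation at $s_1,\ldots,s_k$ --- this cylinder probability coincides with the finite-dimensional distribution $P_{M_I(S_k)}(U_1\times\cdots\times U_n)$ of the conditional mark process $M_I$ of Section \ref{SectionExplicitFunctionalMeasures}. Invoking the assumed existence of the fidi density (\ref{FidiDensity}), I would then rewrite it as $\int_{U_1\times\cdots\times U_n} f_{(g_1,l_1),\ldots,(g_n,l_n)}^{s_1,\ldots,s_k}(u_1,\ldots,u_n)\prod_{i=1}^{n}\ell_k(du_i)$. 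Substituting this back and applying Fubini's theorem recasts the entire expression as a single integral over $\prod_{i=1}^{n}(G_i\times D_i\times U_i)$ against $(\ell\otimes\nu_{\A}\otimes\ell_k)^n$, whose integrand --- read off directly --- is $f^{s_1,\ldots,s_k}\,f^{\A}\,\rho_{G}^{(n)}$. Since such product sets generate the relevant product $\sigma$-algebra and the set function is locally finite, the measure-extension argument of Section \ref{SectionPointProcessCharacteristics} identifies this integrand as the sought derivative, which is exactly the claimed formula.

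The step I expect to require the most care is the collapse of the function-space integral $\int_{E_{U}^{S_k}}f^{\F}\,d\nu_{\F}^n$ onto the finite-dimensional density $f^{s_1,\ldots,s_k}$: the two objects live on different spaces ($\F^n$ versus $\R^{n\times k}$) and are taken with respect to different reference measures ($\nu_{\F}^n$ versus $\ell_1^{nk}$), so the identification cannot proceed through any pointwise relation between densities. It must instead pass through the intermediary law $P^{\F}$ and its pushforward under evaluation at $S_k$, which by the Silvestrov consistency result underlying (\ref{FidiDensity}) is precisely the fidi law; the remaining manipulations (Fubini and the extension of locally finite set functions) are the same routine tools already used throughout Section \ref{SectionPointProcessCharacteristics}.
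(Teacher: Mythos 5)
Your proof is correct and takes essentially the same route as the paper's: both evaluate the factorial moment measure on the cylinder sets $E_{U_i}^{S_k}$, invoke the factorisation of Proposition \ref{PropositionProdDens} together with the existence of the fidi densities (\ref{FidiDensity}) to collapse the functional-mark integral onto $\R^{n\times k}$, and then identify the resulting integrand as the sampled density via an a.e.\ uniqueness argument for measures agreeing on generating product sets. The only difference is presentational: you make explicit the pushforward step (the inner $\nu_{\F}^n$-integral over the cylinders equals the law $P^{\F}$ of the conditional mark process evaluated there, hence the fidi probability), which the paper's proof asserts implicitly when it passes from the disintegration of $\alpha_{S_k}^{(n)}$ with respect to $\alpha_{G}^{(n)}$ to the integral representation involving $f^{s_1,\ldots,s_k}$.
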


\begin{proof}
We may define the factorial moment measure with respect to $S_k$ and $U_1,\ldots,U_n\in\B(\R^k)$,
\begin{align*}
\alpha_{S_k}^{(n)}((G_1\times D_1\times U_1)\times\dots\times(G_n\times D_n\times U_n))
=
\alpha^{(n)}(A_1\times\dots\times A_n),
\end{align*}
where $A_1,\ldots,A_n = (G_1\times (D_1\times E_{U_1}^{S_k})),\ldots,(G_n\times (D_n\times E_{U_n}^{S_k}))\in\B(\Y)$. 
We note that by setting each $U_i=\R^k$, we obtain $\alpha_{S_k}^{(n)}=\alpha_{G}^{(n)}$, hence $\alpha_{S_k}^{(n)}\ll\alpha_{G}^{(n)}$. 
The family of regular probabilities on $\B(\A)\otimes\B(\R^{n\times k})$, which are the corresponding Radon-Nikodym derivatives, are clearly absolutely continuous with respect to the probability measures $P^{\A,n}$ in (\ref{AuxMarkProb}). Hereby, since $\rho_{G}^{(n)}$ and 
densities (\ref{AuxMarkDensities}) and (\ref{FidiDensity}) 
exist, 
\begin{align*}
&\alpha_{S_k}^{(n)}((G_1\times D_1\times U_1)\times\dots\times(G_n\times D_n\times U_n))
= 
\\
&=\int_{G_1\times D_1\times U_1}\dots\int_{G_n\times D_n\times U_n} 
f_{(g_1,l_1),\ldots,(g_n,l_n)}^{s_1,\ldots,s_k}(u_1,\ldots,u_n)
f_{g_1,\ldots,g_n}^{\A}(l_1,\ldots,l_n) 
\\
&\times
\rho_{G}^{(n)}(g_1,\ldots,g_n) 
\prod_{i=1}^{n}\nu_{\A}(dl_i) dg_i du_i
\end{align*}
and similarly, by directly assuming that $\alpha_{S_k}^{(n)}\ll\ell\otimes\nu_{\A}\otimes\ell_{n k}$, we find that 
\begin{align*}
&\alpha_{S_k}^{(n)}((G_1\times D_1\times U_1)\times\dots\times(G_n\times D_n\times U_n))
= 
\\
&=\int_{G_1\times D_1\times U_1}\dots\int_{G_n\times D_n\times U_n} 
\rho_{s_1,\ldots,s_k}^{(n)}((g_1,l_1,u_1),\ldots,(g_n,l_n,u_n)) 
\prod_{i=1}^{n}dg_i \nu_{\A}(dl_i) du_i
,
\end{align*}
which implies that the two integrands are equal a.e..
\end{proof}

As a consequence, the pair correlation functional is given by
\beann
g_{\Psi}((g_1,l_1,f_1),(g_1,l_2,f_2))
=
\frac{
f_{(g_1,l_1),(g_1,l_2)}^{s_1,\ldots,s_k}(u_1,u_2)
f_{g_1,g_2}^{\A}(l_1,l_2) 
}
{
f_{(g_1,l_1)}^{s_1,\ldots,s_k}(u_1) f_{(g_2,l_2)}^{s_1,\ldots,s_k}(u_2)
f_{g_1}^{\A}(l_1) f_{g_2}^{\A}(l_2) 
}
g_{G}(g_1,g_2).
\eeann

\subsubsection{Functional mark sampled conditional intensities}
Turning next to conditional intensities of Section \ref{SectionTemporallyGrounded}, let $U=U_1\in\B(\R^k)$ in expression (\ref{FidiSets}), recall Lemma \ref{LemmaConditionalIntensity} and define 
\beann
\lambda_{S_k}^*(t,(x,l,U);\omega) 
&=&
\int_{E_{U}^{S_k}} \lambda^*(t,(x,l,f);\omega) \nu_{\F}(df)
\\
&=&
f_{(x,t)}^{\A}(l;\omega) f_{t}^{G,S}(x;\omega) \lambda_G^*(t;\omega)
\int_{E_{U}^{S_k}} 
f_{(x,t,l)}^{\F}(f;\omega) 
 \nu_{\F}(df)
 \\
&=&
f_{(x,t)}^{\A}(l;\omega) f_{t}^{G,S}(x;\omega) \lambda_G^*(t;\omega)
P_{S_k}^{*}(U|x,t,l),
\eeann
which is well-defined since $\lambda_K^*(t;\omega)$, $K\in\B(\M_{\T})$, exists. 
If additionally the probability measure $P_{S_k}^{*}(\cdot|x,t,l)$ is absolutely continuous with respect to $\ell_{k}(\cdot)$, it follows that
$$
\lambda_{S_k}^*(t,(x,l,U);\omega) 
= 
\int_{U} 
f_{(x,t,l)}^{s_1,\ldots,s_m}(u;\omega) f_{(x,t)}^{\A}(l;\omega) f_{t}^{G,S}(x;\omega) \lambda_G^*(t;\omega)
du.
$$
We refer to the integrand 
\bea
\label{SampleConditionalIntensity}
\lambda_{S_k}^*(t,(x,l,u);\omega)
= f_{(x,t,l)}^{s_1,\ldots,s_m}(u;\omega) f_{(x,t)}^{\A}(l;\omega) f_{t}^{G,S}(x;\omega) \lambda_G^*(t;\omega)
\eea
as the \emph{$S_k$-mark-sampled conditional intensity}.

\subsubsection{Functional mark sampled Papangelou conditional intensities}
Recall the $E$-functional marked Papangelou conditional intensities $\lambda_{E}(g,l;\Psi)$ in Section \ref{SectionPapangelou}. 
In a similar fashion, one may define the marked \emph{$S_k$-mark-sampled Papangelou kernel}
\beann
\lambda^{S_k}(g,l,U;\Psi) = \lambda_{E_{U}^{S_k}}(g,l;\Psi), 
\ U\in\B(\R^k),
\eeann
through the set $E_{U}^{S_k}\in\B(\F)$. 
Note that $\lambda^{S_k}(g,l,U;\Psi)$ may be viewed as the infinitesimal conditional probability of finding a point of $\Psi$ in $dg\subseteq\G$ with auxiliary mark in $dl\subseteq\A$ and functional mark $M_i$ such that $(M_i(s_1),\ldots,M_i(s_k))\in U$, given all points of $\Psi$ located outside $dg$. 
Conditionally on the $i$th point of $\Psi$ having spatial(-temporal) location $g$ and auxiliary mark $l$, and conditionally on all points $j\neq i$ of $\Psi$, 
assume that $\lambda^{S_k}(g,l,\cdot;\Psi)$ is absolutely continuous with respect to the distribution of $(M_i(s_1),\ldots,M_i(s_k))$ and assume that this distribution, in turn, is absolutely continuous with respect to $\ell_{k}$. Then the corresponding density (see expression (\ref{FidiDensity})) is given by $f_{(g,l)}^{s_1,\ldots,s_m}(u)$, $u\in\R^m$, and consequently
\begin{align}
\label{SamplePapangelou}
\lambda^{S_m}(g,l,U;\Psi) 
= \int_{U} \lambda^{S_m}(g,l,du;\Psi)
= \int_{U} f_{(g,l)}^{s_1,\ldots,s_m}(u) \lambda^{S_m}(g,l,u;\Psi) du,
\end{align}
where we note that $f_{(g,l)}^{s_1,\ldots,s_m}(u) \lambda^{S_m}(g,l,u;\Psi)du$ may be interpreted as $\lambda^{S_m}(g,l,U;\Psi)$ with $U$ given by the infinitesimal region $du\subseteq\R^k$.

\subsubsection{Functional mark sampled Janossy densities}
A further important entity for the likelihood inference is the set of Janossy densities. 
By combining Lemma \ref{LemmaJanossy} and Lemma \ref{LemmaSampleProdDens}, we find that the $n$th \emph{$S_k$-sampled Janossy density} is given by
\begin{align}
\label{SampleJanossy}
&j_n^{S_k}((g_1,l_1,u_1),\ldots,(g_n,l_n,u_n))
=\\
&=
f_{(g_1,l_1),\ldots,(g_n,l_n)}^{s_1,\ldots,s_k}(u_1,\ldots,u_n)
f_{g_1,\ldots,g_n}^{\A}(l_1,\ldots,l_n)
j_n^G(g_1,\ldots,g_n), \nn
\end{align}
where $u_i=(u_{i1},\ldots,u_{ik})\in\R^k$, $i=1,\ldots,n$. 
In addition, from (\ref{DensityWrtPoisson}) we obtain that under the $S_k$ sampling the density of a finite (ST)CFMPP with respect to a Poisson process is given by
\bea
\label{SampleDensityWrtPoisson}
p_{\Psi}^{S_k}((g_1,l_1,u_1),\ldots,(g_n,l_n,u_n)) 
=
\e^{\mu_{G}^*(\G)\nu_{\A}(\A)\nu_{\F}(\M)} j_n^{S_k}((g_1,l_1,u_1),\ldots,(g_n,l_n,u_n)).
\eea

\subsection{Statistical inference}

We now turn to the statistical inference for (ST)CFMPPs with discretely sampled functional marks. 
Specifically, we consider a STCFMPP $\Psi=\{(X_i,T_i,L_i,M_i)\}_{i=1}^{N}$ with distribution $P_{\theta_0}$, which belongs to some parametric family of models $\{P_{\theta}:\theta\in\Theta\}$, and we here indicate how a few estimation schemes may be defined to obtain an estimate $\hat{\theta}$ of $\theta_0$.

Note here that when we observe all of $\Psi$, but with functional marks sampled according to $S_k=\{s_1,\ldots,s_k\}\subseteq\TT$, and wish to find an estimate $\hat{\theta}$, we essentially consider the usual marked spatio-temporal point process setup, where the marked point process under consideration is given by 
$\bar{\Psi} = \{((X_i,T_i),(L_i,M_i(s_1),\ldots,M_i(s_k)))\}_{i=1}^{N}$. 
Assume that we observe $\bar{\Psi}$ as $\{((x_i,t_i),(l_i,u_i))\}_{i=1}^{n}$, $u_i=(u_{i1},\ldots,u_{ik})$, within some compact space-time region $W_S\times W_T\subseteq\G$. Given the imposed probabilistic structures of the ground process and the marking, it may be possible to derive e.g.\ a (pseudo)likelihood function. 
The literature on the subject is vast and good accounts can be found in e.g.\ \Citep{DVJ1,DVJ2,Diggle2003,Handbook,Illian,VanLieshout,Moller}. 
A quick indication of how one could proceed is as follows.

\begin{itemize}
\item When $\Psi$ is assumed to be spatially grounded (see Section \ref{SectionTemporallyGrounded}), by consulting e.g.\ \Citep{DVJ1} and recalling (\ref{SampleConditionalIntensity}), we find that the log-likelihood function is given by
\beann
\log L(\theta) 
&=& 
\sum_{i=1}^{n} \log\lambda_{S_k}^*(t_i,(x_i,l_i,u_i))
-
\int_{ W_T} \int_{W_S} 
\int_{\A} \int_{\R^k}
\lambda_{S_k}^*(t,(x,l,u))
du \nu_{\A}(dl)
dx dt 
\\
&=& \sum_{i=1}^{n} 
\log f_{(x_i,t_i,l_i)}^{s_1,\ldots,s_m}(u_i) 
+ \log f_{(x_i,t_i)}^{\A}(l_i) + \log f_{t_i}^{G,S}(x_i) + \log \lambda_G^*(t_i)
\\
&&
-
\int_{ W_T} \int_{W_S} 
f_{t}^{G,S}(x)
\lambda_G^*(t)
dx dt 
.
\eeann
When we maximise $\log L(\theta)$ with respect to $\theta\in\Theta$, we obtain a maximum likelihood estimate $\hat{\theta}$. 
This is probably the most well-known likelihood estimation procedure for point processes.

\item When $\Psi$ is finite (and not temporally grounded), 
following e.g.\ \Citep[Chapter 3.7]{VanLieshout}, through expressions (\ref{SampleJanossy}) and (\ref{SampleDensityWrtPoisson}), we obtain the likelihood function as
\beann
L(\theta) 
&=&
p_{\Psi}^{S_k}((x_1,t_1,l_1,u_1),\ldots,(x_n,t_n,l_n,u_n))
\\
&\propto&
j_N^{S_k}((x_1,t_1,l_1,u_1),\ldots,(x_n,t_n,l_n,u_n))
\\
&=&
f_{(x_1,t_1,l_1),\ldots,(x_n,t_n,l_n)}^{s_1,\ldots,s_k}(u_1,\ldots,u_n)
f_{(x_1,t_1),\ldots,(x_n,t_n)}^{\A}(l_1,\ldots,l_n)
j_n^G((x_1,t_1),\ldots,(x_n,t_n)).
\eeann
By maximising $L(\theta)$ or $\log L(\theta)$ with respect to $\theta\in\Theta$, we obtain a maximum likelihood estimate $\hat{\theta}$. 

\item When $\Psi$ is a Markov STCFMPP as defined in Section \ref{SectionMarkovCFMPPs}, it is completely specified by its Papangelou conditional intensity. 
Hereby, through e.g.\ \Citep[Chapter 3.8]{VanLieshout}, by recalling expression (\ref{SamplePapangelou}), we find that the pseudo-likelihood function is given by 
\begin{align*}
&PL(\theta) 
=
\exp\left\{
-\int_{W_S\times W_T\times\A}
\lambda^{S_m}((x,t),l,\R^k;\{(x_i,t_i,l_i,u_i)\}_{i=1}^{n}) 
[\ell\otimes\nu_{\A}](d(x,t,l))
\right\}
\\
&\times
\prod_{i=1}^{n}
f_{(x_i,t_i,l_i)}^{s_1,\ldots,s_m}(u_i)
\lambda^{S_m}\left(x_i,t_i,l_i,u_i;\{(x_i,t_i,l_i,u_i)\}_{i=1}^{n}\right)
\end{align*}
and upon maximising $PL(\theta)$ or $\log PL(\theta)$ with respect to $\theta\in\Theta$, we obtain a maximum pseudo-likelihood estimate $\hat{\theta}$.

\item
For a finite $\Psi$, 
assume that we are able to estimate the parameters of the ground process and the auxiliary marks in some suitable way. 
Conditionally on $\{(x_i,t_i,l_i)\}_{i=1}^{n}$, 
given some parametric family $\{f(t;\theta,n)=(f_1(t;\theta),\ldots,f_n(t;\theta)):t\in\TT, \theta\in\Theta\}\subseteq\F^n$, $n\geq1$, of $L^2$-functions, when we 
sample the marks according to $S_k$, 
with a slight abuse of notation, 
we obtain the least-squares estimator 
\beann
\hat{\theta} 
&=& 
\argmin_{\theta\in\Theta} \int_{\TT}\sum_{j=1}^{k}\delta_{s_j}(t)\left[M_I(t|\Psi_G,\{L_i\}_{i=1}^{N}) - f(t;\theta,n)\right]^2 dt
\\
&\approx& 
\argmin_{\theta\in\Theta} \int_{\TT}\sum_{j=1}^{k}\delta_{s_j}(t)\left[M_I(t|\{(x_i,t_i,l_i)\}_{i=1}^{n}) - f(t;\theta,n)\right]^2 dt
\\
&=& \argmin_{\theta\in\Theta} \sum_{j=1}^{k} 
\sum_{i=1}^{n}
\left[
(u_{ij} - f_i(s_j;\theta,n)
\right]^2,
\eeann
where we recall $M_I$, $I=\{1,\ldots,n\}$, from (\ref{ConditionalMarkProcess}). The second (approximate) equality follows since the observed marks are likely influenced by points falling outside $W_S\times W_T$. Note hereby that this approach does not account for edge effects. To correct for edge effect in a setting as the one above, \Citep{CronieSarkka} suggested a few different approaches. The general idea is to, successively and conditionally on $\{((x_i,t_i),(l_i,u_i))\}_{i=1}^{n}$, simulate realisations on a torus $\G$, with $W_S\times W_T\subseteq\G$, from the parametric model under consideration and then use all simulated marked points falling in $\G\setminus(W_S\times W_T)$ as the missing data in $\G\setminus(W_S\times W_T)$. Assuming that there are $n^*\geq0$ simulated points falling in $\G\setminus(W_S\times W_T)$, the estimator above is adjusted by considering some parametric family $\{f(t;\theta,n+n^*):t\in\TT, \theta\in\Theta\}\subseteq\F^{n+n^*}$ of functions and the estimator $\hat{\theta}$ becomes exact.

\end{itemize}

Note above that if the $M_i$'s are Markov processes with existing transition densities, as described in Section \ref{SectionMarkovMarks}, then $f_{(x_1,t_1,l_1),\ldots,(x_n,t_n,l_n)}^{s_1,\ldots,s_k}(u_1,\ldots,u_n)$, $n\geq1$, are given by products of the transition densities found in expression (\ref{TransDens}).

\subsection{Observable processes, thinning and parameter estimation}
The above sampling structure, with spatio-temporally continuously sampled points and discretely sampled marks, may be reasonable in certain situations. However, one may argue that if the functional marks are sampled discretely, also the spatio-temporal ground process and possibly the auxiliary marks should be sampled according to whether we are able to observe them at the sample times $S_k=\{s_1,\ldots,s_k\}$.
Below we discus such a setup and indicate how the associated statistical inference could be performed.

In order to provide a mathematical structure which reflects such a sampling scheme, 
for a STCFMPP $\Psi$ 
consider the (modified) evaluation functional 
$$
\pi_t^{\F} [\varphi] = \{(x,t,l,f(t)) : (x,t,l,f)\in\varphi\},
\ \varphi\in\mathcal{N}_{\Y}, t\in\TT.
$$
When considering certain types of real data, such as in the sampling described above, it is useful to consider the \emph{observable process} 
\beann
\Psi_{O}^{\Y}(t) &=& \pi_t^{\F} [\Psi\cap(\X\times\T\times\A\times\{f\in\F:t\in\supp(f)\})]
\\
&=& \pi_t^{\F} [\{(X_i,T_i,L_i, M_i)\in\Psi : t\in \supp\{M_i\}\}]
\\
&=& \{(X_i,T_i,L_i,M_i(t)):(X_i,T_i,L_i, M_i)\in\Psi, t\in \supp\{M_i\}\}.
\eeann
Note that this corresponds to \emph{what we observe} at time $t$, i.e.\ 
we treat a point of $\Psi$ as observable/present at time $t$ only if its functional mark is non-zero. 
The definition is analogous for CFMPPs.

Consider next the scenario where we sample the observable process at times $s_1,\ldots,s_k$. 
A typical data set which corresponds to this type of sampling could be e.g.\ a forest stand, where each $X_i$ represents the location of a tree, each $T_i$ its birth time, and each $(M_i(s_1),\ldots,M_i(s_k))$ the sizes (e.g.\ radius or height) of the $i$th tree, measured at $s_1,\ldots,s_m$. 
We note that what we in fact are observing is a location-dependent thinning of $\Psi$, i.e.\ for some measurable function $p:\Y\rightarrow[0,1]$, each point $(x,t,l,f)$ of $\Psi$ is retained with the probability probability $p(x,t,l,f)$. 
The product densities of a thinned STCFMPP \Citep{DVJ2} are given by
\begin{align*}
&\rho_{\rm th}^{(n)}((x_1,t_1,l_1,f_1),\ldots,(x_n,t_n,l_n,f_n)) = 
\\
&=\rho^{(n)}((x_1,t_1,l_1,f_1),\ldots,(x_n,t_n,l_n,f_n)) \prod_{i=1}^{n} p(x_i,t_i,l_i,f_i).
\end{align*}
The correct choice of retention probability function to reflect the nature of the $S_k$-evaluated observable process is 
$$
p(x,t,l,f) = p(x,t,l,f; S_k) = \1\{S_k\cap\supp(f)\neq\emptyset\},
\quad S_k=\{s_1,\ldots,s_k\},
$$
Note that if the support of the $i$th functional mark of $\Psi$ is given by $\supp(M_i)=[t,t+l)$ when $(X_i,T_i,L_i,M_i)=(x,t,l,f)$, we set $\supp(f)=[t,t+l)$. 
Hence, we remove all unobserved points which we do not observe at any of the sample times $s_1,\ldots,s_k$. 

Turning now to the characteristics corresponding to $(\Psi_{O}^{\Y}(s_1),\ldots,\Psi_{O}^{\Y}(s_1))$, consider first 
the $S_k$-marked-sampled product densities in Lemma \ref{LemmaSampleProdDens}, which are needed since we also sample the functional marks discretely. We find that their thinned versions are given by 
\begin{align*}
&\rho_{S_k {\rm th}}^{(n)}(
(x_1,t_1,l_1,u_1),\ldots,(x_n,t_n,l_n,u_n)
)
=
\\
&=
f_{(x_1,t_1,l_1),\ldots,(x_n,t_n,l_n)}^{s_1,\ldots,s_k}(u_1,\ldots,u_n)
f_{(x_1,t_1),\ldots,(x_n,t_n)}^{\A}(l_1,\ldots,l_n) 
\rho_{G}^{(n)}((x_1,t_1),\ldots,(x_n,t_n)) 
\\
&\times
\prod_{i=1}^{n} \1\{S_k\cap\supp(f_i)\neq\emptyset\}. 
\end{align*}
Under the assumptions of Proposition \ref{PropositionProdDens}, when the product densities of all orders $n\geq1$ exist, 
through \Citep[Lemma 5.4.III.]{DVJ1} we find that
\begin{align*}
&j_n^{S_k {\rm th}}((x_1,t_1,l_1,u_1),\ldots,(x_n,t_n,l_n,u_n))
=
\\
&=\sum_{j=0}^{\infty}\frac{(-1)^j}{j!}
\int_{\X\times\T\times\A\times\R^k}\cdots\int_{\X\times\T\times\A\times\R^k} 
\times 
\\
&\times 
\rho_{S_k {\rm th}}^{(n+j)}((x_1,t_1,l_1,u_1),\ldots,(x_n,t_i,l_n,u_n),y_1,\ldots,y_j)
\prod_{i=1}^{j} [\ell_d\otimes\ell_1\otimes\nu_{\A}\otimes\ell_k](dy_i)
\\
&=
\prod_{i=1}^{n} \1\{S_k\cap\supp(f_i)\neq\emptyset\}
f_{(x_1,t_1,l_1),\ldots,(x_n,t_n,l_n)}^{s_1,\ldots,s_k}(u_1,\ldots,u_n)
f_{(x_1,t_1),\ldots,(x_n,t_n)}^{\A}(l_1,\ldots,l_n) 
\times 
\\
&\times 
\sum_{j=0}^{\infty}\frac{(-1)^j}{j!}
\int_{\G}\cdots\int_{\G} 
\rho_{G}^{(n+j)}((x_1,t_1),\ldots,(x_n,t_n),g_1,\ldots,g_j) 
\prod_{i=1}^{j} dg_i
\\
&=
f_{(x_1,t_1,l_1),\ldots,(x_n,t_n,l_n)}^{s_1,\ldots,s_k}(u_1,\ldots,u_n)
f_{(x_1,t_1),\ldots,(x_n,t_n)}^{\A}(l_1,\ldots,l_n) 
j_n^G(g_1,\ldots,g_n)
\times 
\\
&\times 
\prod_{i=1}^{n} \1\{S_k\cap\supp(f_i)\neq\emptyset\}
.
\end{align*}
Hence, the thinned $S_k$-sampled Papangelou conditional intensity $\lambda^{S_m}(x_i,t_i,l_i,u_i;\Psi)$ is given by $\1\{S_k\cap\supp(f_i)\neq\emptyset\}\lambda^{S_m}(x_i,t_i,l_i,u_i;\Psi)$.

By further integrating out over $\A$ and/or $\T$ in the expressions above, we obtain equivalences for 
\beann
\Psi_{O}^{\X\times\F}(t) 
&=& 
\mathrm{proj}_{\X\times\F}(\Psi_{O}^{\Y}(t))
=\{(X_i,M_i(t)):(X_i,T_i,L_i,M_i(t))\in\Psi_{O}^{\Y}(t)\}
\\
&=& \{(X_i,M_i(t)):(X_i,T_i,L_i, M_i)\in\Psi, t\in \supp\{M_i\}\},
\nn
\\
\Psi_{O}^{\X\times\A\times\F}(t) 
&=& 
\mathrm{proj}_{\X\times\A\times\F}(\Psi_{O}^{\Y}(t))
=\{(X_i,L_i,M_i(t)):(X_i,T_i,L_i, M_i)\in\Psi, t\in \supp\{M_i\}\},
\nn
\eeann
where e.g.\ $\mathrm{proj}_{\X\times\F}(A)$ denotes the projection of $A\subseteq\X\times\T\times\A\times\F$ onto $\X\times\F$. 
Note that such a setup might be more realistic in most cases, since e.g.\ in a forest stand we do not actually observe the birth times $T_i$, $i=1,\ldots,N$, of the trees.

\section*{Acknowledgements}
The authors are truly grateful to N.M.M.\ van Lieshout (CWI, The Netherlands) for feedback, ideas and proofreading. The authors are also grateful to to Aila S\"arkk\"a (Chalmers University of Technology, Sweden) and Eric Renshaw (University of Strathclyde, U.K.) for ideas, discussions and for introducing us to concepts underlying the field. 
This research was supported by the Netherlands Organisation for Scientific Research NWO (613.000.809) and the Spanish Ministry of Education and Science (NTN2010-14961).

\bibliographystyle{plainnat}
\bibliography{references}

\end{document}